\theoremstyle{plain}
\newtheorem{thm}{Theorem}[section]
\newtheorem{lem}[thm]{Lemma}
\newtheorem{prop}[thm]{Proposition}
\def\@rst #1 #2other{#1}
\newcommand\MR[1]{\relax\ifhmode\unskip\spacefactor3000 \space\fi
  \MRhref{\expandafter\@rst #1 other}{#1}}
\newcommand{\MRhref}[2]{\href{http://www.ams.org/mathscinet-getitem?mr=#1}{MR#2}}
\theoremstyle{definition}
\newtheorem{defn}[thm]{Definition}
\newtheorem{remark}[thm]{Remark}
\numberwithin{equation}{section}
\newcommand{\dsb}{\begin{adjustwidth}{2.5em}{0pt}
\begin{footnotesize}}
\newcommand{\dse}{\end{footnotesize}
\end{adjustwidth}}
\newcommand{\ssb}{\begin{adjustwidth}{2.5em}{0pt}}
\newcommand{\sse}{\end{adjustwidth}}
\newcommand{\aryb}{\begin{eqnarray*}}
\newcommand{\arye}{\end{eqnarray*}}
\def\alb#1\ale{\begin{align*}#1\end{align*}}
\def\allb#1\alle{\begin{align}#1\end{align}}
\newcommand{\eqb}{\begin{equation}}
\newcommand{\eqe}{\end{equation}}
\newcommand{\eqbn}{\begin{equation*}}
\newcommand{\eqen}{\end{equation*}}
\newcommand{\BB}{\mathbbm}
\newcommand{\ol}{\overline}
\newcommand{\op}{\operatorname}
\newcommand{\frk}{\mathfrak}
\newcommand{\eqD}{\overset{d}{=}}
\newcommand{\ep}{\epsilon}
\newcommand{\rta}{\rightarrow}
\newcommand{\wt}{\widetilde}
\newcommand{\wh}{\widehat} 
\newcommand{\mcl}{\mathcal}
\newcommand{\bdy}{\partial}
\let\originalleft\left
\let\originalright\right
\renewcommand{\left}{\mathopen{}\mathclose\bgroup\originalleft}
\renewcommand{\right}{\aftergroup\egroup\originalright}
\title{Random walk on random planar maps:\\ spectral dimension, resistance, and displacement}
\date{  }
\author{
Ewain Gwynne and Jason Miller \\
\textit{University of Cambridge}
}
\begin{document}

\maketitle 

\begin{abstract}
We study simple random walk on the class of random planar maps which can be encoded by a two-dimensional random walk with i.i.d.\ increments or a two-dimensional Brownian motion via a ``mating-of-trees" type bijection. This class includes the uniform infinite planar triangulation (UIPT), the infinite-volume limits of random planar maps weighted by the number of spanning trees, bipolar orientations, or Schnyder woods they admit, and the $\gamma$-mated-CRT map for $\gamma \in (0,2)$.  For each of these maps, we obtain an upper bound for the Green's function on the diagonal, an upper bound for the effective resistance to the boundary of a metric ball, an upper bound for the return probability of the random walk to its starting point after $n$ steps, and a lower bound for the graph-distance displacement of the random walk, all of which are sharp up to polylogarithmic factors.  

When combined with work of Lee (2017), our bound for the return probability shows that the spectral dimension of each of these random planar maps is a.s.\ equal to 2, i.e., the (quenched) probability that the simple random walk returns to its starting point after $2n$ steps is $n^{-1+o_n(1)}$.  Our results also show that the amount of time that it takes a random walk to exit a metric ball is at least its volume (up to a polylogarithmic factor).  In the special case of the UIPT, this implies that random walk typically travels at least $n^{1/4 - o_n(1)}$ units of graph distance in $n$ units of time. The matching upper bound for the displacement is proven by Gwynne and Hutchcroft (2018).  These two works together resolve a conjecture of Benjamini and Curien (2013) in the UIPT case.

Our proofs are based on estimates for the mated-CRT map (which come from its relationship to SLE-decorated Liouville quantum gravity) and a strong coupling of the mated-CRT map with the other random planar map models. 
\end{abstract}

\noindent\textbf{Keywords:} Random planar maps, uniform infinite planar triangulation, spectral dimension, random walk, return probability, Liouville quantum gravity, Schramm-Loewner evolution

\tableofcontents

\section{Introduction}
\label{sec-intro}

\subsection{Overview} 
\label{sec-overview}

A planar map is a graph together with an embedding into the plane so that no two edges cross, viewed modulo orientation preserving homeomorphisms. The study of planar maps has a long history, going back to work of Tutte \cite{tutte} and Mullin \cite{mullin-maps} in the 1960s who worked on the question of enumerating planar maps.  In recent years, there has been a considerable focus on the large scale structure of \emph{random planar maps}.  The simple random walk is one of the most natural processes that one can put on a random planar map and there have been many recent works which study its behavior; see, e.g.,~\cite{benjamini-schramm-topology,benjamini-curien-uipq-walk,gn-recurrence,lee-conformal-growth,lee-uniformizing,gill-rohde-type,abgn-bdy,georg-poisson-bdy,angel-hyperbolic,chn-causal,gms-tutte,gms-harmonic,cg-liouville,curien-marzouk-subdiffusive}. We mention also the work~\cite{bdg-lqg-rw}, which analyzes random walk on the two-dimensional integer lattice with edge weights given by the exponential of a discrete Gaussian free field (GFF).  This model is connected to random planar maps in that they both serve as discretizations of Liouville quantum gravity (LQG), as we will discuss in more detail below.

The purpose of this paper is to resolve several open questions for the simple random walk on a certain family of infinite-volume random planar maps which arise as the Benjamini-Schramm~\cite{benjamini-schramm-topology} local limits of finite random planar maps as the size is sent to $\infty$. This family includes the uniform infinite planar triangulation (UIPT)~\cite{angel-schramm-uipt} of type II (meaning that multiple edges are allowed but self-loops are not), the infinite-volume limits of random planar maps weighted by the number of spanning trees \cite{mullin-maps,bernardi-maps,shef-burger}, bipolar orientations \cite{kmsw-bipolar}, or Schnyder woods \cite{lsw-schnyder-wood} they admit, and the mated-CRT maps, whose definition we review just below.

For each of these random planar map types, we will obtain:
\begin{itemize}
\item The spectral dimension is a.s.\ at least $2$, which when combined with the matching upper bound obtained by Lee~\cite{lee-conformal-growth,lee-uniformizing} shows that the spectral dimension is a.s.\ equal to~$2$ (Theorem~\ref{thm-map-spd}).  That is, the quenched probability that the simple random walk returns to its starting point after $2n$ steps is $n^{-1+o_n(1)}$.   This confirms a long-standing prediction in the physics literature in the setting of random planar maps, see, e.g.,~\cite{abnrw-spec-dim,aajiw-spec-dim}.
\item A lower bound for the displacement exponent of random walk, which gives the conjectured \cite{benjamini-curien-uipq-walk} exponent of $1/4$ on the UIPT (Theorem~\ref{thm-uipt-displacement}).  More generally, we show that the random walk typically travels at least $n^{1/d+ o_n(1)}$ units of graph distance in $n$ units of time, where $d$ is the ball volume growth exponent whose existence is established in~\cite{dg-lqg-dim,dzz-heat-kernel} (Theorem~\ref{thm-map-displacement}).  (The matching upper bound for the displacement exponent is proven in~\cite{gh-displacement}.)
\item A polylogarithmic upper bound for the Green's function on the diagonal and for the effective resistance between the root vertex and the boundary of a metric ball (Theorem~\ref{thm-map-green}).
\end{itemize}
See Section~\ref{sec-main-results} for precise statements.

The family of random planar maps which we consider, except for the mated-CRT maps, are those which can be bijectively encoded by means of a two-sided, two-dimensional random walk with i.i.d.\ increments via a so-called \emph{mating-of-trees} bijection. The increment distribution of the encoding walk depends on the particular model.  Roughly speaking, this bijective encoding involves constructing the two discrete random trees whose contour functions (or a slight variant thereof) are given by the two coordinates of the encoding walk, then gluing these trees together in a certain manner (the encoding is slightly different for each model). Each of the random planar maps in this family can be viewed as a discretization of $\gamma$-LQG for an appropriate value of $\gamma \in (0,2)$. The value of $\gamma$ is determined by the correlation of the coordinates of the encoding walk, which is equal to $-\cos(\pi\gamma^2/4)$. We will explain this point in more detail below.

The above bijections enable us to compare each of these random planar maps to the \emph{$\gamma$-mated-CRT map}, which is a random planar map constructed in the continuum from a pair of correlated Brownian motions with correlation $-\cos(\pi\gamma^2/4)$ via a version of the aforementioned bijections (we give the precise definition just below).  The comparison is accomplished using the results of~\cite{ghs-map-dist}, which in turn are proven by means of a strong coupling result for the encoding walk with the Brownian motions used to construct the mated-CRT map~\cite{zaitsev-kmt,kmt} (see Section~\ref{sec-coupling}).  This comparison reduces the problem of proving estimates for random walk on our given random planar map to the problem of proving estimates for random walk on the $\gamma$-mated-CRT map. Random walk on the mated-CRT map, in turn, can be analyzed by means of the relationship between mated-CRT maps and SLE-decorated LQG~\cite{wedges}, building on the estimates for harmonic functions on the mated-CRT map proven in~\cite{gms-harmonic}. See Figure~\ref{fig-cont-to-discrete-walk} for a schematic illustration of the relationships between the results and objects considered in this paper.

Our proofs use SLE/LQG theory, but can be understood with minimal knowledge of this theory provided the reader takes~\cite[Theorem~1.9]{wedges} (which relates the mated-CRT map to SLE-decorated LQG) and some estimates from~\cite{gms-harmonic} as black boxes. Note that we do \emph{not} use the convergence of the random walk on the mated-CRT map to Brownian motion as proven in~\cite{gms-tutte,gms-random-walk,bg-lbm}; rather, we just need some quantitative estimates for harmonic functions on the mated-CRT map proven in~\cite{gms-harmonic}.

\begin{figure}[ht!]
 \begin{center}
\includegraphics[scale=.8]{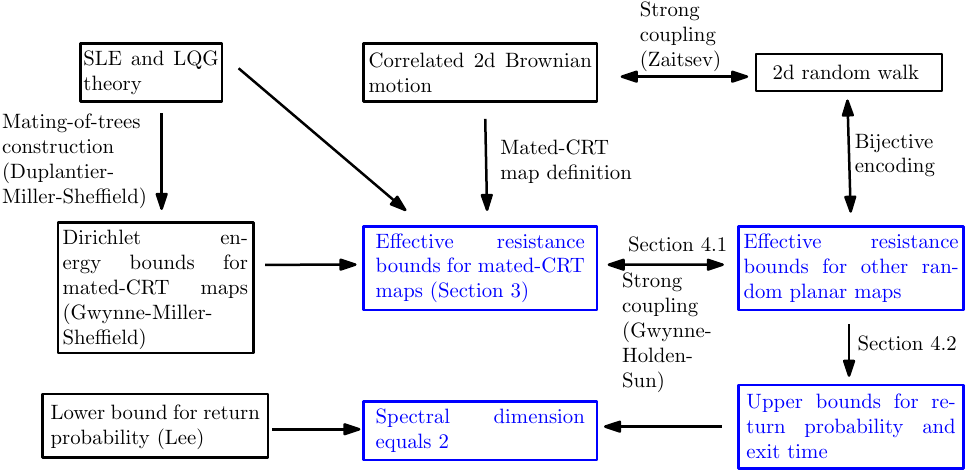} 
\caption{A schematic illustration of how the results of this paper fit together with other works (blue indicates what is accomplished in the present work). The mated-CRT map can be constructed from a two-dimensional correlated Brownian motion, or equivalently from a $\gamma$-LQG surface decorated by a space-filling SLE$_\kappa$ for $\kappa = 16/\gamma^2$ due to the results of~\cite{wedges} (c.f.\ Section~\ref{sec-peanosphere}). In the present paper, we prove estimates for effective resistance on the mated-CRT map using SLE/LQG theory (building on the Dirichlet energy bounds from~\cite{gms-harmonic}), then transfer to other random planar map models using a strong coupling result proven in~\cite{ghs-map-dist} (as explained in Section~\ref{sec-coupling}). This strong coupling result, in turn, is proven by bijectively encoding the other random planar map by means of a two-dimensional random walk, then coupling the encoding walk with the Brownian motion used to define the mated-CRT map using~\cite{zaitsev-kmt}. The effective resistance bounds for random planar maps then lead to our main results. Our upper bound for the return probability combines with the corresponding lower bound from~\cite[Theorem~1.6]{lee-uniformizing} to give that the spectral dimension of the planar maps we consider is a.s.\ equal to~$2$.
}\label{fig-cont-to-discrete-walk}
\end{center}
\end{figure}

\medskip
\noindent\textbf{Acknowledgements.} We thank Nina Holden, Asaf Nachmias, Scott Sheffield, and Xin Sun for helpful discussions. E.G.\ was partially funded by NSF grant DMS 1209044.

\subsection{Mated-CRT maps}  
\label{sec-mated-crt-map}

To define the $\gamma$-mated-CRT map for $\gamma \in (0,2)$, we start with a pair $Z = (L,R)$ of two-sided correlated Brownian motions with
\eqb \label{eqn-bm-cov}
\op{Var}(L_t) = \op{Var}(R_t) = |t| \quad \op{and} \quad \op{Cov}(L_t, R_t) = -\cos(\pi\gamma^2/4) |t| ,\quad \forall t \in \BB R.
\eqe 
Note that the correlation of $L$ and $R$ ranges over $(-1,1)$ as $\gamma$ ranges over $(0,2)$.

The mated-CRT map $\mcl G $ is the graph with vertex set $ \BB Z$, with two vertices $x_1,x_2\in  \BB Z$ with $x_1<x_2$ connected by an edge if and only if
\eqb  \label{eqn-inf-adjacency}
\left( \inf_{t\in [x_1- 1 , x_1]} L_t \right) \vee \left( \inf_{t\in [x_2- 1 , x_2]} L_t \right) \leq \inf_{t\in [x_1  , x_2 - 1]} L_t 
\eqe 
or the same holds with $R$ in place of $L$. If~\eqref{eqn-inf-adjacency} holds for both $L$ and $R$ and $|x_1-x_2|>1$, then there are two edges between $x_1$ and $x_2$. Geometrically, the condition~\eqref{eqn-inf-adjacency} means that either $|x_1-x_2| = 1$ or we can draw a horizontal line segment under the graph of $L$ with one endpoint in $[x_1- 1 , x_1]\times \BB R$ and the other endpoint in $[x_2- 1 , x_2] \times \BB R$ which intersects the graph of $L$ at its two endpoints. See Figure~\ref{fig-mated-crt-map}, which also explains how to put a planar map structure on $\mcl G $ under which it is a triangulation.

\begin{figure}[ht!]
 \begin{center}
\includegraphics[scale=.65]{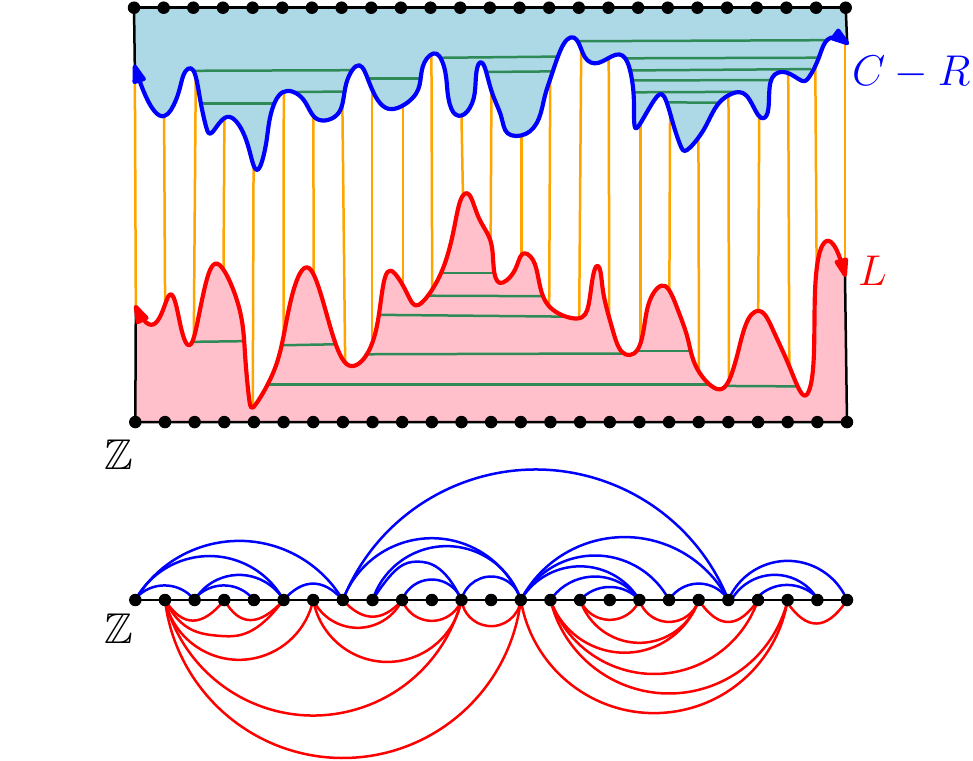}
\vspace{-0.01\textheight}
\caption{\textbf{Top:} To construct the mated-CRT map $\mcl G$ geometrically, one can draw the graph of $L$ (red) and the graph of $C-R$ (blue) for some large constant $C > 0$ chosen so that the parts of the graphs over some time interval of interest do not intersect. One then divides the region between the graphs into vertical strips (boundaries shown in orange) and identifies each strip with the horizontal coordinate $x\in \BB Z$ of its rightmost point. Vertices $x_1,x_2\in \BB Z$ are connected by an edge if and only if the corresponding strips are connected by a horizontal line segment which lies under the graph of $L$ or above the graph of $C-R$ (in particular, this happens if $|x_1-x_2| = 1$). One such segment is shown in green in the figure for each pair of vertices for which this latter condition holds.
\textbf{Bottom:} One can draw the graph $\mcl G $ in the plane by connecting two vertices $x_1,x_2 \in  \BB Z$ by an arc above (resp.\ below) the blue line if the corresponding strips are connected by a horizontal segment above (resp.\ below) the graph of $L$ (resp.\ $C-R$); and connecting each pair of consecutive vertices of $ \BB Z$ by an edge. This gives $\mcl G $ a planar map structure under which it is a triangulation. Note that the bottom picture does not correspond to the same mated-CRT map realization as the top picture.
}\label{fig-mated-crt-map}
\end{center}
\end{figure}
 
If we ignore $R$ and consider only the graph on $ \BB Z$ constructed from $L$ with the adjacency condition~\eqref{eqn-inf-adjacency}, we obtain a discretization of the continuum random tree associated with $L$~\cite{aldous-crt1,aldous-crt2,aldous-crt3}. Hence $\mcl G $ can be viewed as a discretization of the mating of two correlated CRTs. 

Mated-CRT maps are a particularly natural family of random planar maps to study.  One reason for this is that they provide bridge between discrete and continuum models (see Figure~\ref{fig-bijections} for an illustration).

To explain this, we first note that the $\gamma$-mated-CRT map is a coarse-grained approximation to many other natural random planar map models which can be obtained by gluing together a pair of discrete random trees whose contour functions have correlation $-\cos(\pi\gamma^2/4)$ in a manner directly analogous to the construction of the mated-CRT map~\cite{mullin-maps,bernardi-maps,shef-burger,kmsw-bipolar,gkmw-burger,lsw-schnyder-wood,bernardi-dfs-bijection,bhs-site-perc}. This idea is made precise in~\cite{ghs-map-dist} using the strong coupling theorem of Zaitsev~\cite{zaitsev-kmt} for random walk and Brownian motion (which is the multi-dimensional analog of~\cite{kmt}).

On the other hand, mated-CRT maps are directly connected to $\gamma$-LQG decorated by SLE$_\kappa$ for $\kappa = 16/\gamma^2$ via the peanosphere (or ``mating of trees") construction of~\cite{wedges}. We will describe this connection in Section~\ref{sec-peanosphere} below, but let us give a rough idea here. If we consider a certain type of $\gamma$-LQG surface decorated by a space-filling variant of SLE$_\kappa$ $\eta$ sampled independently from the surface and parameterized by $\gamma$-LQG mass, then the mated-CRT map can be realized as the graph of ``cells" $\eta([x-1,x])$ for $x\in\BB Z$, with two such cells considered to be adjacent if they intersect along a non-trivial connected boundary arc.
This gives us an embedding of the mated-CRT map into $\BB C$ by sending each vertex to the corresponding cell. (See Figure~\ref{fig-bijections}.)

One way to prove statements about random walk on a random planar map is to embed the map into $\BB C$ in some way, then consider how the embedded map interacts with paths and functions in $\BB C$. A number of previous papers have used this strategy with the circle packing embedding (see~\cite{stephenson-circle-packing} for an introduction) to study random walk on various random planar maps; see, e.g.,~\cite{benjamini-schramm-topology,gn-recurrence,abgn-bdy,gill-rohde-type,angel-hyperbolic,lee-conformal-growth,lee-uniformizing,cg-liouville}. In contrast, here we use the a priori embedding of the mated-CRT map which comes from space-filling SLE instead of circle packing. This allows us to prove estimates for random walk on the mated-CRT map, which we then transfer to other random planar maps using the aforementioned strong coupling results.

\begin{figure}[ht!]
 \begin{center}
\includegraphics[scale=.65]{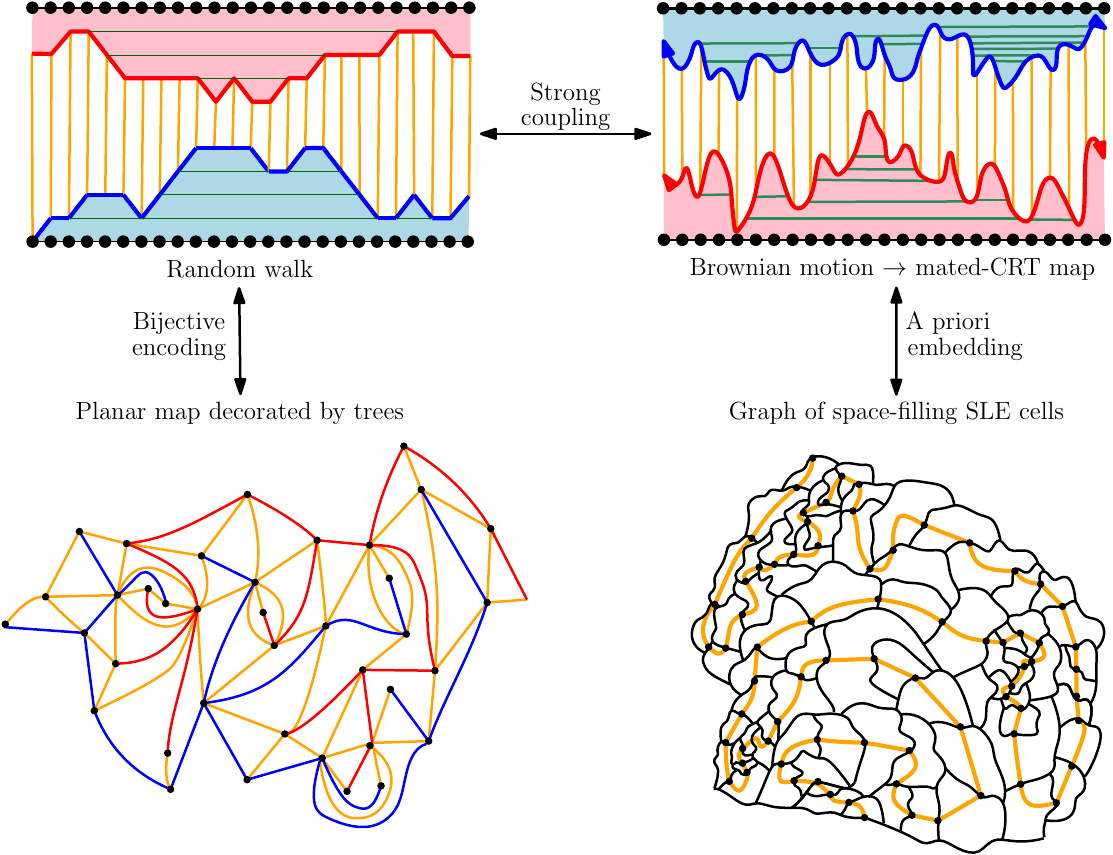} 
\caption{A visual representation of the perspective taken in this paper. On the left, we show a random planar map decorated by a pair of trees encoded by a two-dimensional random walk using a discrete version of the construction of the mated-CRT map. On the right, we show the mated-CRT map, which is defined using a pair of Brownian motions and has an a priori embedding into $\BB C$ as the adjacency graph of the ``cells" $\eta([x-1,x])$ for $x\in\BB Z$, where $\eta$ is a space-filling SLE parameterized by $\gamma$-LQG mass. Using this embedding, one can prove estimates for random walk on the mated-CRT map. One can then transfer these estimates from the mated-CRT map to other random planar maps (up to a polylogarithmic error) using a strong coupling of the Brownian motion used to define the mated-CRT map and the encoding walk for the other random planar map. 
}\label{fig-bijections}
\end{center}
\end{figure}

\subsection{Main results}
\label{sec-main-results}

Let $(M,\BB v)$ be one of the following types of infinite-volume random planar map models with its natural distinguished root vertex. In each case, we have indicated the $\gamma$-LQG universality class in parentheses.  
\begin{enumerate}
\item The \emph{uniform infinite planar triangulation} (UIPT) of type II, which is the local limit of uniform triangulations with no self-loops, but multiple edges allowed~\cite{angel-schramm-uipt} ($\gamma=\sqrt{8/3}$). 
\item The \emph{uniform infinite spanning-tree decorated planar map}, which is the local limit of random spanning-tree weighted planar maps~\cite{shef-burger,chen-fk} ($\gamma = \sqrt 2$).
\item The \emph{uniform infinite bipolar oriented planar map}, as constructed in~\cite{kmsw-bipolar}\footnote{See~\cite[Section~3.3]{ghs-map-dist} for a careful proof that the infinite-volume bipolar-oriented planar maps considered in this paper exist as Benjamini-Schramm~\cite{benjamini-schramm-topology} limits of finite bipolar-oriented maps.} ($\gamma = \sqrt{4/3}$). 
\item More generally, one of the other distributions on infinite bipolar-oriented maps considered in~\cite[Section~2.3]{kmsw-bipolar} for which the face degree distribution has an exponential tail and the correlation between the coordinates of the encoding walk is $-\cos(\pi\gamma^2/4)$ (e.g., an infinite bipolar-oriented $k$-angulation for $k\geq 3$ --- in which case $\gamma=\sqrt{4/3}$ --- or one of the bipolar-oriented maps with biased face degree distributions considered in~\cite[Remark~1]{kmsw-bipolar}, for which $\gamma \in (0,\sqrt 2)$).
\item The \emph{uniform infinite Schnyder-wood decorated triangulation}, as constructed in~\cite{lsw-schnyder-wood} ($\gamma = 1$).
\item The $\gamma$-mated-CRT map for $\gamma \in (0,2)$, as defined in Section~\ref{sec-mated-crt-map}, with $\BB v = 0$.
\end{enumerate}
As discussed in Section~\ref{sec-overview}, the reason for considering these random planar map models is that each of the first five models can be encoded by means of a two-sided, two-dimensional random walk with i.i.d.\ increments via a mating-of-trees bijection, which allows us to compare them to the mated-CRT map by means of the coupling result in~\cite{ghs-map-dist} (see~\cite{ghs-map-dist} for a review of each of these bijections). All of the results stated in this subsection also apply to other random planar map models encoded by walks with i.i.d.\ increments under such bijections, e.g., the random planar maps constructed in~\cite[Section~2]{ghs-map-dist}.

We will use that the degree of the root vertex for each of the above random planar maps has an exponential tail. This can be easily deduced from the mating-of-trees bijections and is proven in~\cite[Lemma~4.2]{angel-schramm-uipt} in the case of the UIPT,~\cite[Section 4.2]{chen-fk} in the case of the infinite spanning-tree decorated map,~\cite[Section 3.3]{ghs-map-dist} in the case of bipolar-oriented and Schnyder-wood decorated maps, and~\cite[Lemma~2.2]{gms-harmonic} in the case of the mated-CRT maps. 

Before stating our main results, we introduce the following definitions, which we will use frequently.
 
\begin{defn} \label{def-rw-law}
For a graph $G$ and a vertex $v$ of $G$, we write $\ol{\BB P}_v^{G}$ for the law of the simple random walk $X^{G}$ on $G$ started from $v$.  We write $\ol{\BB E}_v^{G}$ for the corresponding expectation. 
\end{defn}

Typically, we will take $G=M$ so that we are working with a random graph.  In this case, $\ol{\BB P}_v^M$ is the quenched law of the random walk and $\ol{\BB E}_v^M$ is the corresponding quenched expectation.

\begin{defn}
\label{def-metric-ball}
For a graph $G$, we write $\op{dist}^G(\cdot,\cdot)$ for the graph distance on $G$. For $r \geq 0$ and a vertex $v$ of $G$, we write $\mcl B_r^{G}(v)$ for the subgraph of $G$ consisting of the vertices of $G$ lying at graph distance at most $r$ from $v$ and the edges which join two such vertices. 
\end{defn}
 
Our first main result is an upper bound for the Green's function on the diagonal, both at a fixed time and at the exit time from a metric ball. For $n\in\BB N$, let $\op{Gr}_n^M(\cdot,\cdot)$ be the Green's function on $M$, i.e., $\op{Gr}_n^M(v_1,v_2)$ for vertices $v_1,v_2 \in M$  gives the (conditional given $M$) expected number of times that simple random walk on $M$ started from $v_1$ hits $v_2$ before time $n$. 

\begin{thm} \label{thm-map-green}
There exists $\alpha  , C , p>0$ (depending on the particular model) such that the following is true. For $n\in\BB N$, 
\eqb \label{eqn-map-green}
 \BB P\left[  \op{Gr}_n^M(\BB v ,\BB v ) \leq C (\log n)^{p}  \right] =   1- O_n\left( \frac{1}{(\log n)^\alpha} \right).  
\eqe
Furthermore, if we let 
\eqb \label{eqn-exit-time-def}
\sigma_r := \min\left\{ j \in \BB N : X_j^n \notin \mcl B_r^M(\BB v) \right\} ,\quad \forall r  \in \BB N
\eqe
be the exit time of the simple random walk from the ball of radius $r$ then
\eqb \label{eqn-map-eff-res0}
\BB P\left[  \op{Gr}_{\sigma_r}^M(\BB v ,\BB v ) \leq C (\log r )^{p}  \right] =   1- O_r\left( \frac{1}{(\log r)^\alpha} \right).  
\eqe
\end{thm}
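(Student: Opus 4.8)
The plan is to reduce both bounds in Theorem~\ref{thm-map-green} to effective resistance estimates via the standard identities relating the Green's function to effective resistance, and then to obtain those resistance estimates on the mated-CRT map from SLE/LQG theory (in particular the Dirichlet energy bounds of~\cite{gms-harmonic} and the embedding of the mated-CRT map via space-filling SLE), transferring to the other models via the strong coupling of~\cite{ghs-map-dist}. Concretely, recall that for a recurrent graph $G$, the Green's function of random walk started at $v$ and killed on exiting a finite set $A$ satisfies $\op{Gr}_A^G(v,v) = \deg(v)\cdot R_{\op{eff}}^G(v \leftrightarrow \partial A)$, where $R_{\op{eff}}$ is the effective resistance with unit conductances. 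Since the root degree has an exponential tail in all the models under consideration, it suffices to control $R_{\op{eff}}^M(\BB v \leftrightarrow \partial \mcl B_r^M(\BB v))$ and, for the fixed-time bound~\eqref{eqn-map-green}, to combine this with an estimate saying that the walk run for $n$ steps stays inside a metric ball of radius at most a small power of $n$ with high probability (so that $\op{Gr}_n^M(\BB v,\BB v) \le \op{Gr}_{\sigma_r}^M(\BB v,\BB v)$ on that event, for $r$ polynomial in $n$). The latter containment is a crude volume/displacement statement, so the real content is the effective resistance bound~\eqref{eqn-map-eff-res0}.

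For the effective resistance bound, I would first prove the analogous statement for the $\gamma$-mated-CRT map $\mcl G$ using its a priori embedding $x \mapsto \eta([x-1,x])$ coming from a space-filling SLE$_\kappa$ on an independent $\gamma$-LQG surface. The key point is that effective resistance from a vertex to the boundary of a combinatorial neighborhood can be upper bounded by exhibiting a unit flow of small energy, or equivalently (by the dual characterization) by producing a function on vertices that separates $\BB v$ from $\partial \mcl B_r$ with small Dirichlet energy $\sum_{x \sim y}(f(x)-f(y))^2$. Using the embedding, one can build such a function by pulling back a suitable smooth function on $\BB C$ (roughly, a function that is $0$ near the cell containing $\BB v$ and $1$ outside a Euclidean ball that the metric ball $\mcl B_r^{\mcl G}(\BB v)$ is contained in); the discrete Dirichlet energy of the pullback is controlled by the Dirichlet energy estimates from~\cite{gms-harmonic}, which bound $\sum_{x\sim y} (\text{diam}\,\eta([x-1,x]))^2$-type quantities on the embedded map in terms of the LQG/SLE data. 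One then needs the metric-ball-to-Euclidean-ball comparison: $\mcl B_r^{\mcl G}(\BB v)$ is contained in a Euclidean ball whose radius is at most $r^{o(1)}$ (with quenched probability $1 - O_r((\log r)^{-\alpha})$), which follows from the metric comparison results relating mated-CRT map graph distance to the LQG metric, together with the ball volume growth results of~\cite{dg-lqg-dim,dzz-heat-kernel}. Since the effective resistance scales like a dimension-$2$ quantity (logarithmically in the Euclidean scale for a two-dimensional embedding), this yields $R_{\op{eff}}^{\mcl G}(\BB v \leftrightarrow \partial \mcl B_r^{\mcl G}(\BB v)) \le (\log r)^p$ off an event of the stated probability.

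Finally, to pass from the mated-CRT map to the UIPT and the other mating-of-trees models, I would invoke the strong coupling of~\cite{ghs-map-dist}: on an event of probability $1 - O_r((\log r)^{-\alpha})$ there is a coupling under which a metric ball $\mcl B_r^M(\BB v)$ in the model $M$ and a metric ball in $\mcl G$ of comparable (up to $(\log r)^{O(1)}$) radius are related by a rough isometry, or at least the relevant subgraphs agree up to controlled distortion of graph distances and degrees. Effective resistance is monotone and well-behaved under rough isometries and bounded-degree perturbations (Rayleigh monotonicity plus the fact that subdividing/merging a bounded number of edges changes resistances by bounded factors), so the $(\log r)^p$ bound transfers, with the polylog power and the exponent $\alpha$ possibly worsening by a constant. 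The main obstacle I anticipate is the second step: engineering the test function / unit flow on the embedded mated-CRT map so that its discrete Dirichlet energy is genuinely $(\log r)^{O(1)}$ and not polynomially large. This requires the Dirichlet energy estimates of~\cite{gms-harmonic} to be quantitatively strong enough — essentially that the sum of squared cell diameters over cells in an annulus is bounded by a polylogarithmic factor times the number of annuli — and requires carefully handling the (possibly heavy-tailed, but exponentially controlled) fluctuations in individual cell sizes and the interface between the metric ball and the Euclidean region, which is where the $O_r((\log r)^{-\alpha})$ error term is incurred.
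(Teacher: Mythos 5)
Your high-level outline — reduce to effective resistance via the Green's function identity, prove the resistance bound on the mated-CRT map using its SLE embedding, transfer to the other models via the strong coupling — is indeed the structure of the paper's proof (Proposition~\ref{prop-map-eff-res} plus the one-line deduction given at the end of Section~\ref{sec-proof}). However, there are two points where what you wrote would not go through.

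The serious one is the variational principle you invoke for the effective-resistance upper bound. You write that $\mcl R^{\mcl G}(\BB v \leftrightarrow \partial\mcl B_r)$ "can be upper bounded by exhibiting a unit flow of small energy, or equivalently (by the dual characterization) by producing a function on vertices \ldots with small Dirichlet energy," and you then go on to describe pulling back a smooth annular test function so that its discrete Dirichlet energy is polylogarithmic. These two are not equivalent: Thomson's principle~\eqref{eqn-thomson} says that any unit flow's energy \emph{upper} bounds $\mcl R$, whereas Dirichlet's principle~\eqref{eqn-eff-res-dirichlet} says $\mcl R = 1/\op{Energy}(\frk f_V;G)$, so that any admissible test function's energy bounds $\op{Energy}(\frk f_V;G)$ from above and hence bounds $\mcl R$ from \emph{below}. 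A test function with small Dirichlet energy (the thing your pullback construction, together with the $\preceq$-direction of the \cite{gms-harmonic} Dirichlet energy estimates, actually produces) proves only that the resistance is \emph{large} — this is exactly how Proposition~\ref{prop-eff-res-lower} is proven — and gives no information toward~\eqref{eqn-map-eff-res0} or~\eqref{eqn-map-green}, which require an \emph{upper} bound. To upper-bound the resistance one must either build a unit flow (as the paper does, via the random segment $S$, the path $P^\ep$ in $\mcl G^\ep(S)$, and Thomson's principle in Section~\ref{sec-eff-res-upper}) or prove a \emph{lower} bound on the energy of the harmonic function $\frk f_V$ itself, which cannot be extracted from a single test function. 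This confusion is the central gap; as written, your Step 2 would prove the wrong inequality.

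Two smaller points. First, for the fixed-time bound~\eqref{eqn-map-green} you propose to show the walk stays in a ball of radius $n^{o(1)}$ with high probability and then use $\op{Gr}_n \leq \op{Gr}_{\sigma_r}$ on that event. This works but is unnecessary: since a single random-walk step changes the graph distance by at most $1$, the walk cannot leave $\mcl B_n^M(\BB v)$ in $n$ steps, so $\op{Gr}_n^M(\BB v,\BB v) \leq \op{Gr}_{\sigma_n}^M(\BB v,\BB v)$ holds deterministically, and $\log n$ versus $\log(n^{o(1)})$ is immaterial at the polylog level. Second, the transfer to the other models should not be phrased as generic "rough isometry invariance of effective resistance" (which fails in general without bounded degree and additional structure); what the coupling of~\cite{ghs-map-dist} actually provides — and what Lemma~\ref{lem-energy-compare} exploits — is the stronger quantitative statement that adjacent vertices map to vertices joined by paths of polylogarithmic length, with each vertex covered by at most polylogarithmically many such paths, which is exactly what is needed to compare Dirichlet energies term by term.
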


Since the degree of $\BB v$ has an exponential tail, the estimate~\eqref{eqn-map-eff-res0} is equivalent to a polylogarithmic upper bound for the effective resistance from $\BB v$ to the boundary of the ball $\mcl B_r^M(\BB v)$ (see Section~\ref{sec-eff-res-def} for the definition of effective resistance).

We expect that in fact $\op{Gr}_n^M(\BB v ,\BB v )$ and $\op{Gr}_{\sigma_r}^M(\BB v ,\BB v)$ are bounded above and below by constants times $\log n$ and $\log r$, respectively, with high probability.
We prove this only in the special case of the mated-CRT maps.

\begin{thm} \label{thm-green}
For $\gamma \in (0,2)$, there exists $\alpha  = \alpha(\gamma) > 0$ and $C = C(\gamma) > 1$ such that the $\gamma$-mated-CRT map $\mcl G$ satisfies
\eqb \label{eqn-green}
 \BB P\left[ \frac{1}{C} \log n \leq \frac{ \op{Gr}_n^{\mcl G}(0,0) }{ \op{deg}^{\mcl G}\left(0  \right)} \leq C \log n  \right] =   1- O_n\left( \frac{1}{(\log n)^\alpha}  \right) ,  \quad \forall n\in\BB N 
\eqe
and
\eqb \label{eqn-eff-res0}
 \BB P\left[ \frac{1}{C} \log r \leq \frac{ \op{Gr}_{\sigma_r}^{\mcl G}(0,0) }{ \op{deg}^{\mcl G}\left(0  \right)} \leq C \log r  \right] =   1- O_r\left( \frac{1}{(\log r)^\alpha}  \right) ,  \quad \forall r \in\BB N 
\eqe 
where $\op{deg}^{\mcl G}(0)$ denotes the degree.
\end{thm}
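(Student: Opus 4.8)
The statement is, at heart, a two-sided bound on the effective resistance from the root to the boundary of a graph-metric ball of the mated-CRT map, and my plan is to prove exactly that and then read off both displays from it.

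\emph{Reduction to effective resistance.} Recall the standard identity $\op{Gr}_{\sigma_r}^{\mcl G}(0,0) = \op{deg}^{\mcl G}(0)\, R_{\mathrm{eff}}^{\mcl G}\!\big(0\leftrightarrow\mcl G\setminus\mcl B_r^{\mcl G}(0)\big)$, so $\eqref{eqn-eff-res0}$ is equivalent to $\tfrac1C\log r\le R_{\mathrm{eff}}^{\mcl G}(0\leftrightarrow\mcl G\setminus\mcl B_r^{\mcl G}(0))\le C\log r$ off an event of probability $O_r((\log r)^{-\alpha})$. Given this, $\eqref{eqn-green}$ follows by a soft argument: since one step of the walk changes $\op{dist}^{\mcl G}(\cdot,0)$ by at most one, the walk stays in $\mcl B_n^{\mcl G}(0)$ for $n$ steps, so $\op{Gr}_n^{\mcl G}(0,0)\le\op{Gr}_{\sigma_n}^{\mcl G}(0,0)$, which gives the upper bound in $\eqref{eqn-green}$ from $\eqref{eqn-eff-res0}$ at $r=n$; and for the lower bound one decomposes the visits to $0$ at time $n$ and uses the Markov property and diagonal-maximality of the Green's function to get $\op{Gr}_n^{\mcl G}(0,0)\ge\ol{\BB P}_0^{\mcl G}[\sigma_r\le n]\,\op{Gr}_{\sigma_r}^{\mcl G}(0,0)$, together with $\ol{\BB E}_0^{\mcl G}[\sigma_r]\le\frac{\op{Gr}_{\sigma_r}^{\mcl G}(0,0)}{\op{deg}^{\mcl G}(0)}\sum_{v\in\mcl B_r^{\mcl G}(0)}\op{deg}^{\mcl G}(v)=r^{O(1)}$ (from the resistance upper bound and the polynomial volume bound of~\cite{dg-lqg-dim}); taking $r$ a fixed small power of $n$ then makes $\ol{\BB P}_0^{\mcl G}[\sigma_r\le n]\ge\tfrac12$ by Markov's inequality, and $\log r\asymp\log n$. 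So it remains to prove the two-sided resistance bound.

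\emph{Passing to the SLE/LQG embedding.} I realize $\mcl G$ via~\cite[Theorem~1.9]{wedges} as the adjacency graph of the space-filling $\SLE_{16/\gamma^2}$ cells $H_x=\eta([x-1,x])$ of a $\gamma$-LQG surface, with $0\in H_0$, normalized so that $\operatorname{diam}(H_0)$ is of order $1$ (an $r$-independent random constant). Using the comparison between the mated-CRT graph metric and the LQG metric, hence with Euclidean balls, as provided by~\cite{dg-lqg-dim,ghs-map-dist}, I obtain constants $0<c_1\le c_2$ and an event of probability $1-O_r((\log r)^{-\alpha})$ on which $\{H_x: H_x\subseteq B_{r^{c_1}}(0)\}\subseteq\mcl B_r^{\mcl G}(0)\subseteq\{H_x: H_x\cap B_{r^{c_2}}(0)\neq\emptyset\}$. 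This reduces the two-sided resistance bound to comparing $R_{\mathrm{eff}}^{\mcl G}$ with the continuum effective resistance of a Euclidean annulus between the circles of radii $O(1)$ and $r^{\Theta(1)}$, which is $\tfrac{1}{2\pi}\log(r^{\Theta(1)}/O(1))\asymp\log r$.

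\emph{The two variational bounds, via the Dirichlet energy estimates of~\cite{gms-harmonic}.} For the upper bound I use Thomson's principle with a discretization of the radial unit flow $z\mapsto\tfrac{1}{2\pi|z|}\widehat{z}$ on $\BB C$, absorbed as soon as it reaches $\mcl G\setminus\mcl B_r^{\mcl G}(0)$; by the previous paragraph this flow is supported on cells meeting $B_{r^{c_2}}(0)$, and the Dirichlet energy estimates of~\cite{gms-harmonic} bound its discrete energy by a constant times the continuum energy $\asymp\log r$. For the lower bound I use Dirichlet's principle with the test function $g(z)=\big(1-\tfrac{\log(|z|/\epsilon_0)}{c_1\log r}\big)_+$, with $\epsilon_0\asymp\operatorname{diam}(H_0)$, cut off to vanish on every cell not contained in $B_{r^{c_1}}(0)$ — and hence on $\mcl G\setminus\mcl B_r^{\mcl G}(0)$, by the previous paragraph; again~\cite{gms-harmonic} bounds its discrete Dirichlet energy by a constant times the continuum energy $\asymp 1/\log r$, so that the effective conductance is $O(1/\log r)$. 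Combining, $R_{\mathrm{eff}}^{\mcl G}(0\leftrightarrow\mcl G\setminus\mcl B_r^{\mcl G}(0))\asymp\log r$ with the required probability.

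I expect the last paragraph to be the main obstacle. Because the cells $H_x$ have heavy-tailed Euclidean diameters, converting the continuum radial flow and log-potential into genuine discrete objects of comparable energy is exactly what requires the quantitative input from~\cite{gms-harmonic}; moreover these estimates must be made to hold uniformly over the $\asymp\log r$ dyadic scales separating radius $O(1)$ from radius $r^{\Theta(1)}$, with the failure probabilities still summing to $O_r((\log r)^{-\alpha})$, which forces one to work with a single macroscopic comparison on the full annulus rather than a naive scale-by-scale union bound. Also somewhat delicate, though more routine, are the treatment of the cells straddling the circles $\partial B_{r^{c_i}}(0)$ in the cutoff for $g$ and the absorption of the flow, and the matching two-sided metric-ball comparison invoked above; the reductions in the first paragraph are straightforward.
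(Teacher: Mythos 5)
Your high-level architecture — reduce both displays of the theorem to a two-sided effective-resistance bound $\mcl R^{\mcl G}(0 \leftrightarrow \bdy\mcl B_r^{\mcl G}(0)) \asymp \log r$, then deduce the Green's function bounds — is exactly the paper's, and several of your reductions are clean. In particular, your route to the lower bound of~\eqref{eqn-green} is a genuine and arguably simpler alternative to the paper's. You claim $\op{Gr}_n(0,0) \ge \ol{\BB P}_0^{\mcl G}[\sigma_r \le n]\,\op{Gr}_{\sigma_r}(0,0)$, and this does hold: one writes $\op{Gr}_{\sigma_r}(0,0)$ as the expected number of visits to~$0$ in $[0,n]$ before $\sigma_r$ (which is $\le \op{Gr}_n(0,0)$) plus those in $(n,\sigma_r)$, and by the strong Markov property at time~$n$ together with $\op{Gr}_{\sigma_r}(v,0) = \ol{\BB P}_v[\tau_0 < \sigma_r]\op{Gr}_{\sigma_r}(0,0) \le \op{Gr}_{\sigma_r}(0,0)$ the second piece is $\le \ol{\BB P}_0[\sigma_r > n]\,\op{Gr}_{\sigma_r}(0,0)$. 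Combined with $\ol{\BB E}_0[\sigma_r] \le \mcl R^{\mcl G}(0\leftrightarrow\bdy\mcl B_r)\sum_{v\in\mcl B_r}\deg(v) = r^{O(1)}$ and Markov's inequality, this gives the lower bound with $r$ a fixed small power of~$n$. The paper instead uses the geometric distribution of the number of returns before $\sigma_r$ together with a Cauchy–Schwarz/reversibility argument (Lemma~\ref{lem-map-spd-compare} plus Theorem~\ref{thm-map-return}) to control $\ol{\BB P}_0[\sigma_{n^\beta}\le n]$; your Markov-inequality route is more direct, at the cost of needing the already-established resistance upper bound together with a (known, but not completely trivial) polynomial volume bound.

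The genuine gap is in your treatment of the effective-resistance upper bound, which is the technical heart of the theorem (Proposition~\ref{prop-eff-res-upper}). You assert that after discretizing the radial unit flow and invoking Thomson's principle, ``the Dirichlet energy estimates of~\cite{gms-harmonic} bound its discrete energy by a constant times the continuum energy $\asymp\log r$.'' This is not what those estimates deliver. The relevant input (\cite[Lemma~3.1]{gms-harmonic}) converts a sum over cells of $|\eta(x)|^{-2}\op{diam}(H_x)^2\deg(x)$ into a continuum integral \emph{only over a macroscopic annulus} $\BB A_{\ep^\beta,\rho}$: the estimate carries an additive error of the form $\ep^\alpha$ and requires the test function (here $|z|^{-2}$) to be controlled on the integration domain, neither of which survives as the inner radius shrinks toward the root cell. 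The paper therefore has to (i)~apply the macroscopic energy bound at a carefully chosen sequence of scales $b_n$ and use the scaling property~\eqref{eqn-cone-scale} of the $\gamma$-quantum cone to propagate it inward (Lemma~\ref{lem-flow-path-micro}), and (ii)~patch in a crude a~priori bound $\sum_e[\theta^\ep(e)]^2 \le \#\mcl E\mcl G^\ep(B_{R_{T\ep}}) \le T^{2+\zeta}$ on the remaining near-origin ball, valid because $|\theta^\ep(e)|\le 1$ (Lemma~\ref{lem-flow-path-tiny}). Choosing $T = (\log\ep^{-1})^{1/4}$ is then what produces the polylogarithmic probability bound. Your proposal omits step~(ii) entirely, and your remark that the accumulation of failure probabilities ``forces one to work with a single macroscopic comparison on the full annulus rather than a naive scale-by-scale union bound'' misdiagnoses the issue: the paper's argument \emph{is} a scale-by-scale union bound, but with scales $b_n = \ep^{1-(1-\xi)^n}$ chosen so that the per-scale failure probability is $(\ep/b_n)^\alpha$ (a power of the scale ratio, not a polylog), and the union over the $O(\log\log\ep^{-1})$ scales is dominated by the innermost one, giving $O(T^{-\alpha})$. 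Without the scaling/multi-scale mechanism and the near-origin a~priori bound, the flow energy near the root cell is uncontrolled and the $\log r$ upper bound does not follow. (Your lower bound via the truncated log potential and Dirichlet's principle is in better shape, since there the error term is dominated by the main term and one can extend inward by Dirichlet-energy monotonicity, which matches the role of~\cite[Theorem~1.4]{gms-harmonic} in the paper.)
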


The upper bound for the Green's function in Theorem~\ref{thm-green} improves in Theorem~\ref{thm-map-green} in the special case when $M=\mcl G$, and will be used to deduce Theorem~\ref{thm-map-green} for the other possible choices of $M$.  As we will explain, the lower bounds in Theorem~\ref{thm-green} is a straightforward consequence of~\cite[Theorem~1.4]{gms-harmonic}. We note that this lower bound implies that the simple random walk on $\mcl G$ is a.s.\ recurrent; this can also be deduced from the more general recurrence criterion of~\cite{gn-recurrence}. 
 
Our next result concerns return probabilities for random walk on $M$. 

\begin{thm}
\label{thm-map-return}
There exists $\alpha , C , p>0$ (depending on the particular model) such that for each $n\in\BB N$, it holds with probability $1-O_n( (\log n)^{-\alpha})$ that
\eqb \label{eqn-map-return}
\ol{\BB P}_{\BB v}^M\left[ X_{2n}^M  = \BB v \right] \leq \frac{C (\log n)^{p}}{n}.
\eqe
In the case when $M$ is the $\gamma$-mated-CRT map for $\gamma \in (0,2)$, one can take $p=1$. 
\end{thm}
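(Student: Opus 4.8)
The plan is to deduce Theorem~\ref{thm-map-return} as a soft consequence of the Green's function bounds already established: \eqref{eqn-map-green} of Theorem~\ref{thm-map-green} for the general statement, and \eqref{eqn-green} of Theorem~\ref{thm-green} for the sharper mated-CRT statement. The bridge between the return probability and the Green's function is the following elementary reversibility estimate, valid for simple random walk $X^G$ on any locally finite connected graph $G$ and any vertex $v$. The transition operator $P$ of $X^G$ is self-adjoint on $\ell^2$ of the degree measure and has operator norm at most $1$; since $\ol{\BB P}_v^G[X_{2k}^G = v] = \op{deg}^G(v)^{-1}\|P^k \delta_v\|^2$, the map $k \mapsto \ol{\BB P}_v^G[X_{2k}^G = v]$ is non-increasing in $k$. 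Summing and discarding odd times,
\[
\op{Gr}_{2n}^G(v,v) = \sum_{j=0}^{2n-1} \ol{\BB P}_v^G[X_j^G = v] \geq \sum_{k=0}^{n-1} \ol{\BB P}_v^G[X_{2k}^G = v] \geq n\, \ol{\BB P}_v^G[X_{2n}^G = v],
\]
so that $\ol{\BB P}_v^G[X_{2n}^G = v] \leq \tfrac1n \op{Gr}_{2n}^G(v,v)$ deterministically. This reduces the problem to upper bounds on $\op{Gr}_{2n}^M(\BB v,\BB v)$.

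For the general statement I would apply \eqref{eqn-map-green} with $2n$ in place of $n$. Since $\log(2n) \leq C\log n$ for $n \geq 2$ and $O_{2n}((\log(2n))^{-\alpha}) = O_n((\log n)^{-\alpha})$, this gives $\op{Gr}_{2n}^M(\BB v,\BB v) \leq C(\log n)^p$ off an event of probability $O_n((\log n)^{-\alpha})$, and the reversibility estimate then yields \eqref{eqn-map-return} with suitably enlarged $C$ and the same $p$. (Small $n$ is trivial since $\ol{\BB P}_{\BB v}^M[X_{2n}^M = \BB v] \leq 1$.)

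For the mated-CRT map, where $M = \mcl G$ and $\BB v = 0$, I would instead use the sharper upper bound in \eqref{eqn-green}: applied with $2n$ in place of $n$ it gives $\op{Gr}_{2n}^{\mcl G}(0,0) \leq C\, \op{deg}^{\mcl G}(0)\, \log n$ off an event of probability $O_n((\log n)^{-\alpha})$, whence $\ol{\BB P}_0^{\mcl G}[X_{2n}^{\mcl G} = 0] \leq \tfrac{C}{n}\op{deg}^{\mcl G}(0)\log n$. It then remains to absorb the root-degree factor. Since $\op{deg}^{\mcl G}(0)$ has an exponential tail, $\op{deg}^{\mcl G}(0)$ is at most a constant multiple of $\log\log n$, hence $(\log n)^{o_n(1)}$, off an event of probability $O_n((\log n)^{-\alpha})$; this already gives the return probability $n^{-1+o_n(1)}$ that is needed downstream, and the clean exponent $p=1$ follows either by tracking the proof of Theorem~\ref{thm-green} (which controls $\op{Gr}_{2n}^{\mcl G}(0,0)$ directly) or by noting that a large root degree forces the normalized Green's function in \eqref{eqn-green} to be correspondingly smaller.

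I do not expect any real obstacle: all of the substance lives in Theorems~\ref{thm-map-green} and~\ref{thm-green}, and the argument above is purely formal apart from the minor bookkeeping of the root-degree factor in the mated-CRT case. The one thing worth being careful about is to feed the reversibility estimate the time-$2n$ Green's function rather than the exit-time version in \eqref{eqn-map-eff-res0}, since the former is exactly the quantity the estimate produces.
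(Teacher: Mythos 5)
Your argument matches the paper's own proof in its essentials: monotonicity of $j\mapsto\ol{\BB P}_{\BB v}^M[X_{2j}^M = \BB v]$ (which the paper quotes from \cite[Proposition~10.18]{markov-mixing} rather than rederiving via self-adjointness, as you do) yields $\ol{\BB P}_{\BB v}^M[X_{2n}^M = \BB v] \leq n^{-1}\op{Gr}_{2n}^M(\BB v,\BB v)$, and the Green's function upper bounds — Theorem~\ref{thm-map-green} in general, Lemma~\ref{lem-green-upper} (equivalently the upper half of~\eqref{eqn-green}) for the mated-CRT map — finish the job.

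On the $p=1$ refinement for the mated-CRT map, you are right to flag the leftover factor of $\op{deg}^{\mcl G}(0)$; the paper's one-line invocation of Lemma~\ref{lem-green-upper} is equally silent on it. However, neither of your two proposed resolutions is correct as stated. Theorem~\ref{thm-green}, like Proposition~\ref{prop-sg-eff-res} from which it is deduced, controls the degree-\emph{normalized} quantity $\op{Gr}_n^{\mcl G}(0,0)/\op{deg}^{\mcl G}(0)$, not $\op{Gr}_n^{\mcl G}(0,0)$ itself, so ``tracking the proof'' does not remove the degree factor; and there is no a priori reason why a large root degree should drive the normalized Green's function down. What the exponential degree tail cleanly gives is $p = 1 + o_n(1)$, which is all that is needed downstream in Theorems~\ref{thm-map-spd} and~\ref{thm-map-displacement}.
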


It was proven by Lee~\cite{lee-conformal-growth,lee-uniformizing} (see in particular~\cite[Theorem~1.7]{lee-conformal-growth} or~\cite[Theorem~1.6]{lee-uniformizing}) that if $M$ is a local limit of finite planar graphs and the degree of the root vertex of $M$ has an exponential tail, then the complementary lower bound to~\eqref{eqn-map-return} holds, i.e., there is a constant $p\geq 1$ such that $ \ol{\BB P}_{\BB v}^M\left[ X_{2n}^M  = \BB v \right] \geq 1/(n (\log n)^p)$ with probability $1-O_n( (\log n)^{-1})$.\footnote{In fact, Lee considers only planar maps without multiple edges or self-loops, but it is straightforward to extend to the general case by adding a vertex to the middle of each edge of a general map to get a map without multiple edges or self-loops. We explain this carefully in Appendix~\ref{sec-return-lower}. We also note that a slightly weaker lower bound for the return probability for the planar maps considered here, with an $n^{o_n(1)}$ error rather than a polylogarithmic error, can be obtained from results of~\cite{gh-displacement}, which are proven using methods closer to those in this paper: see~\cite[Remark 3.9]{gh-displacement}. }
Each of the rooted random planar maps $(M,\BB v)$ considered here satisfies these hypotheses. 

Combining~\cite[Theorem~1.6]{lee-uniformizing} with Theorem~\ref{thm-map-return} allows us to show that the spectral dimension of $M$, which we now define, is a.s.\ equal to 2. For a graph $G$ and a vertex $v$ of $G$, the \emph{spectral dimension} is defined by 
\eqb \label{eqn-spd-def}
- 2 \lim_{n\to\infty} \frac{  \ol{\BB P}_v^G\left[ X_{2n}^{G} = x\right]}{\log n}  ,
\eqe 
if this limit exists.   It is easy to see that if the limit exists, it does not depend on the choice of $v$.
 
\begin{thm}
\label{thm-map-spd}
Almost surely, the spectral dimension of $M$ is equal to 2. More precisely, there exist constants $\alpha , p  > 0$ (depending on the particular model) such that for each $n\in\BB N$,  it holds with probability $1-O_n( (\log n)^{-\alpha})$ that
\eqb \label{eqn-map-spd-bound}
 \frac{1}{ n (\log n)^p} \leq  \ol{\BB P}_{\BB v}^M\left[ X_{2n}^M  = \BB v \right] \leq  \frac{(\log n)^p}{n } .
\eqe
\end{thm}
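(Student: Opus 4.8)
The plan is to obtain the two-sided estimate~\eqref{eqn-map-spd-bound} by combining the upper bound of Theorem~\ref{thm-map-return} with a matching lower bound due to Lee, and then to promote this per-$n$ high-probability estimate to the almost sure statement that the limit~\eqref{eqn-spd-def} equals $2$. For~\eqref{eqn-map-spd-bound}: by Theorem~\ref{thm-map-return} the bound $\ol{\BB P}_{\BB v}^M[X_{2n}^M=\BB v]\le C(\log n)^p/n$ holds with probability $1-O_n((\log n)^{-\alpha})$. On the other hand, each rooted map $(M,\BB v)$ under consideration is a local limit of finite planar maps and has root degree with an exponential tail, so~\cite[Theorem~1.6]{lee-uniformizing} --- applied after the reduction to simple graphs carried out in Appendix~\ref{sec-return-lower} --- provides $p'\ge 1$ with $\ol{\BB P}_{\BB v}^M[X_{2n}^M=\BB v]\ge 1/(n(\log n)^{p'})$ with probability $1-O_n((\log n)^{-1})$. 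A union bound over these two events, after replacing $p$ by $\max(p,p')$ and $\alpha$ by $\min(\alpha,1)$, yields~\eqref{eqn-map-spd-bound}.

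To deduce that the spectral dimension is almost surely $2$, rewrite~\eqref{eqn-map-spd-bound} as: for each fixed $n$, with probability $1-O_n((\log n)^{-\alpha})$ one has $\big|\,\log\ol{\BB P}_{\BB v}^M[X_{2n}^M=\BB v]/\log n+1\,\big|\le p\log\log n/\log n$, and the right side tends to $0$. This is convergence in probability; to upgrade to an almost sure limit I would fix a subsequence $(n_k)$ with $\sum_k(\log n_k)^{-\alpha}<\infty$ and $\log n_{k+1}/\log n_k\to 1$ --- for instance $n_k=\lceil\exp(k^{2/\alpha})\rceil$ --- and apply the Borel--Cantelli lemma to get that a.s.\ $\log\ol{\BB P}_{\BB v}^M[X_{2n_k}^M=\BB v]/\log n_k\to -1$. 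To interpolate to all $n$, I would use the standard fact that $n\mapsto\ol{\BB P}_{\BB v}^M[X_{2n}^M=\BB v]$ is non-increasing (the simple random walk transition operator is self-adjoint and of norm at most $1$ on $\ell^2$ of the degree measure). Indeed, for $n_k\le n\le n_{k+1}$ this monotonicity together with $\log n_k\le\log n\le\log n_{k+1}$ shows that $\log\ol{\BB P}_{\BB v}^M[X_{2n}^M=\BB v]/\log n$ lies between $\tfrac{\log n_{k+1}}{\log n_k}\cdot\log\ol{\BB P}_{\BB v}^M[X_{2n_{k+1}}^M=\BB v]/\log n_{k+1}$ and $\tfrac{\log n_k}{\log n_{k+1}}\cdot\log\ol{\BB P}_{\BB v}^M[X_{2n_k}^M=\BB v]/\log n_k$, both of which converge to $-1$. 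Hence a.s.\ $\log\ol{\BB P}_{\BB v}^M[X_{2n}^M=\BB v]/\log n\to -1$, i.e.\ the spectral dimension of $M$ is a.s.\ equal to $2$.

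The substance of the argument is entirely in Theorem~\ref{thm-map-return} and in~\cite{lee-uniformizing}; conditional on those this proof is soft. The two points that require a little care are the reduction to simple graphs needed to quote Lee's bound for maps with multiple edges (handled in Appendix~\ref{sec-return-lower}) and the choice of subsequence in the Borel--Cantelli step: the failure probability $O_n((\log n)^{-\alpha})$ decays too slowly to sum over all of $\BB N$, so one must pass to a subsequence, yet the subsequence must remain sparse enough for summability while keeping $\log n_{k+1}/\log n_k\to 1$ so that the monotonicity interpolation costs nothing in the exponent. I do not anticipate any genuine obstacle beyond these bookkeeping issues.
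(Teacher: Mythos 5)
Your proposal is correct and follows essentially the same route as the paper: combine Theorem~\ref{thm-map-return} with the lower bound from~\cite{lee-uniformizing} (via the reduction to simple maps in Appendix~\ref{sec-return-lower}) to get~\eqref{eqn-map-spd-bound}, then promote to an almost-sure limit by Borel--Cantelli along a subsequence $n_k=\exp(k^s)$ with $s>1/\alpha$ and interpolate using the monotonicity of $n\mapsto\ol{\BB P}_{\BB v}^M[X_{2n}^M=\BB v]$ from~\cite[Proposition~10.18]{markov-mixing}. Your explicit verification of the interpolation step is a welcome bit of care that the paper leaves implicit.
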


The estimate~\eqref{eqn-map-spd-bound} implies that the spectral dimension is a.s.\ equal to 2, even though the probability that it holds for a fixed $n$ is only $1-O_n( (\log n)^{-\alpha})$. The reason is that $n\mapsto \ol{\BB P}_{\BB v}^M\left[ X_{2n}^M  = \BB v \right]$ is non-increasing~\cite[Proposition~10.18]{markov-mixing}, so we can first prove convergence of the limit in~\eqref{eqn-spd-def} along a subsequence of the form $n_k = \exp(k^s)$ for $s > 1/\alpha$ using Borel-Cantelli, then use monotonicity to obtain the convergence for all $n$ (see Section~\ref{sec-proof}).
  
The spectral dimension is one of the most natural ways of associating a notion of dimension to a discrete fractal. This notion of dimension is important in the study of quantum gravity in the physics literature since it can be defined in a reparameterization invariant way~\cite{abnrw-spec-dim,aajiw-spec-dim}.  Theorem~\ref{thm-map-spd} is the first result to compute the spectral dimension of any planar map in the $\gamma$-LQG universality class.  We note, however, that the paper~\cite{chn-causal} shows that the spectral dimension of a \emph{causal triangulation}, a different type of random planar map which is not expected to converge to $\gamma$-LQG for any $\gamma$, is a.s.\ equal to~$2$.  There is also a purely continuum notion of the spectral dimension of $\gamma$-LQG, defined using the so-called \emph{Liouville heat kernel}, which is shown to be equal to~$2$ in~\cite{rhodes-vargas-spec-dim,andres-heat-kernel}. There are also other results concerning spectral dimensions of various graphs, e.g.,~\cite{kn-ao-conjecture} which computes the spectral dimension of the incipient infinite percolation cluster on $\BB Z^d$ for large $d$.

Our next main result gives a lower bound for the graph-distance displacement of the simple random walk in terms of the volume of a graph metric ball.

\begin{thm} \label{thm-map-displacement} 
For $r\in\BB N$, let $\sigma_r$ be the exit time from $\mcl B_r^M(\BB v)$, as in~\eqref{eqn-exit-time-def}.
There exists $\alpha > 0$, $C>0$, and $p>0$ (depending on the particular model) such that for each $r\in\BB N$, it holds with probability $1 - O_r( (\log r)^{-\alpha})$ that  
\eqb \label{eqn-map-exit-mean}
\ol{\BB E}_{\BB v}^M \left[ \sigma_r \right] \leq C (\log r)^p \# \mcl V \mcl B_r^M(\BB v )  ,
\eqe
where $\#\mcl V \mcl B_r^M(\BB v )$ denotes the number of vertices of the ball.
Furthermore, if we choose $r_n \to\infty$ such that $\BB P[\# \mcl V \mcl B_{r_n}^M(\BB v) \leq n  (\log n)^{-p}  ] \to 1$ as $n\to\infty$, then with probability tending to $1$ as $n\to\infty$, 
\eqb \label{eqn-map-exit-prob}
\ol{\BB P}_{\BB v}^M \left[ \op{dist}^M(X_n^M , \BB v )  \geq   r_n  \right] \geq 1 -   \frac{1}{C \log n} .
\eqe
\end{thm}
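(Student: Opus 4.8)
The plan is to deduce both assertions from the Green's-function estimate of Theorem~\ref{thm-map-green} and the return-probability estimate of Theorem~\ref{thm-map-return}, together with two routine a priori inputs about the ball $\mcl B_r^M(\BB v)$. First, with probability $1 - o_r(1)$ (in fact with failure probability polynomial in $1/r$) one has $\# \mcl V \mcl B_r^M(\BB v) \leq r^D$ for a model-dependent constant $D$ --- the standard polynomial volume bound for these maps --- and on this event $\max_{u \in \mcl V \mcl B_r^M(\BB v)} \op{deg}^M(u) \leq C \log r$. The degree bound follows from the exponential tail of $\op{deg}^M(\BB v)$ by a union bound over the at most $r^D$ vertices of the ball, legitimized by the mass-transport principle (each of our maps is unimodular, the mated-CRT map being stationary under the $\BB Z$-shift): transporting mass $\mathbbm{1}[y \in \mcl B_r^M(x)]\,\mathbbm{1}[\op{deg}^M(y) > t]$ from $x$ to $y$ gives $\BB P[\exists\, u \in \mcl B_r^M(\BB v) : \op{deg}^M(u) > t] \leq \BB E\big[\mathbbm{1}[\op{deg}^M(\BB v) > t]\, \# \mcl V \mcl B_r^M(\BB v)\big]$, which is polynomially small in $r$ for $t = C \log r$ with $C$ large.

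For the exit-time bound~\eqref{eqn-map-exit-mean} I would combine the occupation-time identity $\ol{\BB E}_{\BB v}^M[\sigma_r] = \sum_{u \in \mcl V \mcl B_r^M(\BB v)} \op{Gr}_{\sigma_r}^M(\BB v, u)$ with the reversibility bound $\op{Gr}_{\sigma_r}^M(\BB v, u) \leq \frac{\op{deg}^M(u)}{\op{deg}^M(\BB v)}\, \op{Gr}_{\sigma_r}^M(\BB v, \BB v)$, which holds because $(x,y) \mapsto \op{Gr}_{\sigma_r}^M(x,y)/\op{deg}^M(y)$ is symmetric on $\mcl V \mcl B_r^M(\BB v)$ and $\op{Gr}_{\sigma_r}^M(u, \BB v) = \ol{\BB P}_u^M[\tau_{\BB v} < \sigma_r]\, \op{Gr}_{\sigma_r}^M(\BB v, \BB v) \leq \op{Gr}_{\sigma_r}^M(\BB v, \BB v)$, where $\tau_{\BB v}$ is the hitting time of $\BB v$. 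Since $\op{deg}^M(\BB v) \geq 1$, this gives $\ol{\BB E}_{\BB v}^M[\sigma_r] \leq \op{Gr}_{\sigma_r}^M(\BB v, \BB v) \cdot \big( \max_{u \in \mcl V \mcl B_r^M(\BB v)} \op{deg}^M(u) \big) \cdot \# \mcl V \mcl B_r^M(\BB v)$, and inserting~\eqref{eqn-map-eff-res0} and the degree bound of the first paragraph yields~\eqref{eqn-map-exit-mean}, valid for any $p$ at least one more than the exponent of Theorem~\ref{thm-map-green}, on an event of probability $1 - O_r((\log r)^{-\alpha})$.

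For the displacement bound~\eqref{eqn-map-exit-prob} the crucial observation is that an exit-time bound by itself does not pin down the walk at the specific time $n$; for that I would feed the return-probability estimate into a Cauchy--Schwarz argument. Writing $g_n := \ol{\BB P}_{\BB v}^M[X_n^M \in \mcl B_{r_n}^M(\BB v)]$, reversibility gives $\ol{\BB P}_{\BB v}^M[X_{2n}^M = \BB v] = \sum_u \frac{\op{deg}^M(\BB v)}{\op{deg}^M(u)}\, \ol{\BB P}_{\BB v}^M[X_n^M = u]^2$; restricting the sum to $u \in \mcl B_{r_n}^M(\BB v)$, bounding $\op{deg}^M(u)$ above by the maximum degree in that ball, and applying Cauchy--Schwarz in the form $\sum_{u \in \mcl B_{r_n}^M(\BB v)} \ol{\BB P}_{\BB v}^M[X_n^M = u]^2 \geq g_n^2 / \# \mcl V \mcl B_{r_n}^M(\BB v)$ yields, using $\op{deg}^M(\BB v) \geq 1$, that $g_n^2 \leq \# \mcl V \mcl B_{r_n}^M(\BB v) \cdot \big( \max_{u \in \mcl B_{r_n}^M(\BB v)} \op{deg}^M(u) \big) \cdot \ol{\BB P}_{\BB v}^M[X_{2n}^M = \BB v]$. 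On the intersection of the events of the first paragraph (with $r = r_n \to \infty$, which also gives $\max_{u \in \mcl B_{r_n}^M(\BB v)} \op{deg}^M(u) \leq C \log n$ since $r_n \leq \# \mcl V \mcl B_{r_n}^M(\BB v) \leq n$), the hypothesis $\# \mcl V \mcl B_{r_n}^M(\BB v) \leq n(\log n)^{-p}$, and the bound $\ol{\BB P}_{\BB v}^M[X_{2n}^M = \BB v] \leq C(\log n)^{p_0}/n$ of Theorem~\ref{thm-map-return} --- an event of probability tending to $1$ --- this becomes $g_n^2 \leq C(\log n)^{1 + p_0 - p}$. Choosing $p$ at the outset larger than $p_0 + 3$ forces $g_n \leq 1/(C \log n)$, and since $X_n^M \notin \mcl B_{r_n}^M(\BB v)$ implies $\op{dist}^M(X_n^M, \BB v) > r_n$, this is exactly~\eqref{eqn-map-exit-prob}.

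Once Theorems~\ref{thm-map-green} and~\ref{thm-map-return} are available, the remaining work is light; the two points demanding care are the a priori maximum-degree estimate of the first paragraph (where unimodularity and the polynomial volume bound enter) and the bookkeeping of the polylogarithmic exponents --- the exponent $p$ in the statement has to be taken large relative to those of Theorems~\ref{thm-map-green} and~\ref{thm-map-return} so that~\eqref{eqn-map-exit-mean} and~\eqref{eqn-map-exit-prob} hold with one and the same $p$.
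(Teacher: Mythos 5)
Your overall route is the same as the paper's: the occupation-time identity $\ol{\BB E}_{\BB v}^M[\sigma_r] = \sum_u \op{Gr}_{\sigma_r}^M(\BB v,u)$ combined with the reversibility bound $\op{Gr}_{\sigma_r}^M(\BB v,u) \le \frac{\op{deg}^M(u)}{\op{deg}^M(\BB v)}\op{Gr}_{\sigma_r}^M(\BB v,\BB v)$ is exactly how the paper proves~\eqref{eqn-map-exit-mean}, and the reversibility/Cauchy--Schwarz calculation relating $\ol{\BB P}_{\BB v}^M[X_n^M\in\mcl B_{r_n}^M(\BB v)]^2$ to the return probability is exactly the paper's Lemma~\ref{lem-map-spd-compare}. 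Both halves of the argument are correct.

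The one place where you genuinely diverge from the paper --- and where there is a gap --- is the maximum-degree bound for the ball. The paper's Lemma~\ref{lem-map-max-deg} works through the encoding walk: by stationarity of the walk increments, each vertex of $M_{r^K}$ (a set of deterministic cardinality $\asymp r^K$ in terms of the walk steps) has degree with a uniform exponential tail, so a union bound over those $O(r^K)$ walk indices gives $\max_{v\in\mcl V(M_{r^K})}\op{deg}^M(v)\le C\log r$ with probability $1-O_r(r^{-A})$, and then $\mcl B_r^M(\BB v)\subset M_{r^K}$ (with high probability, via the rough isometry to $\mcl G$) transfers this to the metric ball. Your mass-transport argument arrives instead at the bound $\BB P[\exists u\in\mcl B_r^M(\BB v):\op{deg}^M(u)>t]\le \BB E\bigl[\mathbbm{1}[\op{deg}^M(\BB v)>t]\,\#\mcl V\mcl B_r^M(\BB v)\bigr]$, which is correct as an identity, but the claim that this is ``polynomially small in $r$ for $t=C\log r$'' does not follow from what you've stated: the two factors in the expectation are not independent, and an event-based polynomial volume bound does not control the first moment of $\#\mcl V\mcl B_r^M(\BB v)$ on the bad event, nor does it give the decoupling you need. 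To close this you would need, e.g., Cauchy--Schwarz/H\"older together with a polynomial second-moment bound $\BB E[(\#\mcl V\mcl B_r^M(\BB v))^2]\preceq r^{D'}$, which is plausible but is an extra ingredient that needs to be justified or cited; the paper's encoding-walk route avoids this issue entirely because it does a union bound over a \emph{deterministic} index set rather than over the random vertex set of the ball. I'd recommend replacing your mass-transport step with the paper's Lemma~\ref{lem-map-max-deg}, or supplying the moment bound explicitly.

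Two minor remarks. First, your bookkeeping of exponents at the end (``choose $p>p_0+3$'') is correct and matches the paper's handling, where they run the argument with $p+1$ in place of $p$. Second, as the paper points out, one should also verify that the prescribed $r_n$ actually exists and tends to infinity --- the paper cites polynomial lower tail bounds on $\#\mcl V\mcl B_r^M(\BB v)$ from~\cite{ghs-dist-exponent,ghs-map-dist} for this; worth a sentence in a complete write-up.
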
 

In the special case of the UIPT, it is known~\cite{angel-peeling} that $ r^{-4} \# \mcl V \mcl B_r^M(\BB v ) $ is bounded above and below by powers of $(\log r)^p$ with high probability. 
One obtains a similar bound (with a polylogarithmic error) in the case of the mated-CRT map with $\gamma = \sqrt{8/3}$ via strong coupling techniques~\cite[Theorem~1.8]{ghs-map-dist}. 
Consequently, Theorem~\ref{thm-map-displacement} implies the following.

\begin{thm} \label{thm-uipt-displacement}
Suppose we are in the case when $M$ is the UIPT of type II or the $\sqrt{8/3}$-mated-CRT map and let $\sigma_r$ be as in~\eqref{eqn-exit-time-def}. There is a constant $p>0$ such that with probability tending to $1$ as $r\to\infty$, 
\eqb \label{eqn-uipt-exit-mean}
\ol{\BB E}_{\BB v}^M \left[ \sigma_r \right] \leq (\log r)^p r^4
\eqe
and with probability tending to $1$ as $n\to\infty$, 
\eqb \label{eqn-uipt-exit-prob}
\ol{\BB P}_{\BB v}^M \left[ \op{dist}^M(X_n^M , \BB v )    \geq  (\log n)^{-p} n^{1/4}   \right] \geq  \frac{1}{  \log n} .  
\eqe 
\end{thm}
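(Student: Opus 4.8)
The plan is to deduce Theorem~\ref{thm-uipt-displacement} from Theorem~\ref{thm-map-displacement} together with the ball volume estimates recalled in the paragraph preceding its statement. Fix an exponent $p_0 > 0$ such that, with probability tending to $1$ as $r \to \infty$,
\eqbn
(\log r)^{-p_0} r^4 \leq \# \mcl V \mcl B_r^M(\BB v) \leq (\log r)^{p_0} r^4 ;
\eqen
such a $p_0$ exists for the UIPT of type II by~\cite{angel-peeling} and for the $\sqrt{8/3}$-mated-CRT map by~\cite[Theorem~1.8]{ghs-map-dist}. Let $\alpha, C, p_1 > 0$ be the constants in Theorem~\ref{thm-map-displacement} (with $p_1$ in the role of $p$ there).

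For~\eqref{eqn-uipt-exit-mean}: on the intersection of the event of probability $1 - O_r((\log r)^{-\alpha})$ from~\eqref{eqn-map-exit-mean} with the event of probability tending to $1$ on which the volume upper bound holds, we have $\ol{\BB E}_{\BB v}^M[\sigma_r] \leq C (\log r)^{p_1} \# \mcl V \mcl B_r^M(\BB v) \leq C (\log r)^{p_0 + p_1} r^4$, which for $r$ large enough that $\log r \geq C$ is at most $(\log r)^{p_0 + p_1 + 1} r^4$. Since the intersection of finitely many events of probability tending to $1$ has probability tending to $1$, this proves~\eqref{eqn-uipt-exit-mean} with $p = p_0 + p_1 + 1$.

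For~\eqref{eqn-uipt-exit-prob}: set $r_n := \lfloor n^{1/4} (\log n)^{-(p_0 + p_1 + 1)/4} \rfloor$, so that $r_n \to \infty$ and $\log r_n = (1 + o_n(1)) \log n$. On the volume-upper-bound event we have, for $n$ large, $\# \mcl V \mcl B_{r_n}^M(\BB v) \leq (\log r_n)^{p_0} r_n^4 \leq n (\log n)^{-p_1}$, so $\BB P[\# \mcl V \mcl B_{r_n}^M(\BB v) \leq n (\log n)^{-p_1}] \to 1$. Theorem~\ref{thm-map-displacement} applied with this choice of $r_n$ then gives that with probability tending to $1$, $\ol{\BB P}_{\BB v}^M[\op{dist}^M(X_n^M, \BB v) \geq r_n] \geq 1 - \frac{1}{C \log n}$. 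For $p$ sufficiently large (depending on $p_0$ and $p_1$) one has $r_n \geq (\log n)^{-p} n^{1/4}$ for all large $n$, so $\{\op{dist}^M(X_n^M, \BB v) \geq r_n\} \subseteq \{\op{dist}^M(X_n^M, \BB v) \geq (\log n)^{-p} n^{1/4}\}$; together with the trivial bound $1 - \frac{1}{C \log n} \geq \frac{1}{\log n}$ valid for large $n$, this gives~\eqref{eqn-uipt-exit-prob}. Enlarging $p$ so that the same exponent works for both parts completes the argument.

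There is essentially no analytic obstacle here: all the work is contained in Theorem~\ref{thm-map-displacement} and in the volume estimates of~\cite{angel-peeling} and~\cite[Theorem~1.8]{ghs-map-dist}. The only points requiring care are (i) confirming that those references give two-sided control of $\# \mcl V \mcl B_r^M(\BB v)$, up to polylogarithmic factors, around the root vertex of the full-plane UIPT (respectively of the mated-CRT map with $\BB v = 0$), and (ii) tracking how the polylogarithmic exponents from the various inputs compose, so that a single exponent $p$ can be chosen to make~\eqref{eqn-uipt-exit-mean} and~\eqref{eqn-uipt-exit-prob} hold simultaneously.
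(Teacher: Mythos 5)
Your proposal is correct and follows essentially the same route as the paper: plug the polylogarithmic volume bound $\# \mcl V \mcl B_r^M(\BB v) \preceq (\log r)^{p_0} r^4$ (from~\cite{angel-peeling} for the UIPT and from the strong coupling in~\cite{ghs-map-dist} for the $\sqrt{8/3}$-mated-CRT map) into the two parts of Theorem~\ref{thm-map-displacement}, choosing $r_n$ so that the hypothesis of~\eqref{eqn-map-exit-prob} is met and noting that a single enlarged exponent $p$ serves for both~\eqref{eqn-uipt-exit-mean} and~\eqref{eqn-uipt-exit-prob}. The only cosmetic remark is that you state a two-sided volume bound but only ever use the upper bound, which is also all the paper needs.
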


In the case of the $\sqrt{8/3}$-mated-CRT map or the UIPT, Theorem~\ref{thm-uipt-displacement} gives a lower bound for the graph-distance displacement of random walk on $M$ with the conjectured exponent of $1/4$.  The matching upper bound of  $n^{1/4 + o_n(1)}$ for $\op{dist}^M(X_n^M , \BB v )$ is proven in~\cite{gh-displacement} (see also~\cite{lee-exponents} for an alternative, more recent proof).  Together, these two works prove~\cite[Conjecture~1]{benjamini-curien-uipq-walk} in the case of the UIPT.  
Prior to this the best known upper bound for the graph-distance displacement of random walk on the UIPT was $n^{1/3} (\log n)^p$ for a constant $p > 0$~\cite[Corollary 2]{benjamini-curien-uipq-walk} (see also~\cite[Theorem~1.10]{lee-conformal-growth} for an upper bound on the displacement of the walk in a more general setting which gives $n^{1/3 + o_n(1)}$ in the case of the UIPT). 

More generally, it is shown in~\cite[Theorem~1.2]{dg-lqg-dim} (building on~\cite{dzz-heat-kernel}) that for each of the random planar maps considered in this paper,  
\eqbn
d_\gamma := \lim_{r\rta 0} \frac{\log \# \mcl V \mcl B_r^M(\BB v )}{\log r}  
\eqen
exists a.s.\ and depends only on $\gamma$. Hence Theorem~\ref{thm-map-displacement} implies that the random walk on $M$ typically travels graph distance at least $n^{1/d_\gamma+o_n(1)}$ in $n$ units of time. 
It is shown in~\cite{gp-kpz} that $d_\gamma$ coincides with the Hausdorff dimension of the $\gamma$-LQG metric as constructed in~\cite{gm-uniqueness}.
Computing $d_\gamma$ for $\gamma\not=\sqrt{8/3}$ is a major open problem; see~\cite{ghs-dist-exponent,ding-goswami-watabiki,dg-lqg-dim,gm-uniqueness} and the references therein.
However, reasonably sharp upper and lower bounds for $d_\gamma$ are known~\cite{dg-lqg-dim,gp-lfpp-bounds}, which can be plugged into Theorem~\ref{thm-map-displacement} to get an explicit lower bound for graph distance displacement of the walk. 
We also note that~\cite[Theorem 1.3]{gh-displacement} shows that the graph distance traveled by the walk after $n$ steps is typically at most $n^{1/d_\gamma + o_n(1)}$, so that the walk displacement exponent is the reciporical of the ball volume exponent.

\begin{remark}
The results in this paper for uniform random planar maps are stated only for the UIPT, not for other infinite-volume uniform random planar maps such as the uniform infinite planar quadrangulation (UIPQ). The reason for this is that we do not have mating-of-trees bijections for these other random planar maps. 
We expect that it is possible to transfer our results to other uniform infinite random planar maps, including the UIPQ and more generally uniform infinite $p$-angulations for $p\geq 4$, but we do not carry this out here.  
\end{remark}

\subsection{Outline}
\label{sec-outline}
 
In Section~\ref{sec-prelim}, we introduce some basic notation which we will use throughout the rest of the paper, recall some facts about effective resistance, and provide background on $\gamma$-Liouville quantum gravity surfaces and the results from~\cite{wedges} which relate the mated-CRT map to SLE-decorated LQG. 

In Section~\ref{sec-eff-res}, we focus exclusively on the mated-CRT map $\mcl G$ under its a priori embedding into $\BB C$ which comes from SLE-decorated LQG, as explained in Section~\ref{sec-peanosphere}. Our main goal is to prove up-to-constants bounds for the effective resistance in $\mcl G$ from the origin to the boundary of a graph-distance ball, i.e., to prove the bound~\eqref{eqn-eff-res0} from Theorem~\ref{thm-green} (Proposition~\ref{prop-sg-eff-res}; c.f.\ Section~\ref{sec-eff-res-def} for a review of the definition of effective resistance).
To accomplish this, we will first prove up-to-constants bounds for the effective resistance to the boundary of a \emph{Euclidean} ball, when the map is given the a priori embedding which comes from space-filling SLE.
These bounds are proven using estimates for the Dirichlet energy of certain harmonic functions on $\mcl G $ (which build on results from~\cite{gms-harmonic}). 

In Section~\ref{sec-discrete}, we first review a coupling between any one of the first five random planar maps listed in Section~\ref{sec-main-results} and the mated-CRT map with the same parameter $\gamma$, which was originally obtained in~\cite{ghs-map-dist} using~\cite{zaitsev-kmt}. We show that under this coupling, the Dirichlet energies of functions on the mated-CRT map and the other map are comparable up to polylogarithmic factors (Lemma~\ref{lem-energy-compare}). We then use this coupling and the main estimate of Section~\ref{sec-eff-res} to prove a polylogarithmic upper bound for the effective resistance to the boundary of a metric ball in the map $M$ (Proposition~\ref{prop-map-eff-res}) and deduce our main results from this upper bound. 

Appendix~\ref{sec-return-lower} contains a proof that the return probability lower bound from~\cite{lee-conformal-growth} extends to maps with multiple loops and/or self-edges. Appendix~\ref{sec-index} contains an index of some commonly used symbols.

\section{Preliminaries}
\label{sec-prelim}

\subsection{Basic notation}  
\label{sec-basic-notation}

\noindent
We write $\BB N$ for the set of positive integers and $\BB N_0 = \BB N\cup \{0\}$. 
\vspace{6pt}

\noindent
For $a,b \in \BB R$ with $a<b$ and $r > 0$, we define the discrete intervals $[a,b]_{r \BB Z} := [a, b]\cap (r \BB Z)$ and $(a,b)_{r \BB Z} := (a,b)\cap (r\BB Z)$.
\vspace{6pt} 

\noindent
If $a$ and $b$ are two quantities we write $a\preceq b$ (resp.\ $a \succeq b$) if there is a constant $C > 0$ (independent of the values of $a$ or $b$ and certain other parameters of interest) such that $a \leq C b$ (resp.\ $a \geq C b$). We write $a \asymp b$ if $a\preceq b$ and $a \succeq b$. We typically describe dependence of implicit constants in lemma/proposition statements and require constants in the proof to satisfy the same dependencies.
\vspace{6pt}
 
\noindent
If $a$ and $b$ are two quantities depending on a variable $x$, we write $a = O_x(b)$ (resp.\ $a = o_x(b)$) if $a/b$ remains bounded (resp.\ tends to 0) as $x\to 0$ or as $x\to\infty$ (the regime we are considering will be clear from the context). We write $a = o_x^\infty(b)$ if $a = o_x(b^s)$ for every $s\in\BB R$.  
\vspace{6pt}
 
\noindent
For a graph $G$, we write $\mcl V(G)$ and $\mcl E(G)$, respectively, for the set of vertices and edges of $G$, respectively. We sometimes omit the parentheses and write $\mcl VG = \mcl V(G)$ and $\mcl EG = \mcl E(G)$. For $v\in\mcl V(G)$, we write $\op{deg}^G(v)$ for the degree of $v$ (i.e., the number of edges with $v$ as an endpoint). For vertices $v_1,v_2 \in \mcl V(G)$, we say that $v_1\sim v_2$ in $G$ if $v_1$ and $v_2$ are connected by an edge in $G$. 
\vspace{6pt}

\noindent
For $r > 0$ and $z\in\BB C$ we write $B_r(z)$ for the open disk of radius $r$ centered at $z$. 
We abbreviate $B_r = B_r(0)$. 
\vspace{6pt}

\subsection{Harmonic functions and effective resistance}
\label{sec-eff-res-def}

The main tool in the proofs of our main theorems are various estimates for discrete harmonic functions on the mated-CRT map. Here we define some of the quantities that we will study, starting with \emph{Dirichlet energy}. 
 
\begin{defn} \label{def-discrete-dirichlet}
For a graph $G$ and a function $f : \mcl V(G) \to \BB R$, we define its \emph{Dirichlet energy} to be the sum over unoriented edges
\eqbn
\op{Energy}(f; G) := \sum_{\{x,y\} \in \mcl E(G)} (f(x) - f(y))^2 ,
\eqen
with edges of multiplicity $m$ counted $m$ times.
\end{defn}

Dirichlet energy is closely related to \emph{effective resistance}, which will also be important for us. 
We view a graph $G$ as an electrical network where each edge has unit resistance. 
For a vertex $x \in \mcl V(G)$ and a set $V\subset \mcl V(G)$ with $x\notin V$, the \emph{effective resistance} from $x$ to $V$ in $G$ is defined (using Definition~\ref{def-rw-law}) by
\allb \label{eqn-eff-res-def}
\mcl R^G\left( x \leftrightarrow V \right) 
&:= \op{deg}^G\left( x  \right)^{-1} \ol{\BB E}_x^G\left[ \# \left\{\text{times $X^G$ returns to $x$ before hitting $V$} \right\} \right] \notag \\ 
&=  \op{deg}^G\left( x \right)^{-1} \op{Gr}_{\tau_V}^G(x,x)
\alle
where in the last equality $\tau_V$ is the first time that $X^G$ hits $V$ and $\op{Gr}_{\tau_V}^G(\cdot,\cdot)$ is the Green's function for random walk stopped at time $\tau_V$ (as in Section~\ref{sec-main-results}). 

There is an equivalent representation for $\mcl R^G(x\leftrightarrow V)$ in terms of Dirichlet energy. Namely, let $\frk f_V  : \mcl V(G)  \to [0,1]$ be the function such that $\frk f_V(x ) = 1$, $\frk f_V|_V \equiv 0$, and $\frk f_V $ is discrete harmonic elsewhere. Then by \emph{Dirichlet's principle} (see, e.g.,~\cite[Exercise 2.13]{lyons-peres}),
\eqb \label{eqn-eff-res-dirichlet}
\mcl R^G\left( x \leftrightarrow V \right)  = \frac{1}{\op{Energy} (\frk f_V  ;  G ) } .
\eqe 

We will also need a third equivalent representation for effective resistance in terms of so-called unit flows, which we will use in Section~\ref{sec-eff-res-upper}. A \emph{unit flow} from $x$ to $V$ in $G$ is a function $\theta$ from oriented edges $e = (y,z)$ of $G$ to $\BB R$ such that $\theta(y,z) = -\theta(z,y)$ for each oriented edge $(y,z)$ of $G$ and
\eqbn 
\sum_{\substack{z \in \mcl V(G) \\ z\sim y}} \theta(y,z) = 0 \quad \forall z \in \mcl V(G)\setminus  ( \{x\} \cup V )   \quad\op{and} \quad
\sum_{\substack{z \in \mcl V(G) \\ z\sim x}} \theta(x,z) =1 .
\eqen
The quantity $\sum_{ z \in \mcl V(G) :  z\sim y } \theta(y,z)$ is called the \emph{divergence} of $\theta$ at $y$. 
By \emph{Thomson's principle} (see, e.g.,~\cite[Theorem 9.10]{markov-mixing}),
\eqb \label{eqn-thomson}
\mcl R^G(x\leftrightarrow V) = \inf\left\{ \sum_{e\in\mcl E(G)} [\theta(e)]^2 : \text{$\theta$ is a unit flow from $x$ to $V$} \right\} .
\eqe
The sum appearing on the right in~\eqref{eqn-thomson} is called the \emph{energy} of the flow $\theta$, by analogy with Definition~\ref{def-discrete-dirichlet}. We note that the sum is over unoriented edges, and that $[\theta(e)]^2$ is well-defined for an unoriented edge $e$ due to the anti-symmetry condition above.

\subsection{Liouville quantum gravity and the $\gamma$-quantum cone}  
\label{sec-lqg-prelim}

Heuristically speaking, a $\gamma$-Liouville quantum gravity (LQG) surface for $\gamma \in (0,2)$ is the random Riemannian surface parameterized by a domain $D\subset \BB C$ with Riemannian metric tensor $e^{\gamma h(z)} \, (dx^2+dy^2)$, where $h$ is some variant of the Gaussian free field (GFF) on $D$. We assume that the reader is familiar with the Gaussian free field; see~\cite{shef-gff,ss-contour,ig1,ig4} for more details. Of course, the preceding definition of a $\gamma$-LQG surface does not make rigorous sense since $h$ is a distribution, not a function, so cannot be exponentiated. Nevertheless, one can make rigorous sense of $\gamma$-LQG in various ways. 

Duplantier and Sheffield~\cite{shef-kpz} rigorously constructed the volume form associated with a $\gamma$-LQG surface, a measure $\mu_h$ which is the limit of regularized versions of $e^{\gamma h(z)} \,dz$, where $dz$ denotes Lebesgue measure. One can similarly define a $\gamma$-LQG boundary length measure $\nu_h$ on certain curves in $D$, including $\bdy D$ and SLE$_\kappa$-type curves for $\kappa = \gamma^2$~\cite{shef-zipper}. See~\cite{rhodes-vargas-review} for a review of a more general theory of regularized measures of this form, which dates back to Kahane~\cite{kahane}. 

Hence it makes sense to think of an LQG surface as a random measure space with a conformal structure. One would like to allow for different parameterizations of the same surface, so we consider equivalence classes.  For $\gamma \in (0,2)$ and $k\in\BB N_0$, a \emph{$\gamma$-LQG surface} with $k$ marked points is an equivalence class of $k+2$-tuples $(D ,h , z_1,\dots,z_k)$ where $D\subset \BB C$, $h$ is a distribution on $D$ (typically some variant of the GFF), and $z_1,\dots,z_k$ are marked points in $D\cup \bdy D$. Two such $k+2$-tuples $(D ,h , z_1,\dots,z_k)$ and $(\wt D , \wt h , \wt z_1 , \dots , \wt z_k)$ are declared to be equivalent (heuristically, this means they represent different parameterizations of the same surface) if there is a conformal map $f : \wt D \to D$ such that 
\eqb \label{eqn-lqg-coord}
\wt h = h \circ f + Q\log |f'| \quad \op{for} \quad Q = \frac{2}{\gamma} + \frac{\gamma}{2} \quad\op{and} \quad f(\wt z_j) = z_j ,\quad \forall j\in [1,k]_{\BB Z} .
\eqe
A particular choice of the distribution $h$ is called an \emph{embedding} of the $\gamma$-LQG surface. The reason for this definition is that if $h$ and $\wt h$ are related as in~\eqref{eqn-lqg-coord}, then the $\gamma$-LQG measures a.s.\ satisfy $\mu_h = f_* \mu_{\wt h}$ and $\nu_h = f_* \nu_{\wt h}$~\cite[Proposition~2.1]{shef-kpz}. 

The only type of $\gamma$-LQG surface in which we will be interested in this paper is the \emph{$\gamma$-quantum cone}, which was first defined in~\cite[Definition~4.10]{wedges}.  The $\gamma$-quantum cone is an infinite-volume LQG surface (i.e., the $\gamma$-LQG measure $\mu_h$ has infinite total mass) with two marked points, parameterized by $\BB C$.  A $\gamma$-quantum cone can be represented by $(\BB C , h , 0, \infty)$ for a certain type of distribution $h$ on $\BB C$, which is a slight modification of a whole-plane GFF plus $-\gamma \log |\cdot|$. 

Let $A : \BB R \to \BB R$ be the process such that $A_t =B_t  + \gamma t$ for $t\geq 0$, where $B$ is a standard linear Brownian motion; and for $t < 0$, let $A_t = \wh B_{-t} + \gamma t$, where $\wh B$ is a standard linear Brownian motion conditioned so that $\wh B_t  + (Q-\gamma) t > 0$ for all $t> 0$ (this singular conditioning is made sense of in~\cite[Remark 4.4]{wedges}).  We define $h$ to be the random distribution such that if $h_r(0)$ denotes the circle average of $h$ on $\partial B_r(0)$ (see~\cite[Section~3.1]{shef-kpz} for the definition and basic properties of the circle average), then $t\mapsto h_{e^{-t}}(0)$ has the same law as the process $A$; and $h - h_{|\cdot|}(0)$ is independent from $h_{|\cdot|}(0)$ and has the same law as the analogous process for a whole-plane GFF. 
 
By the definition of an LQG surface, the distribution $h$ is only defined up to re-scaling (as we have fixed only two marked points), but we will almost always consider the particular choice of embedding $h$ defined just above.  This choice of $h$ is called the \emph{circle average embedding}.  The circle average embedding possesses two key properties which are essentially immediate from the definition and will be important for our purposes.  The first property is that $h|_{\BB D}$ agrees in law with the corresponding restriction of a whole-plane GFF plus $-\gamma \log |\cdot|$, normalized so that its circle average over $\bdy \BB D$ is~$0$. 

The other property we will need is a certain scale invariance, which we now describe.
For $r > 0$ and $z\in \BB C$, let $h_r(z)$ be the circle average of $h$ over $\bdy B_r(z)$ and for $b > 0$, let
\eqb \label{eqn-mass-hit-time}
R_b := \sup\left\{ r > 0 : h_r(0) + Q \log r = \frac{1}{\gamma} \log b \right\} 
\eqe 
where here $Q$ is as in~\eqref{eqn-lqg-coord}. That is, $R_b$ gives the largest radius $r > 0$ so that if we scale spatially by the factor $r$ and apply the change of coordinates formula~\eqref{eqn-lqg-coord}, then the average of the resulting field on $\partial \BB D$ is equal to $\gamma^{-1} \log b$. 
Note that $R_0 = 1$ by the definition of the circle average embedding. It is easy to see from the definition of $h$ (and is shown in~\cite[Proposition~4.13(i)]{wedges}) that for each fixed $b>0$, 
\eqb \label{eqn-cone-scale}
h \eqD h(R_b \cdot) + Q \log R_b -  \frac{1}{\gamma} \log b .
\eqe
By~\eqref{eqn-lqg-coord}, if we let $h^b$ be the field on the right side of~\eqref{eqn-cone-scale}, then a.s.\ $\mu_{h^b}(A) = b \mu_h(R_b^{-1} A)$ for each Borel set $A\subset \BB C$. In particular, typically $\mu_h(B_{R_b}) \asymp b$. We will use the scale invariance property~\eqref{eqn-cone-scale} in Section~\ref{sec-eff-res} to transfer estimates at macroscopic scales to estimates at microscopic scales.
We will also need the following basic estimate for the radii $R_b$ in~\eqref{eqn-mass-hit-time}.

\begin{lem} \label{lem-cone-hit-tail}
Suppose $h$ is the circle average embedding of a $\gamma$-quantum cone. There is a constant $a = a(\gamma)  > 0$ such that for each $b_2>b_1>0$ and each $C > 1$,
\begin{align} \label{eqn-cone-hit-tail}
&\BB P\left[ C^{-1} (b_2/b_1)^{ \tfrac{1 }{\gamma (Q-\gamma) }} \leq R_{b_2}/R_{b_1} \leq C (b_2/b_1)^{ \tfrac{1 }{\gamma (Q-\gamma)}} \right] \notag\\
&\qquad\qquad\qquad \geq 1  -   3 \exp\left( - \frac{  a  (\log C)^2 }{   \log (b_2/b_1) +  \log C   } \right)  .
\end{align}
\end{lem}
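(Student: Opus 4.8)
## Proof proposal for Lemma~\ref{lem-cone-hit-tail}

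The plan is to reduce the statement to a tail estimate for the hitting times of a Brownian motion with drift, using the explicit description of the circle-average process of the $\gamma$-quantum cone given just above. Recall that if we set $X_t := h_{e^{-t}}(0) + Q e^{-t}\cdot(\text{nothing})$—more precisely, following~\eqref{eqn-mass-hit-time}, write $s = -\log r$ and consider $Y_s := h_{e^{-s}}(0) - Q s$, so that the defining relation $h_{R_b}(0) + Q\log R_b = \gamma^{-1}\log b$ becomes $Y_{-\log R_b} = \gamma^{-1}\log b$. By the definition of the circle-average embedding, $s\mapsto h_{e^{-s}}(0)$ has the law of the process $A$ (two-sided Brownian motion with drift $\gamma$ on $[0,\infty)$, and a conditioned Brownian motion on $(-\infty,0]$), so $Y_s = A_s - Qs$ has, for $s\geq 0$, the law of $B_s - (Q-\gamma)s$ for a standard Brownian motion $B$; since $Q - \gamma = 2/\gamma - \gamma/2 > 0$ for $\gamma\in(0,2)$, this is a Brownian motion with \emph{negative} drift $-(Q-\gamma)$. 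Hence $s\mapsto Y_s$ is (for large $s$, i.e.\ small $r$) a transient downward-drifting Brownian motion, and $-\log R_b$ is precisely the last time $Y$ hits the level $\gamma^{-1}\log b$.

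First I would handle the case $b_1 = R_0$-normalization carefully: by the remark after~\eqref{eqn-mass-hit-time}, $R_0 = 1$, and more generally by the scale-invariance~\eqref{eqn-cone-scale} the \emph{ratio} $R_{b_2}/R_{b_1}$ has the same law as $R_{b_2/b_1}$ computed for the field $h^{b_1}$, which is again a circle-average embedding of a $\gamma$-quantum cone. So it suffices to prove the one-sided statement: for $b > 1$ and $C > 1$,
\eqb
\BB P\left[ C^{-1} b^{\frac{1}{\gamma(Q-\gamma)}} \leq R_b \leq C\, b^{\frac{1}{\gamma(Q-\gamma)}} \right] \geq 1 - 3\exp\left( - \frac{a (\log C)^2}{\log b + \log C} \right),
\eqe
and then replace $b$ by $b_2/b_1$. (The case $b<1$ is symmetric after swapping roles / using the two-sided nature of $A$, or can be absorbed by monotonicity of $b\mapsto R_b$.) Writing $T := -\log R_b$, the event $\{R_b \le C b^{1/(\gamma(Q-\gamma))}\}$ is $\{T \ge -\log C - \tfrac{1}{\gamma(Q-\gamma)}\log b\}$ and similarly for the other side, so we need a concentration estimate for the last hitting time $T$ of level $\ell := \gamma^{-1}\log b$ by the drifted Brownian motion $Y$.

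The key computation is then the standard one: for Brownian motion $W_s = B_s - \mu s$ with $\mu = Q-\gamma > 0$, the last passage time of a level $\ell > 0$ is finite a.s., its mean is $\ell/\mu = \frac{\log b}{\gamma(Q-\gamma)}$ (matching the claimed exponent), and it has Gaussian-type tails on the scale $\sqrt{\ell}$ with a linear correction—precisely the form that yields a bound of the shape $\exp(-a(\log C)^2/(\log b + \log C))$ after substituting $\ell \asymp \log b$ and measuring deviations of size $\log C$. Concretely, I would bound $\BB P[T \ge \ell/\mu + \log C]$ by $\BB P[\exists s \ge \ell/\mu + \log C : Y_s = \ell]$; by transience and the reflection/Girsanov estimate, $\BB P[\sup_{s\ge t}(B_s - \mu s) \ge \ell - \mu t + \text{const}]$ decays like $\exp(-c(\mu t - \ell + \cdots)^2/t)$ for $t$ beyond $\ell/\mu$, which with $t = \ell/\mu + \log C$ gives exactly $\exp(-c(\log C)^2/(\ell/\mu + \log C)) \asymp \exp(-a(\log C)^2/(\log b + \log C))$. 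The lower-tail side, $\BB P[T \le \ell/\mu - \log C]$, i.e.\ $\BB P[\sup_{s\le \ell/\mu - \log C} Y_s \ge \ell]$ is a running-maximum estimate for Brownian motion with drift over a bounded interval, again of Gaussian type on scale $\sqrt{\log b}$, giving the same bound. On the initial segment near $s=0$ one must also control the contribution of the conditioned piece of $A$ (for $r > 1$), but for $b$ large the relevant hitting happens at large $s$ where $A$ is an honest drifted Brownian motion, and for the full two-sided statement the $b<1$ regime uses the conditioned piece—its tails are no heavier (the conditioning only pushes the path up, away from negative levels), so the same estimates apply.

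The main obstacle I anticipate is \emph{bookkeeping the constants and the exact form of the tail}, rather than any conceptual difficulty: one must verify that the last-passage-time deviation estimate really produces the specific combination $(\log C)^2/(\log b + \log C)$—in particular that the $+\log C$ in the denominator (rather than just $\log b$) is needed to cover the regime $\log C \gtrsim \log b$, where the Gaussian bound on scale $\sqrt{\log b}$ degenerates and one instead uses the genuinely sub-Gaussian large-deviation rate of the drifted BM over time $\asymp \log C$. I would organize this by splitting into the two regimes $\log C \le \log b$ and $\log C > \log b$ and checking each gives $\exp(-a(\log C)^2/(\log b + \log C))$ up to the constant $a = a(\gamma)$; the factor $3$ accounts for the upper tail of $T$, the lower tail of $T$, and (if needed) an auxiliary event controlling the conditioned/near-origin part of $A$. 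Everything else is a routine application of the circle-average description of the quantum cone from~\cite{wedges} together with elementary Brownian motion estimates.
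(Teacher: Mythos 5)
The paper's stated proof is a one-line citation to Lemma~2.1 of \cite{gms-tutte}, so strictly you are attempting a direct proof rather than matching the paper's stated one; but the direct argument (which the source also contains in a commented-out form) does follow the same outline you sketch: use the scale invariance~\eqref{eqn-cone-scale} to reduce the ratio $R_{b_2}/R_{b_1}$ to a single hitting time for the circle-average process, then apply Gaussian tail and reflection estimates for a drifted Brownian motion. Your final quantitative form $\exp\bigl(-a(\log C)^2/(\log(b_2/b_1)+\log C)\bigr)$ and your observation about the two regimes $\log C \lesssim \log(b_2/b_1)$ versus $\log C \gtrsim \log(b_2/b_1)$ are also correct.

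However, there is a concrete error in the identification of the hitting time, and it propagates through the rest of your sketch. Since $R_b=\sup\{r>0:h_r(0)+Q\log r=\gamma^{-1}\log b\}$ and you set $s=-\log r$ and $Y_s=h_{e^{-s}}(0)-Qs$, the quantity $-\log R_b$ is $\inf\{s:Y_s=\gamma^{-1}\log b\}$, \emph{not} a ``last'' hitting time. Moreover you reduce to $b>1$, but then $\gamma^{-1}\log b>0$ and (since $Y_0=0$ and $Y_s>0$ for $s<0$ by the conditioning) this infimum is \emph{negative}, so it lies entirely in the conditioned half $s<0$ of the process $A$ --- exactly the opposite of your assertion that ``for $b$ large the relevant hitting happens at large $s$ where $A$ is an honest drifted Brownian motion.'' Your subsequent last-passage computation for an unconditioned downward-drifting BM (with the claimed mean $\ell/\mu$) therefore concerns a different random variable than $-\log R_b$: the last passage of a downward-drifting BM started at $0$ to a positive level is $0$ with probability $1-e^{-2\mu\ell}$ and does not have the distribution you need. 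The clean fix is to reduce instead to $b<1$ (equivalently, apply the scale invariance/strong Markov property at the \emph{first} $s$ where $Y$ hits the higher level $\gamma^{-1}\log b_2$): then $\log(R_{b_2}/R_{b_1})$ has exactly the law of the first passage time of $-\gamma^{-1}\log(b_2/b_1)<0$ by an unconditioned Brownian motion with drift $-(Q-\gamma)$, which has mean $\frac{1}{\gamma(Q-\gamma)}\log(b_2/b_1)$, and your Gaussian/reflection estimates then apply verbatim to give the stated bound with the factor $3$ coming from $1+2$ across the two tails. (Alternatively one can keep $b>1$ and apply Williams' time reversal to the conditioned piece, which again lands on this first-passage law, but you would need to say so explicitly.)
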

\begin{proof}
This is a re-statement of~\cite[Lemma~2.1]{gms-tutte} in the special case when $\alpha=\gamma$.
\end{proof}

\subsection{Mated-CRT maps and SLE-decorated Liouville quantum gravity}  
\label{sec-peanosphere}

In this subsection we will describe the connection between mated-CRT maps and SLE-decorated LQG, as alluded to at the end of Section~\ref{sec-mated-crt-map}.  This connection will be our primary tool for analyzing mated-CRT maps. 

\emph{Schramm-Loewner evolution} (SLE$_\kappa$) for $\kappa > 0$ is a family of random fractal curves in the plane first introduced by Schramm~\cite{schramm0}. In this paper, we will be interested in a variant of SLE$_\kappa$ for $\kappa > 4$ called \emph{whole-plane space-filling SLE$_\kappa$ from $\infty$ to $\infty$} which is introduced in~\cite[Sections~1.2.3 and~4.3]{ig4} (see also~\cite[Section 1.4.1]{wedges} for the whole-plane case). This is a continuous, space-filling, non-crossing curve in $\BB C$ which a.s.\ hits Lebesgue-a.e.\ point of $\BB C$ exactly once. 

For $\kappa \geq 8$, ordinary SLE$_\kappa$ is space-filling and whole-plane space-filling SLE$_\kappa$ from $\infty$ to $\infty$ is a two-sided variant of chordal SLE$_\kappa$. In the case when $\kappa \in (4,8)$, ordinary SLE$_\kappa$ is not space-filling, and instead hits itself to form ``bubbles" which it disconnected from its target point. In this case, space-filling SLE$_\kappa$ is obtained from ordinary SLE$_\kappa$ by iteratively filling in these bubbles with SLE$_\kappa$-type curves (so in particular it is not a Loewner evolution). See~\cite[Section 3.6.3]{ghs-mating-survey} for a precise description of this construction. 

Space-filling SLE is a.s.\ a continuous curve when parametrized so that it traverses one unit of Lebesgue measure in one unit of time. The same is true with the LQG measure corresponding to, e.g., a $\gamma$-quantum cone used in place of the Lebesgue measure. Under either of these parametrizations, for almost every time $t$, the curve $\eta((-\infty,t])$ has well-defined left and right boundaries, which roughly speaking correspond to the sets of points on $\bdy\eta((-\infty,t])$ which lie to the left and right of the tip $\eta(t)$, respectively. The left and right boundaries of $\eta((-\infty,t])$ are each SLE$_{16/\kappa}$ type curves which can be realized as two flow lines of a whole-plane Gaussian free field, in the sense of~\cite{ig4}; see~\cite[Section 1.4.1]{wedges}. We will not need any further details concerning SLE$_\kappa$ and its variants in this paper.

Mated-CRT maps are related to SLE-decorated LQG via the peanosphere (or mating-of-trees) construction of~\cite[Theorem~1.9]{wedges}, which we now describe. See also the right side of Figure~\ref{fig-bijections}.  Suppose $h$ is the circle-average embedding of a $\gamma$-quantum cone, as in Section~\ref{sec-lqg-prelim}.  Also let $\kappa = 16/\gamma^2  > 4$ and let $\eta$ be a whole-plane space-filling SLE$_\kappa$ from $\infty$ to $\infty$ sampled independently from $h$ and then parameterized in such a way that $\eta(0) = 0$ and the $\gamma$-LQG mass satisfies $\mu_h(\eta([t_1,t_2])) = t_2 - t_1$ whenever $t_1 , t_2 \in \BB R$ with $t_1 < t_2$.

Let $\nu_h$ be the $\gamma$-LQG length measure associated with $h$ and define a process $L : \BB R \to \BB R$ in such way that $L_0 = 0$ and for $t_1,t_2 \in \BB R$ with $t_1<t_2$,
\allb \label{eqn-peanosphere-bm}
L_{t_2}- L_{t_1} &= \nu_h\left( \text{left boundary of $\eta([t_1,t_2]) \cap \eta([t_2,\infty))$} \right) \notag \\
&\qquad - \nu_h\left( \text{left boundary of $\eta([t_1,t_2]) \cap \eta((-\infty , t_1])$} \right).
\alle 
Define $R_t$ similarly but with ``right" in place of ``left" and set $Z_t = (L_t , R_t)$. It is shown in~\cite[Theorem~1.9]{wedges} that $Z$ evolves as a correlated two-dimensional Brownian with variances and covariances as in~\eqref{eqn-bm-cov}, so $Z$ has the same law as the Brownian motion used to construct the mated-CRT map with parameter $\gamma$. Moreover, by~\cite[Theorem 1.11]{wedges}, $Z$ a.s.\ determines $(h,\eta)$ modulo rotation.

We can re-phrase the adjacency condition~\eqref{eqn-inf-adjacency} in terms of $(h,\eta)$. In particular, it follows from from~\eqref{eqn-peanosphere-bm} (see~\cite[Section 8.2]{wedges}) that for $x_1,x_2 \in  \BB Z$ with $x_1 < x_2$,~\eqref{eqn-inf-adjacency} is satisfied if and only if the ``cells" $\eta([x_1- 1 , x_1])$ and $\eta([x_2- 1 , x_2])$ intersect along a non-trivial connected arc of their left outer boundaries; and similarly with ``$R$" in place of ``$L$" and ``left" in place of ``right". 

Consequently, the mated-CRT map $\mcl G$ is precisely the graph with vertex set $ \BB Z$, with two vertices connected by an edge if and only if the corresponding cells $\eta([x_1- 1 , x_1])$ and $\eta([x_2- 1 , x_2])$ share a non-trivial connected boundary arc (the vertices are connected by two edges if $|x_1-x_2| > 1$ and the cells intersect along both their left and right boundaries). 
The graph on cells is sometimes called the \emph{structure graph} of the curve $\eta$ since it encodes the topological structure of the cells. 
The identification of $\mcl G $ with the structure graph of $\eta$ gives us an a priori embedding of $\mcl G $ into $\BB C$ by sending each vertex $x\in \BB Z$ to the point $\eta(x)$.

\section{Effective resistance on the mated-CRT map}
\label{sec-eff-res}

Fix $\gamma \in (0,2)$ and let $\mcl G$ be the $\gamma$-mated-CRT map, as in Section~\ref{sec-mated-crt-map}.  Recall the definition of effective resistance from~\eqref{eqn-eff-res-def}. The goal of this section is to prove the following bound for the effective resistance in $\mcl G$ from the origin (i.e., the root vertex) to the boundary of a metric ball, which is a restatement of~\eqref{eqn-eff-res0} of Theorem~\ref{thm-green}.
 
\begin{prop} \label{prop-sg-eff-res}
There exists $\alpha=\alpha(\gamma)  > 0$ and $C = C(\gamma)  >0$ such that for each $r \in\BB N$, it holds with probability at least $1-O_r( (\log r)^{-\alpha})$ that
\eqb \label{eqn-sg-eff-res}
C^{-1} \log r \leq   \mcl R^{\mcl G} \left(  0 \leftrightarrow \bdy \mcl B_r^{\mcl G}(0)      \right)   \leq   C  \log r ,
\eqe 
where $\bdy \mcl B_r^{\mcl G}(0)$ denotes the set of vertices of $\mcl B_r^{\mcl G}(0)$ which are adjacent to vertices not in $\mcl B_r^{\mcl G}(0)$.
\end{prop}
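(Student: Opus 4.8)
The plan is to prove Proposition~\ref{prop-sg-eff-res} by first establishing the analogous up-to-constants bound for the effective resistance from $0$ to the boundary of a \emph{Euclidean} ball under the a priori embedding of $\mcl G$ coming from space-filling SLE, and then transferring this to graph-distance balls. Concretely, I would work with the circle average embedding $h$ of a $\gamma$-quantum cone together with the space-filling SLE$_\kappa$ curve $\eta$ from Section~\ref{sec-peanosphere}, so that a vertex $x \in \BB Z$ sits at $\eta(x) \in \BB C$. The first target is a statement of the form: for each fixed $\epsilon > 0$, with probability $1 - O_\epsilon((\log \epsilon^{-1})^{-\alpha})$, $$C^{-1} \log \epsilon^{-1} \leq \mcl R^{\mcl G}\left( 0 \leftrightarrow \{ x \in \BB Z : \eta(x) \notin B_\epsilon \} \right) \leq C \log \epsilon^{-1}.$$ The key mechanism is the identity~\eqref{eqn-eff-res-dirichlet}: the resistance is the reciprocal of the Dirichlet energy of the harmonic function which is $1$ at $0$ and $0$ outside $B_\epsilon$. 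To get a \emph{lower} bound on resistance (i.e., upper bound on energy) I would build an explicit test function — e.g. a regularized version of $\log(|z|/\epsilon)/\log(\epsilon^{-1})$ evaluated on cells — and control its discrete Dirichlet energy using the fact, proven in~\cite[Theorem~1.4]{gms-harmonic}, that the discrete Dirichlet energy of a function on $\mcl G$ pushed forward from a smooth function on $\BB C$ is comparable (up to polylog, or even exactly up to constants in the relevant regime) to the continuum Dirichlet energy $\int |\nabla f|^2$; here $\int |\nabla (\log|z|/\log\epsilon^{-1})|^2 \asymp 1/\log\epsilon^{-1}$ over the annulus, giving resistance $\gtrsim \log\epsilon^{-1}$. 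For the \emph{upper} bound on resistance (lower bound on energy), I would use the flow/Thomson representation~\eqref{eqn-thomson} together with the other direction of the \cite{gms-harmonic} energy comparison, or alternatively combine the lower bound on $\op{Gr}_{\sigma}$ coming from~\cite[Theorem~1.4]{gms-harmonic} (which the text already says yields the lower bound in Theorem~\ref{thm-green}) — effectively, $\log$-many ``annular crossings'' each contribute $\Theta(1)$ to the resistance by series law, using that a macroscopic annulus contains order-one LQG-mass cells crossing it.

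The second main step is to pass from Euclidean balls to graph-distance balls. For this I need two-sided comparisons between $\mcl B_r^{\mcl G}(0)$ and Euclidean balls $B_\epsilon$, with high probability, where $\epsilon$ and $r$ are matched by the LQG/graph-distance exponent. The relevant inputs are: (i) the ball volume estimates and the relation $\mu_h(B_{R_b}) \asymp b$ together with the scaling relation~\eqref{eqn-cone-scale} and Lemma~\ref{lem-cone-hit-tail}, which control how Euclidean radius relates to LQG mass; and (ii) estimates from~\cite{ghs-map-dist} (or \cite{gms-harmonic}) comparing graph distance in $\mcl G$ to Euclidean distance in the embedding, showing that $\mcl B_r^{\mcl G}(0)$ is sandwiched between $B_{\epsilon_1}$ and $B_{\epsilon_2}$ with $\log\epsilon_1^{-1}$ and $\log\epsilon_2^{-1}$ both $\asymp \log r$ with probability $1 - O_r((\log r)^{-\alpha})$. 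Since effective resistance is monotone under adding/removing vertices to the ``target set'' $V$ (shorting/cutting), sandwiching the target set between two Euclidean-ball target sets immediately sandwiches $\mcl R^{\mcl G}(0 \leftrightarrow \bdy\mcl B_r^{\mcl G}(0))$ between the two Euclidean resistances, which are each $\asymp \log r$ by Step~1. I should be slightly careful that $\bdy\mcl B_r^{\mcl G}(0)$ (vertices \emph{in} the ball adjacent to the complement) versus the complement itself give the same resistance — this is standard, since a random walk hits $\bdy\mcl B_r$ iff it hits the complement, so $\mcl R(0 \leftrightarrow \bdy\mcl B_r^{\mcl G}(0)) = \mcl R(0 \leftrightarrow \mcl V\mcl G \setminus \mcl B_r^{\mcl G}(0))$.

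A technical point throughout is the use of the scale invariance~\eqref{eqn-cone-scale}: rather than proving the Euclidean-ball resistance estimate for all $\epsilon$ directly, I would prove it at one macroscopic scale (say comparing $B_1$ to $B_{1/2}$, getting an $\Theta(1)$ resistance across that dyadic annulus with good probability) and then chain $\Theta(\log\epsilon^{-1})$ such annular estimates together using the series law for resistance, taking a union bound over the $\Theta(\log\epsilon^{-1})$ scales. The probabilities in Lemma~\ref{lem-cone-hit-tail} and in the \cite{gms-harmonic} estimates decay fast enough (stretched-exponentially in the number of scales, or polynomially per scale) that the union bound over $\Theta(\log\epsilon^{-1})$ scales leaves an error of the form $O((\log r)^{-\alpha})$. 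The hardest part, I expect, will be making the energy-comparison argument of Step~1 fully rigorous in both directions: the upper bound on resistance needs a genuine lower bound on the Dirichlet energy of the \emph{actual} discrete harmonic function (not a test function), which is where one must invoke the quantitative harmonic-function estimates of~\cite{gms-harmonic} rather than just Dirichlet's principle with a convenient comparison function, and one must ensure the estimates hold uniformly enough across scales to survive the union bound.
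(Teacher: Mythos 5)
The proposal captures the paper's high-level plan: first prove up-to-constants bounds for $\mcl R^{\mcl G^\ep}\left(0 \leftrightarrow \mcl V\mcl G^\ep(\bdy B_\rho)\right)$ under the a priori SLE/LQG embedding, then transfer to graph-distance balls by comparing Euclidean and graph balls (the paper uses Lemma~\ref{lem-max-cell-diam} for one containment and an LQG volume bound for the other, and exploits $\mcl G^\ep \eqD \mcl G$ rather than a literal sandwich, but this is morally the same as your Step~2). Your treatment of the \emph{lower} bound on resistance — an upper bound on the harmonic function's Dirichlet energy, obtained either from a $\log$ test function or directly from~\cite[Theorem~1.4]{gms-harmonic} — matches the paper's Proposition~\ref{prop-eff-res-lower}.

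However, your treatment of the \emph{upper} bound on resistance contains a genuine gap, and parts of it go in the wrong direction. You invoke Thomson's principle~\eqref{eqn-thomson}, which is the right tool, but you offer no concrete unit flow and no mechanism for controlling its energy. The paper's crucial technical input is the specific flow $\theta^\ep$ defined as the net probability that a path $P^\ep$ along a uniformly-random radial segment from $0$ to $\bdy B_\rho$ traverses a given edge; the point is that $|\theta^\ep(e)|$ is then bounded by $|\eta(x)|^{-1}\op{diam}(\eta([x-\ep,x]))$, which feeds directly into the cell-sum estimate~\cite[Lemma~3.1]{gms-harmonic}, and this estimate is then chained over scales via~\eqref{eqn-cone-scale} and Lemma~\ref{lem-cone-hit-tail}. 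None of this appears in your sketch. Your proposed ``alternative'' — combining the lower bound on $\op{Gr}_{\sigma}$ from~\cite[Theorem~1.4]{gms-harmonic} with a series-law count of annular crossings — is in the wrong direction: both a lower bound on the Green's function and the series law for nested annuli produce \emph{lower} bounds on resistance, not upper bounds. Your other suggestion, namely directly lower-bounding the Dirichlet energy of the \emph{actual} harmonic function using~\cite{gms-harmonic}, is indeed a viable route (it is what the authors did in an earlier draft, as noted in Remark~\ref{remark-asaf}), but the paper explicitly calls that argument more complicated, and your sketch does not explain how one extracts such a lower bound from the harmonic-function estimates of~\cite{gms-harmonic}. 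Until one of these two correct-direction arguments is made precise, the upper half of~\eqref{eqn-sg-eff-res} is not established.
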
 
 
Proposition~\ref{prop-sg-eff-res} is the only result from this section which is needed in Section~\ref{sec-discrete}. 
The proof of Proposition~\ref{prop-sg-eff-res} uses the relationship between $\mcl G$ and SLE-decorated LQG, as explained in Section~\ref{sec-peanosphere}, together with the bounds for harmonic functions on $\mcl G$ from~\cite{gms-harmonic}.

\subsection{Setup and outline}
\label{sec-eff-res-setup}

Throughout this section we will consider the following setup.
Fix $\gamma \in (0,2)$ and
let $Z = (L,R)$ be the correlated Brownian motion as in~\eqref{eqn-bm-cov} used to define the mated-CRT map $\mcl G$ with this choice of $\gamma$. 
It will be convenient to consider a collection $\{\mcl G^\ep\}_{\ep > 0}$ of graphs with the same law as $\mcl G$, all coupled with $Z$, defined as follows. The vertex set of $\mcl G^\ep$ is $\ep\BB Z$, and two vertices $x_1,x_2\in  \ep\BB Z$ with $x_1<x_2$ connected by an edge if and only if
\eqb  \label{eqn-inf-adjacency-ep}
\left( \inf_{t\in [x_1- \ep , x_1]} L_t \right) \vee \left( \inf_{t\in [x_2- \ep , x_2]} L_t \right) \leq \inf_{t\in [x_1  , x_2 - \ep]} L_t 
\eqe 
or the same holds with $R$ in place of $L$. In other words, $\mcl G^\ep$ is defined in the same manner as $\mcl G$ but with the Brownian motion $t\mapsto \ep^{ 1/2} Z_{ t / \ep}$ in place of $Z$. Note that $\mcl G = \mcl G^1$ and (by Brownian scaling) for every $\ep > 0$, $\mcl G^\ep$ agrees in law with $ \mcl G$ viewed as a graph with a total ordering on its vertices.
 
Let $(h,\eta)$ be the pair consisting of the circle-average embedding of a $\gamma$-quantum cone and an independent whole-plane space-filling SLE$_{\kappa}$ with $\kappa =16/\gamma^2$ which is determined by $Z$ via~\cite[Theorem 1.11]{wedges}, as explained in Section~\ref{sec-peanosphere}. 
Then two vertices $x_1,x_2\in\mcl V\mcl G^\ep = \ep\BB Z$ are connected by an edge if and only if the corresponding cells $\eta([x_1-\ep ,x_1])$ and $\eta([x_2-\ep , x_2])$ share a non-trivial boundary arc.  
 
For a set $D\subset \BB C$, we define $\mcl G^\ep(D)$ to be the sub-graph of $\mcl G^\ep$ with vertex set
\eqb \label{eqn-sg-domain}
\mcl V\mcl G^\ep(D) := \left\{ x\in \ep\BB Z  : \eta([x-\ep , x]) \cap D\not=\emptyset \right\}   
\eqe
with two vertices joined by an edge if and only if they are joined by an edge in $\mcl G^\ep$. See Figure~\ref{fig-structure-graph-restrict} for an illustration.

\begin{figure}[ht!]
 \begin{center}
\includegraphics[scale=.75]{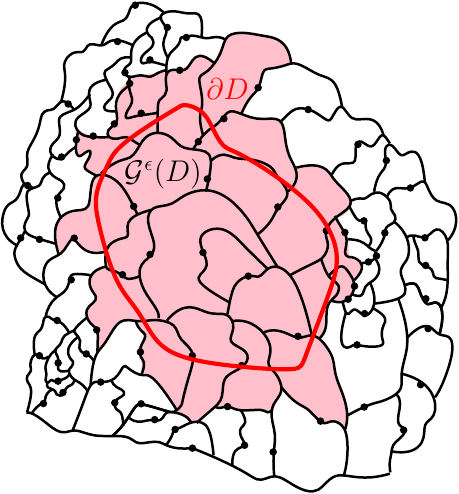} 
\caption{For a set $D\subset \BB C$ (here shown as the region enclosed by the red curve), the sub-graph $\mcl G^\ep(D)$ is the graph of cells which intersect $D$ (light red), with two such cells connected by an edge if and only if they share a non-trivial boundary arc. 
}\label{fig-structure-graph-restrict}
\end{center}
\end{figure}
 
We will make frequent use of the following upper bound for the maximal Euclidean diameter of the cells of $\mcl G^\ep$ which intersect a Euclidean ball of fixed radius, which is~\cite[Lemma~2.4]{gms-harmonic}.

\begin{lem}[\cite{gms-harmonic}] \label{lem-max-cell-diam}
For each $q\in \left( 0 , \tfrac{2}{(2+\gamma)^2} \right)$, there exists $\alpha=\alpha(q ,\gamma ) > 0$ such that for each fixed $\rho\in (0,1)$,  
\eqbn
\BB P\left[ \max_{x\in\mcl V\mcl G^\ep(B_\rho)} \op{diam}\left( \eta([ x-\ep , x]) \right) \leq \ep^q \right] = 1 - O_\ep(\ep^\alpha)  \quad \op{as} \quad \ep \to 0.
\eqen
\end{lem}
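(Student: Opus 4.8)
The plan is to work entirely inside the SLE/LQG picture of Section~\ref{sec-peanosphere}, where the cells of $\mcl G^\ep$ are the sets $\eta([x-\ep,x])$, $x\in\ep\BB Z$, each of which has $\mu_h$-mass exactly $\ep$. Fix $q<\tfrac{2}{(2+\gamma)^2}$ and set $\delta=\ep^q$. A cell meets $B_\rho$ only if it contains some $p\in B_\rho$, and such a cell of Euclidean diameter $>\delta$ must (being connected and containing $p$ together with a point at distance $\geq\delta/2$ from $p$) contain a connected set which crosses the Euclidean annulus $A(p;\delta/4,\delta/2)$. So the first step is to reduce, by a union bound over a grid of $\asymp\delta^{-2}$ points $z\in(\delta/100)\BB Z^2\cap B_1$ (moving $p$ to the nearest grid point $z$ and shrinking the annulus accordingly), to the deterministic-location statement: with probability $1-O_\ep(\ep^{c})$ for some $c>0$, no single cell of $\mcl G^\ep$ contains a connected subset crossing $A(z;\delta/8,\delta/4)$.

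The heart of the matter is a lower bound for the $\gamma$-LQG mass of a connected set crossing an annulus of scale $\delta$. I would show that, with probability $1-O_\delta(\delta^{c'})$ and uniformly over $z\in B_1$, every connected set crossing $A(z;\delta/8,\delta/4)$ has $\mu_h$-mass at least $\delta^{(2+\gamma)^2/2+\zeta}$ for a small fixed $\zeta>0$; since $\tfrac{(2+\gamma)^2}{2}+\zeta<\tfrac1q$ when $\zeta$ is small, this mass strictly exceeds $\delta^{1/q}=\ep$, contradicting the fact that every cell has $\mu_h$-mass exactly $\ep$. The exponent $\tfrac{(2+\gamma)^2}{2}$ is precisely the one governing the least possible LQG mass of a Euclidean ball of radius $\delta$ in a compact set: one has $\mu_h(B_\delta(z))\asymp\delta^{2+\gamma^2/2}e^{\gamma h_\delta(z)}$ with $\min_{z\in B_1}h_\delta(z)\approx-2\log(1/\delta)$, so in the worst case $\mu_h(B_\delta(z))\gtrsim\delta^{2+\gamma^2/2+2\gamma}=\delta^{(2+\gamma)^2/2}$, and this bottleneck is exactly what pins the admissible range $q<\tfrac{2}{(2+\gamma)^2}$. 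Quantitatively, such a uniform lower bound comes from the negative-moment estimates for the $\gamma$-LQG measure (Kahane's theory, cf.\ \cite{rhodes-vargas-review}, together with the comparison of the quantum-cone field on $\BB D$ to a whole-plane GFF recorded in Section~\ref{sec-lqg-prelim}), applied at each point of a fine grid and chained together using Hölder continuity of $z\mapsto\mu_h(B_\delta(z))$; the $\gamma\log|\cdot|$ singularity of the cone field at $0$ only decreases the measure away from $0$ and, since $\rho<1$, can be excised or handled by a separate easy estimate.

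The step I expect to be the main obstacle is the passage from ``crossing of a scale-$\delta$ annulus'' to ``has $\mu_h$-mass $\gtrsim\delta^{(2+\gamma)^2/2+\zeta}$'' with a genuinely polynomial-in-$\ep$ (not merely $\ep^{o_\ep(1)}$) failure probability: a crossing of an annulus need not contain a Euclidean ball of radius comparable to $\delta$, so one cannot simply quote a ball-mass lower bound. One must instead use that the crossing is a sub-piece of a space-filling-SLE cell, which has positive Lebesgue measure --- concretely, locating inside the crossing a point at radius $\asymp\delta$ from the annulus centre around which the cell occupies a definite Euclidean ball, or iterating the crossing estimate down scales and applying the ball-mass bound there. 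Arranging that the error probabilities multiply correctly through the $\asymp\delta^{-2}$ union bound, the chaining, and this last geometric step --- so that the final estimate reads $1-O_\ep(\ep^\alpha)$ with $\alpha>0$ --- is where the real work lies; this is exactly what is carried out in \cite[Lemma~2.4]{gms-harmonic}.
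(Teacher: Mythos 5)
The paper does not actually prove this lemma; it quotes it verbatim from \cite[Lemma~2.4]{gms-harmonic}, so there is no in-paper argument to compare against. Your reconstruction of the cited proof is substantively on the mark: you identify the correct critical exponent $\tfrac{(2+\gamma)^2}{2}$ as governing the worst-case LQG mass of a Euclidean ball of radius $\delta$ (via $\mu_h(B_\delta(z))\approx\delta^{2+\gamma^2/2}e^{\gamma h_\delta(z)}$ and the fact that circle averages over a bounded region dip no lower than $\approx -2\log\delta^{-1}$), and you correctly flag the real technical obstacle --- namely that an annulus crossing alone carries no LQG-mass lower bound, since a curve-like connected set has zero $\mu_h$-mass. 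Two small remarks. First, the annulus-crossing reduction is a detour relative to the cited argument, which goes more directly: by a pure space-filling-SLE estimate (essentially \cite[Proposition~3.4]{ghm-kpz}, which you also use in this paper's Lemma~\ref{lem-flow-path-tiny}), each cell of $\mcl G^\ep$ meeting $B_\rho$ contains, with polynomially high probability, a Euclidean ball of radius comparable (up to a small power) to its diameter; combined with a uniform negative-moment lower bound for $\mu_h$ on such balls, this converts a diameter lower bound directly into a mass lower bound exceeding $\ep$ whenever $q<\tfrac{2}{(2+\gamma)^2}$, with no need to grid, chain, or locate crossings. Second, your parenthetical that the $-\gamma\log|\cdot|$ singularity ``decreases the measure away from $0$'' is reversed in the relevant regime: on $B_\rho\subset\BB D$ the term $-\gamma\log|z|$ is positive, so it \emph{raises} the field and hence the measure, which is harmless for a lower bound --- the singularity needs no excision here. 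Neither point is a gap; both the exponent and the structure of the argument are those of the cited lemma.
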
 

To prove Proposition~\ref{prop-sg-eff-res}, we start by considering a fixed radius $\rho \in (0,1)$ and proving up-to-constants lower and upper bounds for the effective resistance $\mcl R^{\mcl G^\ep}(0 \leftrightarrow \mcl V\mcl G^\ep(\bdy B_\rho))$ from $0 \in \mcl V\mcl G^\ep$ to the set $\mcl V\mcl G^\ep(\bdy B_\rho)$ of vertices corresponding to the cells which intersect the boundary of the Euclidean ball of radius $\rho$ centered at 0.  

The lower bound for effective resistance to $\bdy B_\rho$ (Proposition~\ref{prop-eff-res-lower}) is just a re-statement of~\cite[Theorem 1.4]{gms-harmonic}. 
The proof of the corresponding upper bound (Proposition~\ref{prop-eff-res-upper}) in Section~\ref{sec-eff-res-upper} (which is more important for our purposes) will be proven by using~\eqref{eqn-thomson} and a multi-scale argument to get an upper bound for the energy of a certain unit flow (this argument is outlined just after the proposition statement).

In Section~\ref{sec-ball-eff-res}, we deduce Proposition~\ref{prop-eff-res-lower} from our estimates for the effective resistance to the boundary of a Euclidean ball using that $\mcl G^\ep \eqD \mcl G$ (as graphs with an ordering on the vertices) and that with high probability 
\eqbn
\mcl V\mcl B_{\ep^{-q_1}}^{\mcl G^\ep}(0)  \subset \mcl V\mcl G^\ep(B_\rho) \subset \mcl V\mcl B_{\ep^{-q_2}}^{\mcl G^\ep}(0)   
\eqen
for constants $q_1,q_2 > 0$ depending only on $\gamma$.

Throughout this section, for $0  < a < b < \infty$ and $z\in\BB C$, we define the open annulus
\eqb \label{eqn-annulus-def}
\BB A_{a,b}(z) := B_b(z) \setminus \ol{B_a(z)} .
\eqe
We also declare that $\BB A_{0,b}(z)$ is the punctured disk $B_b(z) \setminus \{z\}$.
We abbreviate $\BB A_{a,b} := \BB A_{a,b}(0)$.

\subsection{Lower bound for effective resistance to $\bdy B_\rho$}
\label{sec-eff-res-lower}

We record for reference the following lower bound for the effective resistance in $\mcl G^\ep$ from $0$ to the boundary of the Euclidean ball $B_\rho$, which is proven in~\cite{gms-harmonic}.

\begin{prop} \label{prop-eff-res-lower} 
There exists $\alpha = \alpha(\gamma) > 0$ such that for each $\rho \in (0,1)$, there exists $C = C( \rho,\gamma) > 0$ such that for $\ep\in (0,1)$,  
\eqb \label{eqn-eff-res-lower} 
\BB P\left[   \mcl R^{\mcl G^\ep}\left(0 \leftrightarrow \mcl V\mcl G^\ep( \bdy B_\rho )  \right)  \geq \frac{1}{C} \log \ep^{-1}  \right] = 1 - O_\ep(\ep^\alpha)
\eqe 
at a rate depending only on $\rho$ and $\gamma$.
\end{prop}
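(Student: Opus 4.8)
The plan is to derive Proposition~\ref{prop-eff-res-lower} from~\cite[Theorem~1.4]{gms-harmonic}, with only a short translation argument; as the surrounding text indicates, it is essentially a restatement of that result. First I would reduce the lower bound on effective resistance to an upper bound on a Dirichlet energy. Let $\frk f_V$ be the function from~\eqref{eqn-eff-res-dirichlet} with $V := \mcl V\mcl G^\ep(\bdy B_\rho)$, i.e.\ $\frk f_V(0) = 1$, $\frk f_V \equiv 0$ on $V$, and $\frk f_V$ discrete harmonic at every other vertex of $\mcl G^\ep$. By Dirichlet's principle,
\eqb
\mcl R^{\mcl G^\ep}\left(0 \leftrightarrow \mcl V\mcl G^\ep(\bdy B_\rho)\right) = \frac{1}{\op{Energy}\left(\frk f_V ; \mcl G^\ep\right)} ,
\eqe
so~\eqref{eqn-eff-res-lower} is equivalent to the statement that $\op{Energy}(\frk f_V ; \mcl G^\ep) \leq C(\log\ep^{-1})^{-1}$ with probability $1 - O_\ep(\ep^\alpha)$, which is the content of~\cite[Theorem~1.4]{gms-harmonic}.

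For orientation, the reason this energy bound holds is the continuum computation: the harmonic function on a Euclidean annulus $\BB A_{\delta,\rho}$ equal to $1$ on $\bdy B_\delta$ and $0$ on $\bdy B_\rho$ has Dirichlet energy $2\pi/\log(\rho/\delta)$. Here the root cell $\eta([-\ep,0])$ plays the role of the inner disk $B_\delta$; by Lemma~\ref{lem-max-cell-diam} it has Euclidean diameter $\leq \ep^q$ with probability $1 - O_\ep(\ep^\alpha)$, so the effective inner scale satisfies $\delta \asymp \ep^q$, giving $\log(\rho/\delta)\asymp\log\ep^{-1}$ and hence an energy of order $(\log\ep^{-1})^{-1}$. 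Converting this heuristic into a rigorous bound on the discrete Dirichlet energy of $\frk f_V$ — that is, transferring the continuum estimate to $\mcl G^\ep$ under its a priori space-filling-SLE embedding, via SLE/LQG comparisons between the cells $\eta([x-\ep,x])$ and Euclidean balls — is precisely the work carried out in~\cite{gms-harmonic}, which I would treat as a black box.

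The only points to check when quoting~\cite[Theorem~1.4]{gms-harmonic} are cosmetic: that the boundary set used there agrees with $\mcl V\mcl G^\ep(\bdy B_\rho)$ up to the outermost layer of cells (which changes $\op{Energy}(\frk f_V;\mcl G^\ep)$, and hence $\mcl R$, by at most a multiplicative constant), and that there the exponent $\alpha$ can be taken to depend only on $\gamma$ while $C$ depends only on $\rho$ and $\gamma$, with the error bound holding uniformly in $\ep\in(0,1)$. I do not expect a genuine obstacle at this step; the substantive difficulty of Section~\ref{sec-eff-res} is the complementary \emph{upper} bound on the effective resistance, Proposition~\ref{prop-eff-res-upper}, which is where the multi-scale unit-flow argument is needed.
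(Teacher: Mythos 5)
Your proposal is correct and takes essentially the same route as the paper, whose proof of Proposition~\ref{prop-eff-res-lower} is a one-line citation of~\cite[Theorem~1.4]{gms-harmonic} together with the definition of effective resistance. The only cosmetic difference is that you pass through Dirichlet's principle~\eqref{eqn-eff-res-dirichlet} while the paper invokes the Green's-function formulation~\eqref{eqn-eff-res-def}; these are equivalent, so the argument is the same.
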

\begin{proof}
This follows from~\cite[Theorem 1.4]{gms-harmonic} and the definition~\eqref{eqn-eff-res-def} of effective resistance.
\end{proof}

\subsection{Upper bound for effective resistance to $\bdy B_\rho$}
\label{sec-eff-res-upper}

To complement Proposition~\ref{prop-eff-res-upper}, in this subsection we prove the following upper bound for the effective resistance from $0$ to $\bdy B_\rho$ in $\mcl G^\ep$.

\begin{prop} \label{prop-eff-res-upper}
There exists $\alpha=\alpha(  \gamma) > 0$ and $C = C(\rho,\gamma) > 0$ such that for $\ep\in (0,1)$, the effective resistance from $0$ to $\mcl V\mcl G^\ep(\bdy B_\rho)$ satisfies
\eqb \label{eqn-eff-res-upper}
\BB P\left[ \mcl R^{\mcl G^\ep}\left( 0 \leftrightarrow \mcl V(\bdy B_\rho )   \right) \leq  C \log \ep^{-1} \right] = 1 - O_\ep\left( \frac{1}{(\log \ep^{-1})^\alpha} \right) .
\eqe  
\end{prop}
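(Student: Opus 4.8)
The plan is to prove the upper bound by exhibiting a unit flow from $0$ to $\mcl V\mcl G^\ep(\bdy B_\rho)$ whose energy is $O(\log \ep^{-1})$ with the required probability, and then invoking Thomson's principle~\eqref{eqn-thomson}. The natural candidate for the flow is the discretization of the Euclidean harmonic flow on the annulus $\BB A_{s,\rho}$, i.e.\ the gradient flow of $z \mapsto \log(\rho/|z|)/\log(\rho/s)$, which in the continuum has energy $2\pi/\log(\rho/s)$. The difficulty is that near the origin the curve $\eta$ has a single cell $\eta([-\ep,0])$ but the continuum flow is supported on a whole neighborhood of $0$, so one cannot directly ``push'' a unit of current into the origin along a Euclidean vector field. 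This is why a \emph{multi-scale} construction is needed: I will build the flow annulus-by-annulus on dyadic (or $\ep^\delta$-spaced) scales $\rho = \rho_0 > \rho_1 > \cdots > \rho_N$ with $\rho_N$ of order a power of $\ep$, route current across each dyadic annulus using a discretized harmonic flow, and then at the innermost scale $\rho_N$ use the fact that $\mcl B_{\ep^{-q}}^{\mcl G^\ep}(0)$ (which has only polynomially many vertices in $\ep^{-1}$ and lies inside $B_{\rho_N}$ with high probability by Lemma~\ref{lem-max-cell-diam}) has effective resistance $O(\log \ep^{-1})$ to $\mcl V\mcl G^\ep(\bdy B_{\rho_N})$, or equivalently that one can spread the unit of current out from the origin to the cells meeting $\bdy B_{\rho_N}$ at bounded energy cost times the number of scales.

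The key steps, in order: (1) Fix a small $\delta = \delta(\gamma) > 0$ and the scales $\rho_k = \rho \, 2^{-k}$ for $k = 0, \dots, N$ with $N \asymp \delta \log \ep^{-1}$, stopping at $\rho_N \asymp \ep^\delta$. (2) On each annulus $\BB A_{\rho_{k+1},\rho_k}$, apply the transfer result from~\cite{gms-harmonic} — the same Dirichlet-energy bound~\cite[Proposition~4.4]{gms-tutte} that underlies Proposition~\ref{prop-eff-res-lower} — to the Euclidean harmonic function $\frk g_k(z) = \log(\rho_k/|z|)/\log(\rho_k/\rho_{k+1}) = \log(\rho_k/|z|)/\log 2$ interpolating between $0$ on $\bdy B_{\rho_k}$ and $1$ on $\bdy B_{\rho_{k+1}}$; its continuum Dirichlet energy is $2\pi/\log 2$, a constant, and the result gives that with probability $1 - O_\ep(\ep^\alpha)$ the discrete harmonic function on $\mcl V\mcl G^\ep(\BB A_{\rho_{k+1},\rho_k})$ with the same boundary data has Dirichlet energy $\preceq 1$. (3) Convert each such discrete harmonic function into a flow of some total current $c_k$ across the annulus $\BB A_{\rho_{k+1},\rho_k}$: the flow $\theta_k(x,y) := \frk h_k(x) - \frk h_k(y)$ along edges has divergence supported on the two boundary layers $\mcl V\mcl G^\ep(\bdy B_{\rho_k})$ and $\mcl V\mcl G^\ep(\bdy B_{\rho_{k+1}})$, with total out-flow $c_k := \op{Energy}(\frk h_k; \mcl G^\ep(\BB A_{\rho_{k+1},\rho_k}))$, and energy $\sum_e \theta_k(e)^2 \leq (\max_e |\theta_k(e)|^2) \cdot (\text{something})$ — more carefully, one normalizes so the total current is $1$ and the energy scales like $1/c_k$, hence $\preceq 1$. (4) Splice the normalized annular flows together: at the interface $\bdy B_{\rho_k}$ the out-flow of $\theta_{k-1}$ (suitably normalized to total current $1$) must be matched to the in-flow of $\theta_k$; to fix up the discrepancy between the two boundary-current profiles one routes a correction flow \emph{within} a slightly fattened annulus around $\bdy B_{\rho_k}$, again at $O(1)$ energy cost per scale using the same kind of harmonic-function estimate, since the cells there have Euclidean diameter $\ll \rho_k$ by Lemma~\ref{lem-max-cell-diam}. (5) At the innermost scale, handle $\mcl V\mcl G^\ep(B_{\rho_N})$: since $\rho_N \asymp \ep^\delta$, this subgraph is contained in $\mcl B_{\ep^{-q}}^{\mcl G^\ep}(0)$ for a suitable $q$ with high probability (or contains $\mcl B_{\ep^{-q'}}^{\mcl G^\ep}(0)$), and in any case one only needs a unit flow from $0$ to $\mcl V\mcl G^\ep(\bdy B_{\rho_N})$ of energy $O(\log \ep^{-1})$, which follows by the same telescoping down to the single cell at $0$, or more crudely from the trivial bound that effective resistance is at most graph distance times the max degree, after controlling the diameter of $\mcl G^\ep(B_{\rho_N})$. (6) Sum: the total flow is a unit flow from $0$ to $\mcl V\mcl G^\ep(\bdy B_\rho)$, and since energies add over the $N + 1 \asymp \delta \log \ep^{-1}$ disjoint annular regions, its energy is $\preceq N \preceq \log \ep^{-1}$. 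A union bound over the $O(\log \ep^{-1})$ scales of the events from Lemma~\ref{lem-max-cell-diam} and the Dirichlet-energy estimate, each of which fails with probability $O_\ep(\ep^\alpha)$, gives total failure probability $O_\ep((\log \ep^{-1}) \ep^\alpha) = O_\ep((\log \ep^{-1})^{-\alpha'})$ — in fact much smaller — as required.

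The main obstacle, I expect, will be step (4): the gluing of the per-scale flows. The discrete harmonic function on $\mcl V\mcl G^\ep(\BB A_{\rho_{k+1},\rho_k})$ produces a \emph{particular}, non-uniform current profile along $\bdy B_{\rho_k}$ and $\bdy B_{\rho_{k+1}}$, determined by the random geometry of the cells, and there is no reason these profiles match up across consecutive annuli. One needs a robust way to say that any two current profiles of the same total mass, supported on cells near $\bdy B_{\rho_k}$, can be interpolated by a flow of energy $O(1)$; this is where the bound on cell diameters (Lemma~\ref{lem-max-cell-diam}) and a local connectivity/isoperimetry input for the mated-CRT map near a circle (again from~\cite{gms-harmonic}) will be essential, and getting the constant to be uniform in $k$ (so the total is genuinely $O(N)$ rather than, say, $O(N \log N)$ or worse) requires care. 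An alternative that avoids some of this bookkeeping is to not normalize each annular flow to unit current but instead work directly with the single global discrete harmonic function $\frk f_s^\ep$ on $\mcl V\mcl G^\ep(\BB A_{s,\rho})$ for $s \asymp \ep^\delta$ — compare the deleted Lemma (commented out in the excerpt just after Proposition~\ref{prop-eff-res-lower}), which bounds $\op{Energy}(\frk f_s^\ep; \mcl G^\ep(B_\rho)) \preceq 1/\log(\ep^{-1} \vee s^{-1})$ — and then note that the flow $x \mapsto \frk f_s^\ep(x) - \frk f_s^\ep(y)$ along edges has total current equal to this energy, so after normalization its energy is $\preceq \log(\ep^{-1})$, plus a contribution $O(\log \ep^{-1})$ from routing inside $B_s$. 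In that formulation the multi-scale structure is hidden inside the proof of~\cite[Proposition~4.4]{gms-tutte}, and steps (2)–(4) collapse; the real remaining work is the innermost-scale bound (5) and checking the current-vs-energy identity. I would pursue that cleaner route first and fall back on the explicit dyadic construction only if the single-function estimate turns out not to be quite enough.
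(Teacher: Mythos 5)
Your proposal takes a genuinely different route from the paper, and unfortunately the core of it does not work as stated. The paper's proof also invokes Thomson's principle, but the unit flow $\theta^\ep$ it actually constructs is completely different from a discretized harmonic flow: the paper samples a uniform boundary point $\BB z \in \bdy B_\rho$, picks a simple path $P^\ep$ in the cells touching the segment $[0,\BB z]$, and sets $\theta^\ep(e)$ to be the signed probability that $P^\ep$ traverses $e$. The energy of this flow over $\BB A_{\ep^\beta,\rho}$ is bounded by $\sum_x |\eta(x)|^{-2}\op{diam}(\eta([x-\ep,x]))^2 \op{deg}(x)$, which is controlled via~\cite[Lemma~3.1]{gms-harmonic} applied to $f(z)=|z|^{-2}$ (Lemma~\ref{lem-flow-path-macro}); the quantum cone scaling~\eqref{eqn-cone-scale} is then iterated at the geometrically shrinking scales $b_n^\ep = \ep^{1-(1-\xi)^n}$ to cover $\BB A_{R_{T\ep},\rho}$ (Lemma~\ref{lem-flow-path-micro}), and the innermost ball $B_{R_{T\ep}}$ is handled by the trivial bound $|\theta^\ep(e)|\le 1$ together with a vertex-count estimate (Lemma~\ref{lem-flow-path-tiny}). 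Choosing $T=(\log\ep^{-1})^{1/4}$ gives the statement. This construction never needs to glue anything and never needs to \emph{lower}-bound a Dirichlet energy.

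The central error in your plan is a direction-of-inequality confusion in steps (2)--(3) and in your ``cleaner'' alternative. The transfer estimate you invoke (\cite[Proposition~4.4]{gms-tutte}, the one underlying Proposition~\ref{prop-eff-res-lower} and the commented-out lemma) produces an \emph{upper} bound on the discrete Dirichlet energy of the harmonic function $\frk h_k$ on each dyadic annulus. But the normalized flow $\theta_k/c_k$ you build from $\frk h_k$, where $c_k = \op{Energy}(\frk h_k)$ is both the total current and the un-normalized energy, has energy exactly $1/c_k = \mcl R(\text{across the annulus})$. An upper bound $c_k \preceq 1$ therefore gives $1/c_k \succeq 1$, i.e.\ a \emph{lower} bound on the per-annulus contribution, which is the wrong direction and would merely reprove Proposition~\ref{prop-eff-res-lower}. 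To make your construction work per annulus you would need a complementary \emph{lower} bound $c_k \succeq 1$ on the discrete Dirichlet energy, which is a different (and harder) input --- this is exactly the role of~\cite[Theorem~3.2]{gms-harmonic} mentioned in Remark~\ref{remark-asaf} as the engine of the earlier, more complicated proof. Your alternative at the end has the same problem: from $\op{Energy}(\frk f_s^\ep)\preceq 1/\log\ep^{-1}$ the normalized-flow energy is $\succeq \log\ep^{-1}$, not $\preceq$, so that route is circular.

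Two further gaps would remain even with the right per-scale input. First, the gluing in step (4) is not a technicality: the harmonic current profiles on consecutive annulus boundaries are genuinely different and depend on the random geometry, and interpolating between them at uniformly $O(1)$ cost across $\asymp \log\ep^{-1}$ scales would require an isoperimetric/connectivity input for the mated-CRT map near a circle that neither~\cite{gms-harmonic} nor Lemma~\ref{lem-max-cell-diam} supplies directly. The paper sidesteps this entirely by using a single global flow. Second, your step (6) union bound with per-scale failure probability ``$O_\ep(\ep^\alpha)$'' is too optimistic: the estimates from~\cite{gms-harmonic} are proved for annuli of macroscopic Euclidean size, and their error term degrades as the annulus shrinks toward the origin (this is exactly why the commented-out lemma only holds for $s \geq \ep^\beta$ directly, and why the paper's Lemma~\ref{lem-flow-path-micro} gives probability $1-O_T(T^{-\alpha})$ rather than $1-O_\ep(\ep^\alpha)$). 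One must rescale the quantum cone as in~\eqref{eqn-cone-scale}, and the accumulated failure probability is genuinely polylogarithmic, not $\ep^\alpha\log\ep^{-1}$ as you suggest. In summary: the random-path flow, the quantum cone scaling, and the trivial $|\theta^\ep(e)|\le 1$ bound at the innermost scale are exactly what lets the paper avoid all three obstacles at once, and some version of those ideas seems hard to avoid.
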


We emphasize that the probabilistic estimate in Proposition~\ref{prop-eff-res-upper} is $1-O_\ep( (\log \ep^{-1})^{-\alpha})$ in contrast to the $1- O_\ep(\ep^\alpha)$ in Proposition~\ref{prop-eff-res-lower}. This estimate is the source of all of the polylogarithmic bounds for unconditional probabilities in this paper. 
 
Recalling Thomson's principle~\eqref{eqn-thomson}, we seek a unit flow from 0 to $\mcl V\mcl G^\ep(\bdy B_\rho)$ in $\mcl V\mcl G^\ep(B_\rho)$ whose energy is at most a constant times $\log \ep^{-1}$. 
Let us now define the unit flow we will consider.  Let $\BB z$ be sampled uniformly from Lebesgue measure on $\bdy B_\rho$, independently from everything else, and let $S$ be the line segment from 0 to $\BB z$. For $\ep\in (0,1)$, choose (in some measurable manner) a simple path $P^\ep$ in $\mcl G^\ep(S)$ from $0$ to a vertex whose corresponding cell contains $\BB z$. For an oriented edge $e  $ of $\mcl G^\ep(B_\rho)$, let $\theta^\ep(e)$ be the probability that the path $P^\ep$ traverses $e$ in the forward direction, minus the probability that $P^\ep$ traverses $e$ in the reverse direction. 

\begin{lem}
The function $\theta^\ep$ defined just above is a unit flow from $0$ to $\mcl V\mcl G^\ep(\bdy B_\rho)$. 
\end{lem}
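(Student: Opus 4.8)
The plan is to check, straight from the definition in \eqref{eqn-thomson}, the two defining properties of a unit flow, the essential point being that $P^\ep$ is a \emph{simple} path. First I would work conditionally on $\mcl G^\ep$ (equivalently, on the pair $(h,\eta)$), so that the only remaining randomness is the uniform point $\BB z \in \bdy B_\rho$ together with any auxiliary randomness used to select $P^\ep$. For each realization of this randomness, let $\chi$ be the signed indicator of the oriented path $P^\ep$: $\chi(e) = 1$ if $P^\ep$ traverses $e$ forward, $\chi(e) = -1$ if it traverses $e$ backward, and $\chi(e) = 0$ otherwise. This is well defined because a simple path traverses each oriented edge at most once, and it is manifestly antisymmetric. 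By the definition in the lemma statement, $\theta^\ep(e) = \BB E[\chi(e)]$ (expectation over $\BB z$, with $\mcl G^\ep$ fixed), so $\theta^\ep$ is antisymmetric as well, and it remains only to compute its divergence at every vertex. Here I use that $0 \notin \mcl V\mcl G^\ep(\bdy B_\rho)$, which is what makes ``unit flow from $0$ to $\mcl V\mcl G^\ep(\bdy B_\rho)$'' meaningful; this holds with probability $1 - O_\ep(\ep^\alpha)$ since the cell $\eta([-\ep,0])$, which contains $0$, has diameter less than $\rho$ by Lemma~\ref{lem-max-cell-diam}, hence cannot meet $\bdy B_\rho$.

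The key elementary observation I would isolate is the following: for any simple path $(v_0,v_1,\dots,v_m)$ in a (multi)graph with $v_0 \neq v_m$, its signed indicator has divergence $+1$ at $v_0$, divergence $-1$ at $v_m$, and divergence $0$ at every other vertex. This is immediate from simplicity: an intermediate vertex $v_i$ with $0 < i < m$ occurs exactly once along the path, so exactly one edge of the path is oriented out of it and exactly one into it (and these are distinct edges, since $v_{i-1}\neq v_{i+1}$), giving divergence $1-1 = 0$; the endpoints $v_0$ and $v_m$ are each incident to exactly one edge of the path, oriented outward and inward respectively; and a vertex not on the path contributes $0$ trivially. Applying this to $\gamma = P^\ep$ — whose initial vertex is $0$ and whose terminal vertex $y$ has cell $\eta([y-\ep,y]) \ni \BB z \in \bdy B_\rho$, so that $y \in \mcl V\mcl G^\ep(\bdy B_\rho)$ by \eqref{eqn-sg-domain} and in particular $y \neq 0$ — shows that $\chi$ has divergence $1$ at $0$ and divergence $0$ at every vertex outside $\{0\} \cup \mcl V\mcl G^\ep(\bdy B_\rho)$, for every realization of the randomness. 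Taking expectations over $\BB z$ and interchanging the expectation with the (finite) sum over neighbors — justified by local finiteness of $\mcl G^\ep$ and $|\chi| \le 1$ — gives the corresponding divergence identities for $\theta^\ep$, and together with the antisymmetry already noted this is exactly the assertion that $\theta^\ep$ is a unit flow from $0$ to $\mcl V\mcl G^\ep(\bdy B_\rho)$.

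I do not expect any serious obstacle: the argument is routine bookkeeping. The only places where the hypotheses genuinely enter are the elementary divergence computation, which really does need $P^\ep$ to be \emph{simple} (a walk revisiting a vertex could carry nonzero divergence there), and the identification of the terminal vertex of $P^\ep$ as a vertex of the target set $\mcl V\mcl G^\ep(\bdy B_\rho)$ distinct from the source $0$ (the latter being why the small-cell event from Lemma~\ref{lem-max-cell-diam} is invoked). Everything else is linearity of expectation and the definition \eqref{eqn-sg-domain}.
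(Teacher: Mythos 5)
Your proof is correct and follows essentially the same route as the paper: fix $(h,\eta)$, observe that the signed indicator of the simple path $P^\ep$ has divergence $+1$ at the source, $-1$ at the sink, and $0$ elsewhere, then pass to the (conditional) expectation over $\BB z$ to get the divergence identities for $\theta^\ep$. The one refinement you add beyond the paper's argument is the explicit check that $0 \notin \mcl V\mcl G^\ep(\bdy B_\rho)$, so that $P^\ep$ is a nontrivial path and the source and sink are genuinely distinct; the paper takes this for granted (it holds off an event of probability $O_\ep(\ep^\alpha)$ by Lemma~\ref{lem-max-cell-diam}, and that exceptional event is absorbed into the probabilistic error bounds wherever the unit flow is actually used).
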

\begin{proof}
For each $x\in \mcl V\mcl G^\ep(B_\rho) \setminus \mcl V\mcl G^\ep(\bdy B_\rho)$, the number of oriented edges of the form $(x,y)$ traversed by $P^\ep$ is equal to the number of oriented edges of the form $(y,x)$ traversed by $P^\ep$, i.e., the net number of times that $P^\ep$ traverses an oriented edge with $x$ as an endpoint is 0, where edges pointing toward $x$ are counted with a positive sign and edges pointed away from $x$ are counted with a negative sign. The divergence of $\theta^\ep$ at $x$ is the conditional expectation of this net number of edges given $(h,\eta)$, so also equals $0$. On the other hand, the net number of times that the path $P^\ep$ traverses an edge with~$0$ as an endpoint (defined as above) is $1$. Therefore, the divergence at $0$ is $1$, so $\theta^\ep$ is in fact a unit flow.
\end{proof}

To prove Proposition~\ref{prop-eff-res-upper}, it suffices to prove an upper bound of $O_\ep(\log\ep^{-1})$ for the energy of the unit flow $\theta^\ep$. To do this, we first use the estimates for cells of $\mcl G^\ep$ from~\cite{gms-harmonic} to prove an upper bound for the energy of $\theta^\ep$ over the annulus $\BB A_{\ep^\beta , \rho}$ for a small (but $\ep$-independent) constant $\beta=\beta(\gamma) >0$ (Lemma~\ref{lem-flow-path-macro}). Intuitively, the reason why we do not get estimates for the energy within $B_{\ep^\beta}$ is that there are too few cells contained in $B_{\ep^\beta}$ for various large-scale averaging effects to hold. (This manifests itself, e.g., in the fact that the upper bound for the cell size in Lemma~\ref{lem-max-cell-diam} might be bigger than the size of the ball we are working with and the fact that the $\ep^\alpha$ error term in~\cite[Lemma 3.1]{gms-harmonic} dominates the main integral term when we are working in a small enough ball.)
 
To transfer from this energy bound at ``nearly macroscopic" scales to an energy bound which works at all scales, we will apply the scaling property of the $\gamma$-quantum cone described in Section~\ref{sec-lqg-prelim}. 
Via a multi-scale argument, this leads to an upper bound of $O_\ep(\log \ep^{-1})$ for the energy of $\theta^\ep$ over the complement of a Euclidean ball centered at 0 which contains of order $T > 1$ cells of $\mcl G^\ep$, which holds with probability $1-O_T(T^{-\alpha})$ (Lemma~\ref{lem-flow-path-micro}). 

We have an a priori bound for the energy of $\theta^\ep$ over a ball which contains $O_\ep( (\log \ep^{-1})^{1/4} )$ cells of $\mcl G^\ep$ (Lemma~\ref{lem-flow-path-tiny}), which allows us to take $T = (\log \ep^{-1})^{1/4}$ in the preceding estimate and thereby conclude the proof of Proposition~\ref{prop-eff-res-upper}. Note that the need to consider balls which contain a polylogarithmic number of cells of $\mcl G^\ep$ is the reason for the polylogarithmic probability bound in Proposition~\ref{prop-eff-res-upper}.

\begin{lem} \label{lem-flow-path-macro}
There exists $\alpha=\alpha(\gamma) >0$, $\beta = \beta(\gamma) > 0$, and $C=C(\rho,\gamma) > 0$ such that the energy of $\theta^\ep$ over $\mcl G^\ep(\BB A_{\ep^\beta,\rho})$ satisfies
\eqb \label{eqn-flow-path-macro} 
\BB P\left[ \sum_{e \in \mcl E\mcl G^\ep(\BB A_{\ep^\beta,\rho})} [\theta^\ep(e)]^2 \leq C \log \ep^{-1} \right] =  1-O_\ep(\ep^\alpha) .
\eqe 
\end{lem}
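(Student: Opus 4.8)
The plan is to bound the energy $\sum_{e} [\theta^\ep(e)]^2$ scale-by-scale over dyadic annuli $\BB A_{2^{-k-1}\rho, 2^{-k}\rho}$ for $k$ ranging from $0$ up to roughly $\beta \log_2 \ep^{-1}$, showing that each annulus contributes $O(1)$ to the energy, so that the total is $O(\log \ep^{-1})$. The key geometric input is that $\theta^\ep(e)$ is the net probability that the random path $P^\ep$ — which follows the line segment $S$ from $0$ to the uniform point $\BB z \in \bdy B_\rho$ — traverses the edge $e$. Because $S$ is essentially a radial segment, when restricted to a dyadic annulus $\BB A_k := \BB A_{2^{-k-1}\rho, 2^{-k}\rho}$ the path $P^\ep\cap \mcl G^\ep(\BB A_k)$ crosses the annulus ``from the inside to the outside'' and, conditionally on $(h,\eta)$, the law of which edges it uses is governed by the (angular) distribution of $\BB z$. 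The crucial point is that $\theta^\ep(e)$ is the \emph{average} over $\BB z$ of an indicator, and the flow through $\BB A_k$ is a unit flow across the annulus; so $\sum_{e \in \mcl E\mcl G^\ep(\BB A_k)} [\theta^\ep(e)]^2$ is bounded by the energy of \emph{some} unit crossing flow of $\mcl G^\ep(\BB A_k)$, which in turn is $\mcl R^{\mcl G^\ep}(\bdy_{\mathrm{in}}\BB A_k \leftrightarrow \bdy_{\mathrm{out}} \BB A_k)$ by Thomson's principle~\eqref{eqn-thomson}.

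First I would make this precise: fix a dyadic scale $k$, and observe that by the scaling property~\eqref{eqn-cone-scale} of the $\gamma$-quantum cone, the restriction of $(h,\eta)$ to the annulus $\BB A_k$, rescaled by $2^{k}/\rho$, agrees in law (up to a field shift and a corresponding change of $\ep$) with the unit-scale picture; this reduces the per-annulus estimate to a single ``crossing resistance'' bound for $\mcl G^{\ep'}$ across a fixed annulus $\BB A_{1/2,1}$, with $\ep' = 2^{k}\ep/\rho$, valid as long as $\ep' \leq \ep^{1-\beta'}$ say — i.e.\ as long as $k \leq \beta \log_2\ep^{-1}$ for suitable $\beta=\beta(\gamma)$. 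For such $\ep'$, the crossing resistance of a fixed annulus is $O(1)$ with probability $1 - O(\ep'^{\alpha})$: this follows from Lemma~\ref{lem-max-cell-diam} (cells are small, so the annulus contains many cells and the combinatorial structure is ``two-dimensional'') together with the Dirichlet-energy estimates of~\cite{gms-harmonic} — concretely, one exhibits an explicit unit crossing flow by routing mass along the image under $\eta$ of the Euclidean harmonic flow $\nabla \frk g$ for the log potential $\frk g(z) = \log(|z|)/\log 2$, exactly as in the (commented-out) proof of Lemma~\ref{lem-annulus-energy}, whose discretized Dirichlet energy is $O(1)$ by~\cite[Proposition~4.4]{gms-tutte}.

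Then I would sum: by a union bound over the $O(\log\ep^{-1})$ dyadic scales $k \in \{0, 1, \dots, \lfloor \beta\log_2\ep^{-1}\rfloor\}$, with probability $1 - \sum_k O((2^k\ep/\rho)^\alpha) = 1 - O_\ep(\ep^{\alpha(1-\beta)})$ (a geometric sum dominated by its largest term) every dyadic annulus $\BB A_k$ has crossing resistance $\leq C_0$, hence contributes at most $C_0$ to $\sum_{e\in\mcl E\mcl G^\ep(\BB A_k)}[\theta^\ep(e)]^2$, and therefore the total energy over $\BB A_{\ep^\beta,\rho} = \bigcup_k \BB A_k$ is at most $C_0 \cdot (\beta \log_2\ep^{-1} + 1) \leq C\log\ep^{-1}$. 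Renaming the exponent $\alpha(1-\beta)$ as $\alpha$ gives~\eqref{eqn-flow-path-macro}.

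The main obstacle I anticipate is the first step: correctly justifying that the energy of $\theta^\ep$ restricted to a dyadic annulus is bounded by the crossing resistance of that annulus. Two subtleties arise. First, $P^\ep$ restricted to $\mcl G^\ep(\BB A_k)$ need not be a \emph{single} crossing of $\BB A_k$ — the segment $S$ passes through $\BB A_k$ just once, but the cell path could conceivably wander in and out near the annulus boundaries — so one needs to argue, using Lemma~\ref{lem-max-cell-diam} (cell diameters $\ll 2^{-k}\rho$ off an event of probability $O(\ep'^\alpha)$), that on the good event $P^\ep\cap\mcl G^\ep(\BB A_k)$ consists of a controlled number of crossings, each carrying unit flux, so that $\theta^\ep|_{\BB A_k}$ is (a bounded number of copies of) a unit flow across $\BB A_k$, up to boundary terms that can be absorbed. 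Second, one must handle the edges straddling $\bdy B_{2^{-k}\rho}$ that are shared between consecutive annuli without double-counting in a way that breaks the bound — this is routine but needs care, and is the reason one works with the union $\bigcup_k \BB A_k = \BB A_{\ep^\beta,\rho}$ rather than treating the annuli as truly disjoint. I expect the cleanest route is to not literally decompose $\theta^\ep$ but rather to bound $\sum_{e\in\mcl E\mcl G^\ep(\BB A_k)}[\theta^\ep(e)]^2$ directly by the energy of a concrete near-optimal crossing flow via the variational characterization, exploiting that $\theta^\ep$ is an average of paths each of which crosses $\BB A_k$ roughly radially.
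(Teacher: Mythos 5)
Your central step—bounding $\sum_{e\in\mcl E\mcl G^\ep(\BB A_k)}[\theta^\ep(e)]^2$ by the crossing resistance of $\BB A_k$ via Thomson's principle—goes the wrong way. Thomson's principle~\eqref{eqn-thomson} says the effective resistance is the \emph{infimum} of energies over unit flows, so the energy of the particular flow $\theta^\ep$ restricted to $\BB A_k$ is bounded \emph{below} by the crossing resistance, not above. There is no variational principle that upper-bounds the energy of one specific flow by comparing it to a different, near-optimal flow; the energy of $\theta^\ep|_{\BB A_k}$ could a priori be far larger than the minimum. Your closing remark that one should ``bound it directly by the energy of a concrete near-optimal crossing flow via the variational characterization'' still faces this same obstruction: knowing that \emph{some} crossing flow has energy $O(1)$ says nothing about the energy of $\theta^\ep$. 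The fact that $\theta^\ep$ actually has small energy is a genuine property of the construction (it averages over a uniformly random endpoint $\BB z$, spreading mass across many cells) and must be established by a direct computation, not a comparison principle.

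The paper proceeds in exactly this direct way, with no dyadic decomposition at all for the macro-scale lemma. It observes that, conditionally on $(h,\eta)$, the probability that the segment $S$ hits a given cell $\eta([x-\ep,x])$ is at most a constant times $\op{diam}(\eta([x-\ep,x]))/\op{dist}(\eta([x-\ep,x]),0)$, which on the high-probability event that all cells meeting $B_\rho$ have diameter $\leq \ep^q$ (Lemma~\ref{lem-max-cell-diam}) gives the pointwise bound $|\theta^\ep(e)| \preceq |\eta(x)|^{-1}\op{diam}(\eta([x-\ep,x]))$ for any edge $e=(x,y)$. Squaring and summing over edges of $\mcl G^\ep(\BB A_{\ep^\beta,\rho})$ reduces the energy to $\sum_x |\eta(x)|^{-2}\op{diam}(\eta([x-\ep,x]))^2\op{deg}^{\mcl G^\ep}(x)$, which by~\cite[Lemma~3.1]{gms-harmonic} (with $f(z)=|z|^{-2}$) concentrates around the integral $\int_{\BB A_{\ep^\beta,\rho}}|z|^{-2}\,dz \asymp \log\ep^{-1}$. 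The $\log\ep^{-1}$ thus arises from the divergence of $\int |z|^{-2}$ near the origin, not from counting $O(\log\ep^{-1})$ scales. (A multi-scale / scaling argument does appear in the paper, but only afterwards in Lemma~\ref{lem-flow-path-micro}, to push the radius below $\ep^\beta$.) If you want to salvage a dyadic version of your argument, you would still need the pointwise bound on $|\theta^\ep(e)|$ just described to control the per-annulus energy; once you have it, the dyadic decomposition is unnecessary.
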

\begin{proof}
We will bound the energy of $\theta^\ep$ in terms of a sum over cells of $\mcl G^\ep$ which intersect $\BB A_{\ep^\beta,\rho}$, which can in turn be bounded using~\cite[Lemma~3.1]{gms-harmonic}.
Throughout the proof, we require all implicit constants in the symbol $\preceq$ to be deterministic and depend only on $\rho$ and $\gamma$.
Fix $q\in \left(0,\tfrac{2}{(2+\gamma)^2}\right)$, chosen in a manner depending only on $\gamma$ (as in Lemma~\ref{lem-max-cell-diam}). 

For each $x\in \mcl V\mcl G^\ep(B_\rho)$, the conditional probability given $(h,\eta)$ that the segment $S$ intersects the cell $\eta([x-\ep,x])$ is at most a constant times\\
 $\op{diam} \left(\eta([x-\ep,x]) \right) \times \op{dist}\left(\eta([x-\ep,x]) , 0\right)^{-1}$, where here $\op{diam}$ and $\op{dist}$ denote Euclidean diameter and distance, respectively. By Lemma~\ref{lem-max-cell-diam}, it holds except on an event of probability decaying faster than some positive power of~$\ep$ that each cell $\eta([x-\ep,x])$ has diameter at most $\ep^q$, so
\eqbn
\op{dist}\left(\eta([x-\ep,x]) , 0\right)^{-1} \leq \left( |\eta(x)| -  \ep^q \right)^{-1} \preceq    |\eta(x)|^{-1}  ,\quad \forall x \in \mcl V\mcl G^\ep(\BB A_{2\ep^q,\rho})  .
\eqen 
Consequently, it holds except on an event of probability decaying faster than some positive power of~$\ep$ that 
\eqb \label{eqn-flow-path-prob}
\BB P\left[x \in P^\ep \,|\, (h,\eta) \right] \preceq |\eta(x)|^{-1} \op{diam} \left(\eta([x-\ep,x]) \right)      , \quad \forall x \in \mcl V\mcl G^\ep(\BB A_{2\ep^q,\rho}) .
\eqe 
By the definition of $\theta^\ep$,~\eqref{eqn-flow-path-prob} implies that for each oriented edge $e = (x,y)$ of $\mcl G^\ep(\BB A_{2\ep^q,\rho})$, 
\eqbn
|\theta^\ep(e)| \preceq |\eta(x)|^{-1} \op{diam} \left(\eta([x-\ep,x]) \right) .
\eqen
Summing this estimate, we get that if $\beta \in (0,q)$, then whenever~\eqref{eqn-flow-path-prob} holds and $\ep$ is sufficiently small (depending on $\beta$ and $q$),
\allb \label{eqn-flow-path-sum}
\sum_{e \in \mcl E\mcl G^\ep(\BB A_{\ep^\beta,\rho})}[ \theta^\ep(e)]^2
 \preceq \sum_{x \in \mcl V\mcl G^\ep(\BB A_{\ep^\beta,\rho})}  |\eta(x)|^{-2}   \op{diam} \left(\eta([x-\ep,x]) \right)^2 \op{deg}^{\mcl G^\ep}(x) .
\alle

By~\cite[Lemma~3.1]{gms-harmonic}, applied with $f(z) = |z|^{-2}$, it follows that there exists constants $\alpha_0 = \alpha_0(\gamma) > 0$, $\beta_0 = \beta_0(\gamma)  > 0$, and $C_0 =C_0(\rho,\gamma) > 0$ such that with probability $1-O_\ep(\ep^{\alpha_0})$, the right side of~\eqref{eqn-flow-path-sum} is bounded above by
\eqbn
C_0 \int_{\BB A_{\ep^\beta,\rho}} |z|^{-2} \, dz    + \ep^\alpha \preceq \log \ep^{-1} .
\eqen
Since~\eqref{eqn-flow-path-sum} holds except on an event of probability decaying faster than some positive power of $\ep$, we obtain~\eqref{eqn-flow-path-macro} for an appropriate choice of $\alpha$, $\beta$, and $C$.
\end{proof}

We now use a multi-scale argument to transfer from Lemma~\ref{lem-flow-path-macro} to a bound on the energy of $\theta^\ep$ over all of $B_\rho$ except for a small ball centered at the origin which contains of order $T $ cells of $\mcl G^\ep$ for $T \in [1,1/\ep)$. 
For the statement and proof of the next lemma, we recall the radii $R_b \in (0,1]$ for $b\in (0,1]$ defined in~\eqref{eqn-mass-hit-time}, which are chosen so that $B_{R_b} $ typically has $\mu_h$-mass of order $b$.

\begin{lem} \label{lem-flow-path-micro}
There exists $\alpha=\alpha(\gamma) >0$ such that for each $\rho \in (0,1)$, we can find $C = C( \rho , \gamma ) > 0$ such that for each $\ep \in (0,1)$ and each $T\in [1,1/\ep)$, the following is true. If we let $R_{T\ep}$ be as in~\eqref{eqn-mass-hit-time} with $b=T\ep$, then  
\eqb \label{eqn-flow-path-micro}
\BB P\left[ \sum_{e \in \mcl E\mcl G^\ep(\BB A_{R_{T\ep} ,\rho})} [\theta^\ep(e)]^2 \leq C \log \ep^{-1} \right] = 1-O_T(T^{-\alpha}) ,
\eqe 
at a rate depending only on $\rho$ and $\gamma$. 
\end{lem}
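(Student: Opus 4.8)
The plan is to bootstrap the ``nearly macroscopic'' bound of Lemma~\ref{lem-flow-path-macro} up to all scales by means of the exact scale invariance~\eqref{eqn-cone-scale} of the $\gamma$-quantum cone, organized as a dyadic decomposition of the annulus $\BB A_{R_{T\ep},\rho}$ in terms of $\gamma$-LQG mass. For $j\in\BB N_0$ put $b_j := 2^j T\ep$, and let $J$ be a deterministic integer of order $\log\ep^{-1}$, chosen with a small additional margin so that, by Lemma~\ref{lem-cone-hit-tail}, the radius $R_{b_J}$ from~\eqref{eqn-mass-hit-time} satisfies $R_{b_J}\ge \rho$ with probability $1-O(T^{-\alpha})$. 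Since $b\mapsto R_b$ is increasing we then have $R_{T\ep} = R_{b_0}<R_{b_1}<\dots<R_{b_J}$, and $\BB A_{R_{T\ep},\rho}\subset\bigcup_{j=0}^{J-1}\BB A_{R_{b_j},R_{b_{j+1}}}$. Because $\theta^\ep$ is supported on the edges of $\mcl G^\ep(B_\rho)$ (the path $P^\ep$ stays in $\mcl G^\ep(S)$ and $S\subset\ol{B_\rho}$), partitioning the edges of $\mcl G^\ep(\BB A_{R_{T\ep},\rho})$ among these annuli according to which one their cells meet — edges straddling two consecutive annuli being absorbed by slightly enlarging the regions, exactly as in the proof of Lemma~\ref{lem-flow-path-macro} — gives
\eqbn
\sum_{e\in\mcl E\mcl G^\ep(\BB A_{R_{T\ep},\rho})}[\theta^\ep(e)]^2 \;\preceq\; \sum_{j=0}^{J-1}\;\sum_{e\in\mcl E\mcl G^\ep(\BB A_{R_{b_j},R_{b_{j+1}}})}[\theta^\ep(e)]^2 .
\eqen

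For the $j$-th summand I would apply the scale invariance~\eqref{eqn-cone-scale} with $b=b_j$: rescaling space by $R_{b_j}^{-1}$ yields a field with the same law as $h$, maps $\BB A_{R_{b_j},R_{b_{j+1}}}$ to the annulus $\BB A_{1,\,R_{b_{j+1}}/R_{b_j}}$ of inner radius exactly $1$, and — since $\eta$ is parameterized by $\mu_h$-mass and typically $\mu_h(B_{R_{b_j}})\asymp b_j$ — realizes the cells of $\mcl G^\ep$ meeting $B_{R_{b_j}}$ as the cells of a copy of $\mcl G^{\delta_j}$ with $\delta_j\to 0$ whose unit ball contains of order $b_j/\ep = 2^j T$ cells. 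The pointwise flow bound~\eqref{eqn-flow-path-prob} is invariant under this rescaling (both sides transform the same way, and its derivation only uses that $P^\ep$ lies near a line segment issuing from $0$), so re-running the computation of Lemma~\ref{lem-flow-path-macro} in the rescaled coordinates — using Lemma~\ref{lem-max-cell-diam} to control cell diameters and~\cite[Lemma~3.1]{gms-harmonic} to bound the resulting cell sum, now at unit scale — shows that, on an event of probability $1-O((2^j T)^{-\alpha})$,
\eqbn
\sum_{e\in\mcl E\mcl G^\ep(\BB A_{R_{b_j},R_{b_{j+1}}})}[\theta^\ep(e)]^2 \;\preceq\; \int_{\BB A_{R_{b_j},R_{b_{j+1}}}}|z|^{-2}\,dz + (2^j T)^{-\alpha} \;\asymp\; \log\frac{R_{b_{j+1}}}{R_{b_j}} + (2^j T)^{-\alpha} .
\eqen
Passing to the rescaled picture is what makes the argument go through: the $\ep^\alpha$-type error in~\cite[Lemma~3.1]{gms-harmonic} carries a constant that deteriorates as the underlying ball shrinks, and rescaling the ball of interest back to unit size is precisely what converts it into the harmless term $(2^j T)^{-\alpha}$.

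To finish, I would union bound over $j\in\{0,\dots,J-1\}$. The error terms sum to $\sum_{j\ge 0}(2^j T)^{-\alpha}\preceq T^{-\alpha}$, a convergent geometric series, while the main terms telescope,
\eqbn
\sum_{j=0}^{J-1}\log\frac{R_{b_{j+1}}}{R_{b_j}} = \log\frac{R_{b_J}}{R_{T\ep}} ,
\eqen
and a single application of Lemma~\ref{lem-cone-hit-tail}, with $b_1 = T\ep$ and $b_2 = b_J$ and with $C$ of order $\exp(\sqrt{\log\ep^{-1}\,\log T})$ so that its failure probability is $O(T^{-\alpha})$ (possible since $\log(b_2/b_1)\asymp\log\ep^{-1}$ and $T<\ep^{-1}$), bounds $\log(R_{b_J}/R_{T\ep})$ by $O(\log\ep^{-1})$ on the corresponding event. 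Combining the estimates yields~\eqref{eqn-flow-path-micro}. I expect the main obstacle to be the cross-scale bookkeeping in the middle step: one has to set up the change of variables so that the errors from~\cite[Lemma~3.1]{gms-harmonic} and Lemma~\ref{lem-max-cell-diam} at scale $j$ are genuinely of order $(2^j T)^{-\alpha}$ rather than acquiring extra (large and random) powers of the aspect ratio $R_{b_{j+1}}/R_{b_j}$ of the rescaled annulus. This should be handled by intersecting with the high-probability event — again furnished by Lemma~\ref{lem-cone-hit-tail} — on which every $R_{b_j}$ lies within an $\ep^{o_\ep(1)}$ factor of its typical value, so that the mass-dyadic blocks can in turn be subdivided into Euclidean annuli of bounded aspect ratio.
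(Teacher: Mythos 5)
Your high-level strategy---a multi-scale decomposition of $\BB A_{R_{T\ep},\rho}$ using the exact scale invariance~\eqref{eqn-cone-scale}, followed by a union bound over scales---is the same as the paper's. But there is a genuine gap in the scale-by-scale step, and the paper's choice of scales is designed precisely to avoid it.

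You decompose $\BB A_{R_{T\ep},\rho}$ into the dyadic-in-mass annuli $\BB A_{R_{b_j},R_{b_{j+1}}}$ and propose to bound each summand by rescaling by $R_{b_j}^{-1}$ so that the annulus becomes $\BB A_{1,\,R_{b_{j+1}}/R_{b_j}}$ and then ``re-running'' the argument of Lemma~\ref{lem-flow-path-macro}. The problem is that this rescaled annulus lies \emph{outside} $B_1$: its inner radius is $1$ and its outer radius is $R_{b_{j+1}}/R_{b_j}>1$. All the ingredients you want to invoke---Lemma~\ref{lem-max-cell-diam}, \cite[Lemma~3.1]{gms-harmonic}, and hence Lemma~\ref{lem-flow-path-macro} itself---are stated for regions contained in $B_\rho$ with $\rho<1$, because that is where the circle-average embedding of the $\gamma$-quantum cone agrees with a whole-plane GFF plus $-\gamma\log|\cdot|$ and the key measure estimates are available. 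Nor can you cover $\BB A_{R_{b_j},R_{b_{j+1}}}$ by a single application of Lemma~\ref{lem-flow-path-macro} at some scale $b$: applying it at $b=b_j$ bounds energy only out to $\rho R_{b_j}<R_{b_j}$, and applying it at $b=b_{j+1}$ bounds energy only out to $\rho R_{b_{j+1}}<R_{b_{j+1}}$, so in each case the outer end of your annulus is cut off. Your paragraph on ``cross-scale bookkeeping'' acknowledges a difficulty but does not identify or resolve this particular one, which is the crux.

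The paper sidesteps this entirely by choosing the scales $b_n^\ep = \ep^{1-(1-\xi)^n}$ with $\xi=\beta\gamma(Q-\gamma)/2$, so that the annuli produced by Lemma~\ref{lem-flow-path-macro} itself---namely $\BB A_{(\ep/b_n)^\beta R_{b_n},\,\rho R_{b_n}}$, which sit comfortably inside $B_{\rho R_{b_n}}$---\emph{overlap} with high probability (this is~\eqref{eqn-cone-hit-increment}) and together cover $[R_{T\ep},\rho]$ (this is~\eqref{eqn-cone-hit-cover}). Lemma~\ref{lem-flow-path-macro} is then applied as a black box at each scale, with no need to re-derive anything outside its stated domain, and the energy contributions $\log(b_n^\ep/\ep)=\log\ep^{-1}\,(1-\xi)^n$ sum geometrically to $O(\log\ep^{-1})$. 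Note also that this uses only $n_T^\ep=O(\log\log\ep^{-1})$ scales rather than your $O(\log\ep^{-1})$ dyadic scales, though since both union bounds are dominated by the small-$b$ end this is a secondary difference. Your argument could be repaired---for instance by rescaling each annulus to land inside $B_\rho$ and then extending \cite[Lemma~3.1]{gms-harmonic} to annuli of bounded aspect ratio near the unit circle---but as written the essential step is not justified, and the paper's choice of scales is exactly the device that makes the black-box application go through.
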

\begin{proof}
We will apply Lemma~\ref{lem-flow-path-macro} with $h(R_b\cdot) + Q\log R_b - \frac{1}{\gamma} \log b$ in place of $h$ and the scaling property~\eqref{eqn-cone-scale} of the $\gamma$-quantum cone for several particular $b \in (0,1]$ which interpolate between $1$ and $T\ep$, then take a union bound. The particular values of $b$ which we consider are chosen in~\eqref{eqn-annulus-path-b-choice} and are defined in such a way that with high probability, the corresponding intervals $[(\ep/b)^\beta R_b , R_b \rho]$ cover $[T\ep , \rho]$.  
To deduce~\eqref{eqn-flow-path-micro}, we then sum the estimate of Lemma~\ref{lem-flow-path-macro} over all of the scales.
\medskip

\noindent\textit{Step 1: Definition of an event at each scale.} Fix $\rho\in (0,1  )$ and let $\alpha_0 , \beta$, and $C_0$ be chosen so that the conclusion of Lemma~\ref{lem-flow-path-macro} is satisfied with $\alpha =\alpha_0$, $C = C_0$, and this choice of $\beta$. We can take $\beta < (100\gamma (Q-\gamma))^{-1}$. 

By the scaling property of the $\gamma$-quantum cone~\eqref{eqn-cone-scale}, the scale invariance of the law of $\eta$, and the fact that $h$ and $\eta$ are independent, for each $b>0$ we have
\eqb \label{eqn-field-curve-scale}
\left( h(R_b \cdot) + Q\log R_b - \frac{1}{\gamma} \log b , R_b^{-1} \eta(b \cdot)  \right) \eqD \left( h , \eta \right) . 
\eqe
By the LQG coordinate change formula~\eqref{eqn-lqg-coord}, a.s.\ for each $b>0$ the LQG area measure satisfies
\eqbn
\mu_{h(R_b \cdot) + Q\log R_b - \frac{1}{\gamma} \log b  } = b^{-1} \mu_h(R_b \cdot) .
\eqen
Therefore, the adjacency graph of cells $R_b^{-1} \mcl G^\ep$ is obtained from the left field / curve pair in~\eqref{eqn-field-curve-scale} in the same way that $\mcl G^{\ep/b}$ is obtained from $(h,\eta)$. Hence the collection of cells of $R_b^{-1} \mcl G^{\ep } $ has the same law as the collection of cells of $ \mcl G^{\ep/b}$. 

For $\ep\in (0,1)$ and $b\in (0,1]$, let
\eqb \label{eqn-flow-path-scale}
E_b^\ep := \left\{ \sum_{e \in \mcl E\mcl G^\ep(\BB A_{(\ep/b)^\beta R_b , \rho R_b })} [\theta^\ep(e)]^2 \leq C_0 \log(b/\ep) \right\} .
\eqe 
By the discussion in the preceding paragraph and the definition of $\theta^\ep$, we can equivalently define $E_b^\ep$ to be the event of Lemma~\ref{lem-flow-path-macro} with $\ep/b$ in place of $\ep$ and the left field / curve pair in~\eqref{eqn-field-curve-scale} in place of $(h,\eta)$.  
By Lemma~\ref{lem-flow-path-macro} and~\eqref{eqn-field-curve-scale}, there exists $\alpha  = \alpha  (\gamma) \in (0,\alpha_0]$ such that
\eqb \label{eqn-annulus-path-scale-prob}
\BB P\left[ E_b^\ep \right] = 1 - O( (\ep/b)^\alpha) 
\eqe 
a rate depending only on $\rho$ and $\gamma$ as $\ep/b\to 0$.
\medskip

\noindent\textit{Step 2: A particular choice of scales.} 
We now apply the estimates~\eqref{eqn-annulus-path-scale-prob} and~\eqref{eqn-cone-hit-tail} at a carefully chosen set of scales. 
The key property which these scales will satisfy is~\eqref{eqn-cone-hit-cover} below. 

Recall that $Q =2/\gamma  + \gamma/2$ and $n\in\BB N_0$ let
\eqb \label{eqn-annulus-path-b-choice}
 b_n^\ep := \ep^{1 - (1-\xi)^n} \quad \op{for} \quad \xi := \frac{\beta\gamma (Q-\gamma)}{2}  \in (0,1) .
\eqe 
Then $b_0^\ep = 1$ and for $n\in\BB N$, $b_n^\ep := (\ep/b_{n-1}^\ep)^{\xi} b_{n-1}^\ep$. 
By Lemma~\ref{lem-cone-hit-tail} (applied with $C = \rho (\ep/b_{n-1}^\ep)^{-\beta/2}$), after possibly shrinking $\alpha$ (in a manner depending only on $\gamma$ and $\beta$) we can arrange that
\eqb \label{eqn-cone-hit-increment}
\BB P\left[ R_{b_n^\ep} \geq \frac{1}{\rho} (\ep / b_{n-1}^\ep)^{\beta} R_{b_{n-1}^\ep} \right] 
= 1 -  O( (\ep/b_{n-1})^{\alpha } ) 
\eqe 
at a rate depending only on $\gamma$.

For $T\in [1,1/\ep]$, let $n_T^\ep$ be the smallest $n\in\BB N$ with $b_n^\ep \leq T \ep$.
We note that $b_{n-1}^\ep \geq T\ep$, and so $b_n^\ep = \ep^\xi (b_{n-1}^\ep)^{1-\xi} \geq  T^{1-\xi}  \ep$.
By~\eqref{eqn-cone-hit-increment} and a union bound, after possibly further shrinking $\alpha$ we can arrange that with probability at least $1-O_T(T^{-\alpha})$, 
\eqb \label{eqn-cone-hit-cover}
 \left[ R_{T\ep}  , \rho \right]\subset \bigcup_{n=0}^{n_T^\ep} \left[  (\ep/b_n^\ep)^{\beta} R_{b_n^\ep}   ,    \rho R_{b_n^\ep}   \right]  . 
\eqe 
By~\eqref{eqn-annulus-path-scale-prob} (applied with $b = b_0^\ep ,\dots , b_{n_T^\ep}^\ep$) and a union bound, the event
\eqb \label{eqn-cone-hit-events}
  \bigcap_{n=0}^{n_T^\ep} E_{b_n^\ep}^\ep 
\eqe 
also occurs with probability at least $1-O_T(T^{-\alpha})$. 
\medskip

\noindent\textit{Step 3: Conclusion.} 
Suppose now that~\eqref{eqn-cone-hit-cover} and~\eqref{eqn-cone-hit-events} both hold, which happens with probability $1-O_T(T^{-\alpha})$. 
We will show that~\eqref{eqn-flow-path-macro} holds. To this end, we compute
\alb
\sum_{e \in \mcl E\mcl G^\ep(\BB A_{R_{T\ep} ,\rho})} [\theta^\ep(e)]^2
&\leq \sum_{n=0}^{n_T^\ep} \sum_{e \in \mcl E\mcl G^\ep\left(\BB A_{  (\ep/b_n^\ep)^{\beta} R_{b_n^\ep}   ,    \rho R_{b_n^\ep}     }\right)} [\theta^\ep(e)]^2 
\quad \text{(by~\eqref{eqn-cone-hit-cover})} \notag\\
&\leq C_0 \sum_{n=0}^{n_T^\ep} \log(b_n^\ep/\ep) 
\quad \text{(by~\eqref{eqn-cone-hit-events})} \notag\\
&\leq C_0 \log\ep^{-1} \sum_{n=0}^{n_T^\ep} (1-\xi)^n \preceq   \log\ep^{-1}
\quad \text{(by~\eqref{eqn-annulus-path-b-choice})} ,
\ale
with a deterministic implicit constant depending only on $\rho$ and $\gamma$. 
\end{proof}

Since we want a polylogarithmic bound for the probability of the event in Proposition~\ref{prop-eff-res-upper}, we need to apply Lemma~\ref{lem-flow-path-micro} with $T$ at least some positive power of $\log \ep^{-1}$. To deal with the energy of $\theta^\ep$ over $B_{R_{T\ep}}$ for such a value of $T$, we will use the following crude a priori bound.

\begin{lem} \label{lem-flow-path-tiny}
For each $\zeta>0$, there exists $\alpha=\alpha(\zeta,\gamma) > 0$ such that for $T\in [1,1/\ep)$,  
\eqb \label{eqn-flow-path-tiny}
\BB P\left[ \sum_{e \in \mcl E\mcl G^\ep(B_{R_{T\ep}})} [\theta^\ep(e)]^2 \leq   T^{2+\zeta} \right] =   1-O_T(T^{-\alpha}) ,
\eqe 
at a rate depending only on $\rho$ and $\gamma$. 
\end{lem}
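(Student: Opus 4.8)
The plan is to prove~\eqref{eqn-flow-path-tiny} by a crude volume count, exploiting that the unit flow $\theta^\ep$ is bounded pointwise by $1$. Since the path $P^\ep$ is simple, for each oriented edge $e$ the events ``$P^\ep$ traverses $e$ in the forward direction'' and ``$P^\ep$ traverses $e$ in the reverse direction'' are disjoint, so $|\theta^\ep(e)| \leq 1$ and therefore $[\theta^\ep(e)]^2 \leq 1$. Since the $\gamma$-mated-CRT map has no self-loops and at most double edges, this gives
\eqbn
\sum_{e \in \mcl E\mcl G^\ep(B_{R_{T\ep}})} [\theta^\ep(e)]^2 \;\leq\; \# \mcl E\mcl G^\ep\left(B_{R_{T\ep}}\right) \;\leq\; \left( \# \mcl V\mcl G^\ep\left(B_{R_{T\ep}}\right) \right)^2 ,
\eqen
so it suffices to prove that $\# \mcl V\mcl G^\ep(B_{R_{T\ep}}) \leq T^{1 + \zeta/2}$ with probability $1 - O_T(T^{-\alpha})$.

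To bound this vertex count I would pass to the unit scale using the scaling property~\eqref{eqn-cone-scale} of the $\gamma$-quantum cone, exactly as in the proof of Lemma~\ref{lem-flow-path-micro}. Applying~\eqref{eqn-cone-scale} with $b = T\ep$ identifies $\mcl G^\ep$ restricted to the cells that intersect $B_{R_{T\ep}}$ with $\mcl G^{1/T}$ restricted to the cells that intersect $B_1$, built from a fresh circle-average embedding $h$ of a $\gamma$-quantum cone together with its independent space-filling $\SLE_{16/\gamma^2}$ $\eta$ parameterized by $\mu_h$-mass; in particular $\# \mcl V\mcl G^\ep(B_{R_{T\ep}}) \eqD \# \mcl V\mcl G^{1/T}(B_1)$. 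Write $\delta := 1/T$ and fix $q \in \left( 0 , \tfrac{2}{(2+\gamma)^2} \right)$ depending only on $\gamma$. By Lemma~\ref{lem-max-cell-diam} (which applies, with only trivial modifications to the proof, to the cells intersecting any fixed Euclidean ball centered at the origin), there is $\alpha_0 = \alpha_0(\gamma) > 0$ such that with probability $1 - O_\delta(\delta^{\alpha_0})$ every cell $\eta([x-\delta,x])$ with $x \in \mcl V\mcl G^\delta(B_1)$ has Euclidean diameter at most $\delta^q \leq 1$, and hence is contained in $B_2$. On this event, since distinct cells overlap in $\mu_h$-null sets and each cell has $\mu_h$-mass $\delta$,
\eqbn
\delta \, \# \mcl V\mcl G^\delta(B_1) \;=\; \mu_h\left( \bigcup_{x \in \mcl V\mcl G^\delta(B_1)} \eta([x-\delta,x]) \right) \;\leq\; \mu_h(B_2) .
\eqen

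The final ingredient is a tail bound for $\mu_h(B_2)$, the quantum area of a fixed Euclidean ball around the $\gamma$-log singularity of the quantum cone. This is a.s.\ finite and has a finite moment of some positive order $p = p(\gamma) > 0$; this is standard (it can be cited from~\cite{wedges}, or derived from the Gaussian tail of the circle average $h_r(0)$ together with standard moment estimates for Gaussian multiplicative chaos). Markov's inequality then gives $\BB P[\mu_h(B_2) > \delta^{-\zeta/2}] = O_\delta(\delta^{p\zeta/2})$, so combining the last two displays we obtain, with probability $1 - O_\delta(\delta^\alpha)$ for $\alpha := \min\{\alpha_0 , p\zeta/2\} > 0$, that $\# \mcl V\mcl G^\delta(B_1) \leq \delta^{-1}\mu_h(B_2) \leq \delta^{-1 - \zeta/2} = T^{1 + \zeta/2}$. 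Transferring back through the distributional identity and plugging into the first display yields~\eqref{eqn-flow-path-tiny}. I do not expect a genuine obstacle: the estimate is extremely lossy (it overshoots the true order $T^{1 + o_T(1)}$ by a full power of $T$), and the only non-elementary inputs --- the quantum-cone scaling relation~\eqref{eqn-cone-scale} and the positive-moment bound for $\mu_h(B_2)$ --- are, respectively, already used in this section and entirely standard; the one point requiring care is the change-of-scale bookkeeping (checking that $B_{R_{T\ep}}$ really corresponds to $B_1$), which is handled exactly as in Lemma~\ref{lem-flow-path-micro}.
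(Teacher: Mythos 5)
Your overall strategy is close to the paper's: both proofs reduce~\eqref{eqn-flow-path-tiny} to showing $\#\mcl V\mcl G^\ep(B_{R_{T\ep}}) \leq T^{1+\zeta/2}$ with probability $1-O_T(T^{-\alpha})$, and both use the quantum-cone scaling~\eqref{eqn-cone-scale} and a positive-moment/tail bound for the quantum area of a fixed Euclidean ball. However, there is a genuine gap in the middle step. You invoke Lemma~\ref{lem-max-cell-diam} with $\rho = 1$, asserting it ``applies, with only trivial modifications,'' to conclude that every cell intersecting $B_1$ has diameter $\leq \delta^q$ and hence sits inside $B_2$. But Lemma~\ref{lem-max-cell-diam} is stated (and proved in~\cite{gms-harmonic}) only for fixed $\rho \in (0,1)$, and the restriction is not cosmetic: for $\rho < 1$ the cells intersecting $B_\rho$ are contained in $B_{\rho+\ep^q} \subset B_1$ once $\ep$ is small, so the argument only needs the clean description of $h|_{B_1}$ as a (normalized) whole-plane GFF plus $-\gamma\log|\cdot|$. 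For $\rho = 1$ the cells can protrude into $B_{1+\ep^q}$, where the radial part $h_r(0)$ is the conditioned process $\wh B$ from Section~\ref{sec-lqg-prelim} rather than a free Brownian motion, so the required pointwise control on individual cell diameters does not come for free and would need a new absolute-continuity argument near $\bdy B_1$. This is precisely the issue the paper's proof is engineered to sidestep.

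The paper avoids the boundary problem by splitting $\mcl V\mcl G^\ep(B_{R_{T\ep}})$ into cells contained in $B_{2R_{T\ep}}$ and cells crossing the annulus $\BB A_{R_{T\ep},2R_{T\ep}}$. For the first group it only needs a crude tail bound $\BB P[\mu_h(B_{2R_{T\ep}}) \leq T^{1+\zeta/4}\ep] = 1-O_T(T^{-\alpha})$, which (after scaling by $b=T\ep$) reduces to a positive moment for $\mu_h(B_2)$ --- a total-mass estimate, not a per-cell one, and standard even beyond $B_1$ (\cite[Lemma~A.3]{ghs-dist-exponent}). For the annulus-crossing cells it uses a different tool entirely: \cite[Proposition~3.4]{ghm-kpz}, which gives that each such cell contains a Euclidean ball of radius $\geq T^{-\zeta/4}R_{T\ep}$, so a packing count bounds their number by $O(T^{\zeta/2})$. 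To make your proof go through you should either (a) replace the Lemma~\ref{lem-max-cell-diam}-at-$\rho=1$ step by this two-part split, or (b) actually prove the extension of Lemma~\ref{lem-max-cell-diam} to $\rho=1$ by showing $h|_{B_{1+\delta'}}$ is locally absolutely continuous with respect to a whole-plane GFF plus $-\gamma\log|\cdot|$ for small $\delta'>0$. As written, the ``trivial modifications'' claim is an unjustified leap.
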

\begin{proof}
We claim that there is an $\alpha=\alpha(\zeta,\gamma)>0$ such that with probability $1-O_T(T^{-\alpha})$, the graph $\mcl G^\ep(B_{R_{T\ep}}) $ has at most $T^{1+\zeta/2}$ vertices (and hence at most $T^{2+\zeta}$ edges). Since $|\theta^\ep(e)| \leq 1$ for each edge $e$ by definition, this claim immediately implies~\eqref{eqn-flow-path-tiny}.

We now prove the above claim. 
By the scaling property~\eqref{eqn-cone-scale}, the $\gamma$-LQG coordinate change formula, and standard estimates for the LQG measure (see, e.g.,~\cite[Lemma~A.3]{ghs-dist-exponent}), there exists $\alpha=\alpha(\zeta,\gamma)>0$ such that $\BB P[ \mu_h(B_{2 R_{T\ep}}) \leq T^{1+\zeta/4} \ep] = 1- O_T(T^{-\alpha})$. Since the cells $\eta([x-\ep,x])$ of $\mcl G^\ep$ have $\mu_h$-mass $\ep$, this implies that with probability $1-O_T(T^{-\alpha})$, there are at most $T^{1+\zeta/4}$ such cells which are completely contained in $B_{2R_{T\ep}}$.  By~\cite[Proposition~3.4]{ghm-kpz} and the scale invariance of the law of space-filling SLE, modulo time parameterization, it holds except on an event of probability decaying faster than any negative power of $T$ that each cell of $\mcl G^\ep$ which intersects both $ B_{R_{T\ep}}$ and  $\bdy B_{2R_{T\ep}}$ contains a Euclidean ball of radius at least $T^{-\zeta/4} R_{T\ep}$, so there can be at most $4\pi T^{-\zeta/2}$ such cells. 
Combining these estimates and slightly shrinking $\zeta > 0$ proves our claim.  
\end{proof}

\begin{proof}[Proof of Proposition~\ref{prop-eff-res-upper}]
By combining Lemmas~\ref{lem-flow-path-micro} and~\ref{lem-flow-path-tiny}, each applied with, e.g., $T = (\log \ep^{-1})^{1/4}$, we see that the energy of $\theta^\ep$ is at most $C\log\ep^{-1}$ with probability $1-O_\ep((\log\ep^{-1})^\alpha)$ for appropriate constants $C,\alpha$ as in the proposition statement. The desired effective resistance bound now follows from Thomson's principle~\eqref{eqn-thomson}. 
\end{proof}

\begin{remark} \label{remark-asaf}
In an earlier draft of this paper, we proved Proposition~\ref{prop-eff-res-upper} via a somewhat more complicated argument which proceeded by using~\cite[Theorem 3.2]{gms-harmonic} to prove a lower bound for the Dirichlet energy of the function on $\mcl V\mcl G^\ep(B_\rho)$ which is equal to 1 at 0, vanishes on $\mcl V\mcl G^\ep(\bdy B_\rho)$, and is discrete harmonic elsewhere; and then applying Dirichlet's principle~\eqref{eqn-eff-res-dirichlet}. We thank Asaf Nachmias for suggesting the idea for the alternative argument given here.
\end{remark}

\subsection{Proof of Proposition~\ref{prop-sg-eff-res}}
\label{sec-ball-eff-res}

Since $\mcl G^\ep \eqD \mcl G$ for $\ep > 0$, it suffices to show that for an appropriate choice of constants $C$ and $\alpha$ as in the statement of the lemma, there exists for each $r \in\BB N$ an $\ep = \ep_r > 0$ (depending on $r$) such that with probability at least $1-O_r((\log r)^{-\alpha})$, 
\eqb \label{eqn-sg-eff-res-upper}
   \mcl R^{\mcl G^\ep}\left(  0 \leftrightarrow \bdy \mcl B_r^{\mcl G^\ep}(0)    \right)   \leq   C  \log r  ; 
\eqe 
and a possibly different $r$-dependent choice of $\ep > 0$ such that with probability at least $1-O_r((\log r)^{-\alpha})$, 
\eqb \label{eqn-sg-eff-res-lower}
   \mcl R^{\mcl G^\ep}\left(  0 \leftrightarrow \bdy \mcl B_r^{\mcl G^\ep}(0 )      \right)   \geq   C^{-1}  \log r .
\eqe 

We know from Propositions~\ref{prop-eff-res-lower} and~\ref{prop-eff-res-upper} that there exists $C_0 = C_0(\gamma)  >0$ and $\alpha = \alpha(\gamma)  > 0$ such that for each $\ep \in (0,1)$, it holds with probability at least $1-O_\ep( (\log \ep^{-1})^{-\alpha})$ that 
\eqb \label{eqn-use-ball-eff-res}
C_0^{-1} \log \ep^{-1} \leq      \mcl R^{\mcl G^\ep}\left(  0 \leftrightarrow \mcl V\mcl G^\ep\left(\bdy B_{1/2}(0) \right)   \right)  \leq C_0 \log \ep^{-1}  .
\eqe 
So, we need to compare Euclidean balls and graph distance balls. 

We start by proving~\eqref{eqn-sg-eff-res-upper} for an appropriate choice of $\ep$. Fix $q\in \left(0 , \tfrac{2}{(2+\gamma)^2} \right)$, chosen in a manner depending only on $\gamma$. By Lemma~\ref{lem-max-cell-diam}, it holds except on an event of probability decaying faster than some positive power of $\ep$ that the Euclidean diameter of each cell of $\mcl G^\ep$ which intersects $B_{1/2}(0)$ is at most $\ep^q$. This implies any path in $\mcl G^\ep$ from $0$ to $\mcl V\mcl G^\ep(\bdy B_{1/2}(0))$ must contain at least $\frac12 \ep^{-q}$ cells of $\mcl G^\ep$, i.e., $\mcl V\mcl B_{\tfrac12 \ep^{-q}}(0 ; \mcl G^\ep) \subset \mcl V\mcl G^\ep( B_{1/2}(0) )$. Therefore,~\eqref{eqn-sg-eff-res-upper} with $\ep = (2 r)^{-1/q}$ and $C $ slightly larger than $q^{-1} C_0$ follows from the upper bound in~\eqref{eqn-use-ball-eff-res}. 

Now we establish the lower bound~\eqref{eqn-sg-eff-res-lower}. Since the cells of $\mcl G^\ep$ each have $\mu_h$-mass $\ep$, we infer from standard estimates for the $\gamma$-LQG measure (see, e.g.,~\cite[Lemma A.3]{ghs-dist-exponent}) that except on an event of probability decaying faster than some positive power of $\ep$, we have $\# \mcl V\mcl G^\ep(B_{1/2}(0)) \leq \ep^{-2}$, say, which means that $\mcl V\mcl G^\ep(B_{1/2}(0)) \subset \mcl V\mcl B_{\ep^{-2}}^{\mcl G^\ep}(0 )$. Combining this with the lower bound in~\eqref{eqn-use-ball-eff-res} yields~\eqref{eqn-sg-eff-res-lower} with $\ep = r^{-2}$ and $C = 2C_0$.  \qed

\section{Extension to other random planar maps}
\label{sec-discrete}

\subsection{Coupling with the mated-CRT map}
\label{sec-coupling}

In order to transfer from the estimates for mated-CRT maps in Section~\ref{sec-eff-res} to estimates for the other random planar map models we consider here, we will employ a coupling result for $M$ and $\mcl G$ which is proven in~\cite{ghs-map-dist} by means of the strong coupling of random walk with Brownian motion~\cite{zaitsev-kmt}.
Suppose $(M,\BB v)$ is one of the first five rooted random planar maps listed in Section~\ref{sec-main-results} and let $\BB e$ be an oriented root edge for $M$ with initial endpoint $\BB v$. If we equip $M$ with a certain statistical mechanics model $\mcl S$, then there is a bijective encoding of $(M,\BB e , \mcl S)$ (a so-called \emph{mating-of-trees} bijection) by means of a two-sided two-dimensional random walk $\mcl Z : \BB Z \to \BB Z^2$ with i.i.d.\ increments and a certain step distribution depending on the model.  
\begin{enumerate}
\item In the case of the UIPT of type II, $\mcl S$ is critical ($p=1/2$) site percolation on $M$, or equivalently a uniform depth-first-search tree on $M$.  This bijection is introduced in~\cite{bernardi-dfs-bijection} in the setting of a uniform depth-first-search tree on a finite triangulation. The paper~\cite{bhs-site-perc} explains the connection to site percolation and the (straightforward) extension to the UIPT.
\item In the case of the infinite spanning-tree decorated planar map, $\mcl S$ is a uniform spanning tree on $M$~\cite{mullin-maps,bernardi-maps,shef-burger}.
\item For infinite bipolar-oriented planar maps of various types, $\mcl S$ is a uniformly chosen orientation on the edges of $M$ with no source or sink (i.e., the source and sink are equal to~$\infty$)~\cite{kmsw-bipolar}.
\item For the uniform infinite Schnyder-wood decorated triangulation, $\mcl S$ is a uniformly chosen Schnyder wood on $M$~\cite{lsw-schnyder-wood}.
\end{enumerate}
These bijections are each reviewed in~\cite{ghs-map-dist}. We will not need the precise definitions of the bijections here. In fact, the particular statistical mechanics model on the map does not matter for our purposes --- we only need some model for which these exists a mating-of-trees bijection wherein the walk has i.i.d.\ increments.
 
In each of the above cases, we let $\mcl G$ be the $\gamma$-mated-CRT map where $\gamma$ is the LQG parameter corresponding to $M$ as listed in Section~\ref{sec-main-results}. To state our coupling result, we need to define for each $n\in\BB N$ a planar map which corresponds to the finite time interval $[-n,n]_{\BB Z}$ (the reason for this is that the coupling theorem of~\cite{zaitsev-kmt} only allows us to compare random walk and Brownian motion on a finite time interval). We start by considering the needed maps in the mated-CRT map case.

\begin{defn} \label{def-sg-restrict}
For $n\in\BB N$, we write $\mcl G_{n}$ for the sub-graph of $\mcl G$ whose vertex set is $[-n,n]_{\BB Z}$ and whose edge set consists of all of the edges of $\mcl G$ between two such vertices.  
\end{defn}

In each of the cases listed above, the corresponding bijection gives for each $n\in\BB N$ a planar map~$M_n$ with boundary\footnote{Recall that a planar map with boundary is a planar map $G$ with a distinguished face (the \emph{external face}), in which case the boundary $\bdy G$ is the set of vertices and edges on the boundary of this face.}~$\bdy M_{n}$ associated with the random walk increment $\mcl Z|_{[-n,n]_{\BB Z}}$. The map $M_{n}$ is the discrete analog of $\mcl G_n$ and is defined in a slightly different manner in each case (the particular definitions are given in~\cite{ghs-map-dist}).
 
The map $M_{n}$ is not necessarily a subgraph of $M$ since it is possible that some pairs of vertices or pairs of edges of $\bdy M_n$ can be identified together after time $n$ (see~\cite[Section 1.4]{ghs-map-dist} for further discussion). But, there is an (almost) inclusion map 
\eqb \label{eqn-inclusion-function}
\iota_n : M_{n} \to M \quad \text{which is injective on $M_{n} \setminus \bdy M_{n}$},
\eqe 
which means that we can canonically identify $M_{n} \setminus \bdy M_{n}$ with a subgraph of $M$. The map $M_n$ possesses a canonical root vertex which is mapped to $\BB v$ by $\iota_n$, and which (by a slight abuse of notation) we identify with $\BB v$.

One also obtains for each $n \in\BB N$ functions
\eqb \label{eqn-peano-functions}
\phi_n : \mcl V(M_{n}) \to [-n,n]_{\BB Z}  \quad \op{and} \quad \psi_n : [-n,n]_{\BB Z} \to \mcl V(M_{n}) 
\eqe    
which satisfy $ \phi_n(\BB v) =0$ and $ \psi_n(0) = \BB v$.
Roughly speaking, the vertex $\psi_n(i)$ corresponds to the $i$th step of the walk $\mcl Z $ in the bijective construction of $(M,e_0,T)$ from $\mcl Z$ and $\phi_n$ is ``close" to being the inverse of $\psi_n$. 
However, the construction of $M_{n}$ from $\mcl Z|_{[-n,n]_{\BB Z}}$ does not set up an exact bijection between $[-n, n]_{\BB Z}$ and the vertex set of $M_{n} $, so the functions $\phi_n$ and $\psi_n$ are neither injective nor surjective.  See Figure~\ref{fig-coupling-setup} for an illustration of the above definitions.

\begin{figure}[ht!]
 \begin{center}
\includegraphics[scale=.7]{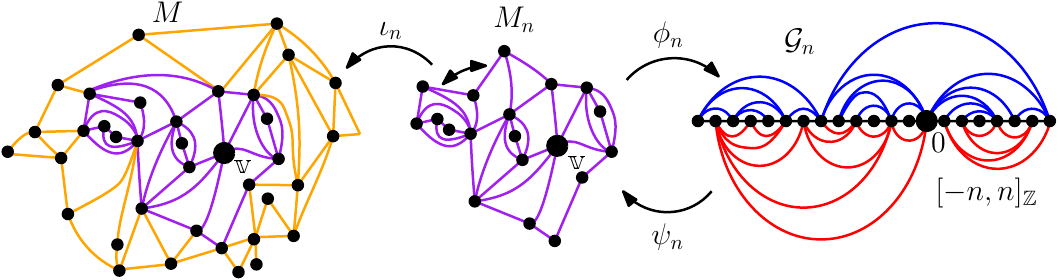} 
\caption{Illustration of the map $M_n$, the ``almost inclusion" map $\iota_n : M_n\rta M$, and the maps $\phi_n : \mcl V(M_n) \rta \mcl V(\mcl G_n)$ and $\psi_n : \mcl V(\mcl G_n) \rta \mcl V(M_n)$. Note that two edges of $\bdy M_n$ get identified when we apply $\iota_n$ in the figure. Theorem~\ref{thm-map-count} implies that the maps $\phi_n$ and $\psi_n$ are rough isometries up to a polylogarithmic factor, and that they distort discrete Dirichlet energies by at most a polylogarithmic factor (Lemma~\ref{lem-energy-compare}). A similar figure and caption appears in~\cite{ghs-map-dist}. 
}\label{fig-coupling-setup}
\end{center}
\end{figure}

Our main tool for comparing $M$ and $\mcl G$ is the following theorem, which is~\cite[Theorem~1.9]{ghs-map-dist}.
For the theorem statement, we define a \emph{path} in a graph $G$ be a function $P : [0,N]_{\BB Z} \to \mcl V(G)$ with the property that $P(i-1)$ and $P(i)$ are either identical or joined by an edge in $\mcl G$ for each $i\in [1,N]_{\BB Z}$. We write $|P| := N$ for the length of $P$.  
 
\begin{thm}
\label{thm-map-count}
Let $\phi_n$ and $\psi_n$ be as in~\eqref{eqn-peano-functions}. 
There are universal constants $p,q>0$ such that for each $A > 0$,  there is a $C = C(A) >0$ such that for each $n\in\BB N$, there is a coupling of $\mcl G$ and $(M,\BB e , \mcl S)$ such that with probability $1-O_n(n^{-A})$, the following is true. 
\begin{enumerate} 
\item For each $v_1,v_2 \in  \mcl V(M_{n} )$ with $v_1 \sim v_2$ in $M_n$, there is a path $P_{v_1,v_2}^{\mcl G}$ from $\phi_{n}(v_1)$ to $\phi_{n}(v_2)$ in $\mcl G_{n}$ with $|P_{v_1,v_2}^{\mcl G}| \leq C (\log n)^p$, and each $i\in [-n , n]_{\BB Z}$ is hit by a total of at most $ C (\log n)^q $ of the paths $P_{v_1,v_2}^{\mcl G}$ for $v_1,v_2 \in  \mcl V(M_{n} )$ with $v_1 \sim v_2$.  \label{item-map-count-G}
\item For each $x_1,x_2 \in    [-n , n]_{\BB Z}$ with $x_1 \sim x_2$ in $\mcl G_n$, there is a path $P_{x_1,x_2}^M$ from $\psi_n(x_1)$ to $\psi_n(x_2)$ in $M_{n}$ with $|P_{x_1,x_2}^{M}| \leq C (\log n)^p$, and each $v \in \mcl V(M_{n})$ is hit by a total of at most $ C (\log n)^q$ of the paths $P_{x_1,x_2}^M$ for $x_1,x_2 \in    [-n , n]_{\BB Z}$ with $x_1 \sim x_2$.  \label{item-map-count-M} 
\item We have $\op{dist}^{M_n}\left(\psi_{n}(\phi_{n}(v)) , v  \right) \leq C ( \log n)^p$ for each $v\in \mcl V(M_{n})$ and $ \op{dist}^{\mcl G_n}\left(\phi_{n}(\psi_{n}(x)) , x  \right) \leq C (\log n)^p $ for each $x \in [-n , n]_{\BB Z}$. \label{item-map-count-close}
\end{enumerate}
\end{thm}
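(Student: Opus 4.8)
Since this is \cite[Theorem~1.9]{ghs-map-dist}, I only outline the strategy one would follow. The two encoded objects sit on opposite sides of a strong approximation: $(M_n,\mcl S|_{M_n})$ is a deterministic function of the random walk increments $\mcl Z|_{[-n,n]_{\BB Z}}$ via the relevant mating-of-trees bijection, while $\mcl G_n$ is the analogous deterministic function of the correlated Brownian motion $Z = (L,R)$ via the running-infimum adjacency rule~\eqref{eqn-inf-adjacency}. The plan is to first apply the multidimensional Koml\'os--Major--Tusn\'ady--Zaitsev strong approximation theorem~\cite{zaitsev-kmt,kmt}: since $\mcl Z$ has i.i.d.\ increments with mean zero, exponential moments, and covariance a scalar multiple of that of $Z$ (both correlations equal to $-\cos(\pi\gamma^2/4)$), one can couple $\mcl Z$ and $Z$ so that $\BB P\!\left[ \max_{|j|\leq n}|\mcl Z_j - Z_j| \leq C_A \log n \right] = 1 - O_n(n^{-A})$ for every $A>0$. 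On this event the two encoding processes agree up to a polylogarithmic error.

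The core of the argument is then deterministic: a ``stability of the mating-of-trees construction'' statement to the effect that if the encoding walk and the encoding Brownian motion are uniformly within $\delta$ of one another, then every edge of $M_n$ is realized by a path of polylogarithmic length in $\mcl G_n$ (and conversely), with each vertex used by only polylogarithmically many of these paths. Here $\phi_n,\psi_n$ are the functions supplied by the bijection: $\phi_n(v)$ is the time index attached to the vertex $v$ in the encoding of $M_n$, and $\psi_n$ is its near-inverse. To build $P^{\mcl G}_{v_1,v_2}$ for $v_1\sim v_2$ in $M_n$, I would use that adjacency in the discrete model means the corresponding pair of intervals is adjacent in the pair of discrete trees whose contour functions are the coordinates of $\mcl Z$; a $\delta$-perturbation of these contour functions can change the glued graph only by splitting such an adjacency into a short chain of adjacencies, whose length is controlled by the number of near-minima of $L$ (or $R$), at the relevant level, inside the relevant time interval. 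Since that time interval has length at most a polylogarithmic multiple of $\delta$ (again by the coupling), diffusive fluctuation estimates for $Z$ bound the chain length by a polylogarithmic quantity, giving item~\ref{item-map-count-G}; the symmetric direction~\ref{item-map-count-M} and the near-inverse property~\ref{item-map-count-close} are handled in the same way.

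For the multiplicity bounds one shows that each time index of $\mcl G_n$ (resp.\ vertex of $M_n$) can lie on the paths associated with only polylogarithmically many edges of the other map. Since the paths $P^{\mcl G}_{v_1,v_2}$ can be chosen to stay in a window of polylogarithmic width in the time parameter around $\phi_n(v_1)$ and $\phi_n(v_2)$, this reduces to an a priori estimate: the number of vertices of $\mcl G_n$, resp.\ $M_n$, whose encoding time lies in a polylog-width window is itself polylogarithmic, which follows from the exponential tail of the root degree (\cite[Lemma~4.2]{angel-schramm-uipt},~\cite[Section~4.2]{chen-fk},~\cite[Section~3.3]{ghs-map-dist},~\cite[Lemma~2.2]{gms-harmonic}) together with standard local volume estimates for the two maps. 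The boundary $\bdy M_n$, on which the encoding degenerates, requires only the coarse statement that it corresponds to a polylog-width window of times near $\pm n$, which again follows from the coupling.

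The hard part is the deterministic stability step. The adjacency condition~\eqref{eqn-inf-adjacency} is genuinely discontinuous in the encoding process, so one cannot hope for ``adjacent implies adjacent''; phrasing the conclusion as short paths with bounded multiplicity, rather than an approximate graph isomorphism, is precisely what makes it robust under an $O(\log n)$ perturbation. Carrying this out requires an explicit polynomial estimate for how the tree pseudo-distance between two cells depends on the sup-norm perturbation of the encoding process, and then re-running the analysis through each of the four distinct mating-of-trees bijections (depth-first search / site percolation for the UIPT, spanning tree, bipolar orientation, Schnyder wood). This model-by-model bookkeeping, rather than the probabilistic input, is where the bulk of the work in~\cite{ghs-map-dist} goes.
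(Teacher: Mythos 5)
The paper does not contain a proof of this statement: it is quoted verbatim as~\cite[Theorem~1.9]{ghs-map-dist} immediately after the sentence ``Our main tool for comparing $M$ and $\mcl G$ is the following theorem, which is~\cite[Theorem~1.9]{ghs-map-dist}.'' You correctly recognized this, and your sketch of the proof in~\cite{ghs-map-dist} captures the right structure: (i) the Zaitsev multidimensional KMT coupling of the encoding walk $\mcl Z$ with the Brownian motion $Z$ on $[-n,n]$, with sup-distance $O(\log n)$ outside an event of probability $O(n^{-A})$; (ii) a deterministic ``stability under sup-norm perturbation'' step in which adjacency in one map is replaced by a short chain of adjacencies in the other, read off from the running infima of the encoding processes; (iii) multiplicity control from the fact that these replacement paths stay in polylog-width windows of the time parameter, combined with degree/local-volume bounds; (iv) a model-by-model pass through the several mating-of-trees bijections. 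One small caveat: phrasing the multiplicity bound as following ``from the exponential tail of the root degree together with standard local volume estimates'' undersells what is actually needed — in~\cite{ghs-map-dist} this is a genuinely delicate counting argument about how many cells/edges can share a given time window, not a consequence of degree tails alone. But as a high-level account of a theorem the present paper only cites, your outline is accurate and appropriately scoped.
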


As explained in~\cite[Lemma~1.10]{ghs-map-dist}, the conditions of Theorem~\ref{thm-map-count} imply that $\phi_n$ and $\psi_n$ are rough isometries up to a factor of $C(\log n)^p$, meaning that for each $v_1,v_2 \in \mcl V(M|_{n})$, 
\allb \label{eqn-map-coupling-G}
C^{-1} (\log n)^{-p} \op{dist}^{M_{n}}\left( v_1 , v_2    \right)  - 2 
 \leq  \op{dist}^{\mcl G_{n}}\left( \phi_n(v_1) , \phi_n(v_2)   \right) 
  \leq  C (\log n)^p \op{dist}^{M_{n}}\left( v_1 , v_2    \right)   ; 
\alle
and for each $x_1,x_2 \in [-n  , n ]_{\BB Z}$,
\allb \label{eqn-map-coupling-M}
 C^{-1} (\log n)^{-p} \op{dist}^{\mcl G_n} \left( x_1 , x_2     \right)  - 2
 \leq \op{dist}^{M_n}\left( \psi_n(x_1) , \psi_n(x_2)   \right)  
  \leq  C (\log n)^p \op{dist}^{\mcl G_n} \left( x_1 , x_2    \right)    . 
\alle 

Theorem~\ref{thm-map-count} also enables us to compare the Dirichlet energies of functions on $M_n$ and $\mcl G_n$ (recall Definition~\ref{def-discrete-dirichlet}). 

\begin{lem} \label{lem-energy-compare}
Fix $A > 0$ and suppose we have coupled $M$ with $\mcl G$ in such a way that the conclusion of Theorem~\ref{thm-map-count} is satisfied. 
If we let $p,q>0$ and $C = C(A)  > 0$ be the constants from that theorem, then for each $n\in\BB N$ it holds with probability $1-O_n(n^{-A})$ that the following is true (with $\phi$ and $\psi$ as in~\eqref{eqn-peano-functions}). 
Each function $f : [-n  , n ]_{\BB Z} \to \BB R$ satisfies
\eqb \label{eqn-energy-compare-G}
   \op{Energy}\left( f \circ \phi_n ; M_{n} \right)    \leq  C^2 (\log n)^{p+q}  \op{Energy}\left(f ; \mcl G_{n} \right)  
\eqe  
and each function $g : \mcl V( M|_{n}) \to \BB R$ satisfies
\eqb \label{eqn-energy-compare-M}
  \op{Energy}\left( g \circ \psi_n ; \mcl G_{n} \right)    \leq  C^2 (\log n)^{p+q}  \op{Energy}\left( g ; M_{n} \right) .
\eqe  
\end{lem}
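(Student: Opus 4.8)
The plan is to run the standard argument that a rough isometry with bounded multiplicity distorts Dirichlet energies by at most the product of the length-distortion and overlap constants, with the explicit paths furnished by Theorem~\ref{thm-map-count} playing the role of the quasi-isometry structure. We work throughout on the event of probability $1-O_n(n^{-A})$ on which the conclusion of Theorem~\ref{thm-map-count} holds. We may assume each path $P^{\mcl G}_{v_1,v_2}$ and $P^M_{x_1,x_2}$ is simple: replacing a path by its loop erasure only decreases the length and does not increase the number of paths passing through any given vertex, so it preserves items~\ref{item-map-count-G} and~\ref{item-map-count-M}. The two inequalities \eqref{eqn-energy-compare-G} and \eqref{eqn-energy-compare-M} are symmetric to one another — the former uses item~\ref{item-map-count-G} of Theorem~\ref{thm-map-count} and the latter item~\ref{item-map-count-M}, with the roles of $(M_n,\phi_n)$ and $(\mcl G_n,\psi_n)$ interchanged — so it suffices to explain the proof of \eqref{eqn-energy-compare-G}.

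Fix $f:[-n,n]_{\BB Z}\to\BB R$. For an edge $\{v_1,v_2\}$ of $M_n$, write the simple path $P^{\mcl G}_{v_1,v_2}$ as a vertex sequence $\phi_n(v_1)=x_0,x_1,\dots,x_N=\phi_n(v_2)$ in $\mcl G_n$ with $N=|P^{\mcl G}_{v_1,v_2}|\le C(\log n)^p$ and consecutive vertices equal or $\mcl G_n$-adjacent. Telescoping and applying Cauchy--Schwarz gives
\[
\bigl(f(\phi_n(v_1))-f(\phi_n(v_2))\bigr)^2\le N\sum_{k=1}^N\bigl(f(x_{k-1})-f(x_k)\bigr)^2\le C(\log n)^p\sum_{k=1}^N\bigl(f(x_{k-1})-f(x_k)\bigr)^2,
\]
where terms with $x_{k-1}=x_k$ vanish and the remaining ones equal $(f(a)-f(b))^2$ for an edge $\{a,b\}$ of $\mcl G_n$ traversed by $P^{\mcl G}_{v_1,v_2}$. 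Summing over the edges of $M_n$ (counted with multiplicity) and interchanging the order of summation yields
\[
\op{Energy}\bigl(f\circ\phi_n;M_n\bigr)\le C(\log n)^p\sum_{\{a,b\}\in\mcl E(\mcl G_n)}N_{\{a,b\}}\,\bigl(f(a)-f(b)\bigr)^2,
\]
where $N_{\{a,b\}}$ counts the edges $\{v_1,v_2\}$ of $M_n$ whose path $P^{\mcl G}_{v_1,v_2}$ traverses $\{a,b\}$. A simple path traverses $\{a,b\}$ at most once and passes through the vertex $a\in\mcl V(\mcl G_n)$, so item~\ref{item-map-count-G} gives $N_{\{a,b\}}\le C(\log n)^q$, and hence $\op{Energy}(f\circ\phi_n;M_n)\le C^2(\log n)^{p+q}\op{Energy}(f;\mcl G_n)$. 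The proof of \eqref{eqn-energy-compare-M} is identical after swapping $M_n\leftrightarrow\mcl G_n$, $\phi_n\leftrightarrow\psi_n$, $f\leftrightarrow g$ and invoking item~\ref{item-map-count-M} in place of item~\ref{item-map-count-G}.

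I expect the only genuine point requiring care — and hence the main obstacle — to be the bookkeeping with multiple edges: a pair $v_1\sim v_2$ joined by $m$ parallel edges contributes $(f(\phi_n(v_1))-f(\phi_n(v_2)))^2$ with weight $m$ to $\op{Energy}(f\circ\phi_n;M_n)$, so its single path must be counted with weight $m$ in the bound on $N_{\{a,b\}}$; one therefore needs the path-counting estimate of Theorem~\ref{thm-map-count} to hold with multiplicity, as it does in~\cite{ghs-map-dist}. Everything else is the textbook Cauchy--Schwarz-plus-overlap estimate, and no SLE/LQG input enters this lemma at all: all of the probabilistic content is contained in Theorem~\ref{thm-map-count}, and the loop-erasure reduction together with the symmetry between the two displays keeps the write-up short.
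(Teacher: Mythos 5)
Your argument is correct and coincides with the paper's proof: both apply Cauchy--Schwarz along the paths supplied by Theorem~\ref{thm-map-count} and then use the vertex-overlap bound to control how many times each edge of the target graph is used, so this is essentially the same approach. Your loop-erasure preprocessing is a useful refinement that the paper leaves implicit: the paper passes from the vertex-overlap bound ("each $i$ is hit by at most $C(\log n)^q$ of the paths") to the edge-traversal bound ("each edge is traversed by at most $C(\log n)^q$ of these paths") and then directly concludes the energy estimate, which tacitly requires each path to cross a given edge of $\mcl G_n$ at most once; absent that, a non-simple path of length up to $C(\log n)^p$ could traverse one edge many times and the resulting bound would degrade to $C^3(\log n)^{2p+q}$ (still polylogarithmic, hence harmless for the paper's purposes, but not the stated constant). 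Your reduction to simple paths fixes the exponent cleanly, and your remark about parallel edges of $M_n$ correctly flags the other place where the path-count in Theorem~\ref{thm-map-count} must be interpreted with multiplicity for the constants to come out as claimed.
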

\begin{proof}
Assume we are working on the event that the conclusion of Theorem~\ref{thm-map-count} is satisfied, which happens with probability $1-O_n(n^{-A})$. 
For an edge $\{v_1,v_2\} \in \mcl E(M_{n})$, let $P_{v_1,v_2}^{\mcl G}$ be the path from $\phi_n(v_1)$ to $\phi_n(v_2)$ in $\mcl G_{n}$ from Theorem~\ref{thm-map-count} which satisfies $|P_{v_1,v_2}^{\mcl G}| \leq C (\log n)^p$. 
By the Cauchy-Schwarz inequality, for any function $f : [-n  , n ]_{\BB Z} \to \BB R$, 
\alb
(f( \phi_n(v_1) ) - f( \phi_n(v_2) ) )^2 
&\leq \left( \sum_{i=1}^{|P_{v_1,v_2}^{\mcl G} |}  |f( P_{v_1,v_2}^{\mcl G}(i) )   - f( P_{v_1,v_2}^{\mcl G}(i-1)  ) | \right)^2 \notag\\
&\leq  C (\log n)^p \sum_{i=1}^{|P_{v_1,v_2}^{\mcl G}|}  \left( f( P_{v_1,v_2}^{\mcl G}(i) )   - f( P_{v_1,v_2}^{\mcl G}(i-1)  ) \right)^2  .
\ale
Summing over all such edges $\{v_1,v_2\}$ gives
\eqb \label{eqn-energy-compare-sum}
\op{Energy}\left( f \circ \phi_n  ; M_{n} \right) 
\leq  C (\log n)^p \sum_{\{v_1,v_2\} \in \mcl E (M) } \sum_{i=1}^{|P_{v_1,v_2}^{\mcl G}|} \left( f( P_{v_1,v_2}^{\mcl G}(i) )   - f( P_{v_1,v_2}^{\mcl G}(i-1)  ) \right)^2  .
\eqe 
By Theorem~\ref{thm-map-count}, each vertex $x\in [-n  , n ]_{\BB Z}$ is hit by at most $C (\log n)^q$ of the paths $P_{v_1,v_2}^{\mcl G}$. Consequently, each edge of $\mcl G_{n}$ is traversed by at most $C (\log n)^q$ of these paths, so the right side of~\eqref{eqn-energy-compare-sum} is bounded above by $C^2 (\log n)^{p+q}  \op{Energy}\left(f ; \mcl G_{n} \right) $. This gives~\eqref{eqn-energy-compare-G}. The bound~\eqref{eqn-energy-compare-M} is proven using exactly the same argument with the roles of $M$ and $\mcl G$ interchanged.
\end{proof}

\subsection{Proofs of main theorems}
\label{sec-proof}

We now conclude the proofs of our main theorems. 
We start by transferring the upper bound for effective resistance from Proposition~\ref{prop-sg-eff-res} to the other models considered in this paper.

\begin{prop} \label{prop-map-eff-res}
Suppose $(M,\BB v)$ is one of the first five random planar maps listed in Section~\ref{sec-main-results}. 
There exists $\alpha , C , p > 0$ (depending on the particular model) such that for each $r \in\BB N$, it holds with probability $1-O_r( (\log r)^{-\alpha})$ that
\eqb \label{eqn-map-eff-res}
   \mcl R^M\left(  \BB v \leftrightarrow \bdy \mcl B_r^M(\BB v  )  \right)   \leq   C (\log r)^p .
\eqe 
\end{prop}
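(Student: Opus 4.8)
The plan is to transfer the effective-resistance bound for the mated-CRT map from Proposition~\ref{prop-sg-eff-res} to $M$ by pushing a near-optimal unit flow through the coupling of Theorem~\ref{thm-map-count}, and then converting the resulting flow-energy estimate back into an effective-resistance bound via Thomson's principle~\eqref{eqn-thomson}. Throughout, let $p_0,q_0>0$ and $C(A)$ be the constants from Theorem~\ref{thm-map-count}, and let $C_0$ denote the constant from Proposition~\ref{prop-sg-eff-res}.

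Given $r\in\BB N$, I would fix $A>0$, choose $n=n(r)$ growing with $r$ (a sufficiently large power of $r$ suffices --- large enough for the containments below), and work on the event of Theorem~\ref{thm-map-count} at scale $n$, whose failure probability $O_n(n^{-A})$ is negligible next to the target error. Set $R:=\lceil C(A)(\log n)^{p_0}(r+3)\rceil$, so that $\log R\asymp\log r$. Two a priori containments are needed: with probability $1-O_r(r^{-c})$ one should have (i) $\mcl B_R^{\mcl G}(0)\subseteq\mcl G_n$ and (ii) $\mcl B_{\rho_r}^M(\BB v)\subseteq\iota_n(M_n\setminus\bdy M_n)$ with $M$- and $M_n$-graph distances agreeing there, where $\rho_r$ is a sufficiently large polylogarithmic-in-$r$ multiple of $r$; I would deduce these from~\cite{ghs-map-dist} together with the crude volume and degree estimates already used in Section~\ref{sec-eff-res}. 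Finally, applying Proposition~\ref{prop-sg-eff-res} with $R$ in place of $r$ gives, off an event of probability $O_r((\log r)^{-\alpha})$, that $\mcl R^{\mcl G}(0\leftrightarrow\bdy\mcl B_R^{\mcl G}(0))\leq C_0\log R\leq 2C_0\log r$.

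On the intersection of these events, Thomson's principle~\eqref{eqn-thomson} provides a unit flow $\theta^{\mcl G}$ from $0$ to $\bdy\mcl B_R^{\mcl G}(0)$, supported on the finite subgraph $\mcl B_R^{\mcl G}(0)\subseteq\mcl G_n$, with $\sum_{e\in\mcl E(\mcl G_n)}[\theta^{\mcl G}(e)]^2\leq 2C_0\log r+1$. I would push it forward using the paths $P_{x_1,x_2}^M$ in $M_n$ from Theorem~\ref{thm-map-count}: for each oriented edge $(x_1,x_2)$ of $\mcl B_R^{\mcl G}(0)$, route $\theta^{\mcl G}(x_1,x_2)$ units of flow along $P_{x_1,x_2}^M$, obtaining a flow $\theta^M$ on $M_n$. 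The divergence of $\theta^M$ at a vertex $v$ equals the sum of the divergences of $\theta^{\mcl G}$ over $\{x:\psi_n(x)=v\}$; combined with the rough isometry~\eqref{eqn-map-coupling-M}, under which $\bdy\mcl B_R^{\mcl G}(0)$ (at $\mcl G_n$-distance $\geq R$ from $0$) maps to vertices at $M_n$-distance $\geq C(A)^{-1}(\log n)^{-p_0}R-2>r$ from $\BB v=\psi_n(0)$, this shows that $\theta^M$ is a unit flow from $\BB v$ to $\Psi:=\psi_n(\bdy\mcl B_R^{\mcl G}(0))$ and that $\Psi\cap\mcl B_r^M(\BB v)=\emptyset$. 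Running the Cauchy--Schwarz argument of Lemma~\ref{lem-energy-compare} for flows (each $P_{x_1,x_2}^M$ has length $\leq C(A)(\log n)^{p_0}$ and each edge of $M_n$ lies on at most $C(A)(\log n)^{q_0}$ of them) then gives
\eqb
\sum_{e\in\mcl E(M_n)}[\theta^M(e)]^2\;\leq\;C(A)^2(\log n)^{p_0+q_0}\sum_{e\in\mcl E(\mcl G_n)}[\theta^{\mcl G}(e)]^2\;\leq\;C_1(\log r)^{\,p_0+q_0+1}.
\eqe

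To conclude, by containment (ii) the support of $\theta^M$ together with $\BB v$ and $\Psi$ lies in $\iota_n(M_n\setminus\bdy M_n)$, so $\theta^M$ may be regarded as a unit flow in $M$ itself, and Thomson's principle~\eqref{eqn-thomson} yields $\mcl R^M(\BB v\leftrightarrow\Psi)\leq C_1(\log r)^{p_0+q_0+1}$. Since $\Psi$ is disjoint from $\mcl B_r^M(\BB v)$, every path in $M$ from $\BB v$ to $\Psi$ must cross $\bdy\mcl B_r^M(\BB v)$, so a standard truncation turns $\theta^M$ into a unit flow from $\BB v$ to $\bdy\mcl B_r^M(\BB v)$ of no larger energy; hence $\mcl R^M(\BB v\leftrightarrow\bdy\mcl B_r^M(\BB v))\leq C(\log r)^p$ with $p=p_0+q_0+1$ and a suitable $C$, and the probability bound follows by collecting the three error events. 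The routine part of this argument is the flow-energy estimate; the main obstacle is the bookkeeping around the scale $n$ --- one must choose $n$ large enough that the metric ball $\mcl B_{\rho_r}^M(\BB v)$ of polylogarithmic-in-$r$ radius sits well inside $M_n$ and the mated-CRT ball $\mcl B_R^{\mcl G}(0)$ sits inside $\mcl G_n$, while keeping the $O_n(n^{-A})$ error and the a priori containment estimates negligible next to the target error $O_r((\log r)^{-\alpha})$ --- and this is where one invokes a priori volume and distance bounds for $M$ and $\mcl G$ and the structure of the mating-of-trees encoding from~\cite{ghs-map-dist}.
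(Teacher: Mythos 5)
Your proof is correct, but it takes a genuinely different (dual) route from the paper's. The paper works with harmonic functions and Dirichlet's principle~\eqref{eqn-eff-res-dirichlet}: it takes the $M$-harmonic function $\frk g_r$ realizing $\mcl R^M(\BB v\leftrightarrow\bdy\mcl B_r^M(\BB v))^{-1}$, pulls it back to $\mcl G$ via $\psi_n$ with $n\asymp r^K$, and uses Lemma~\ref{lem-energy-compare} to bound $\op{Energy}(\frk g_r\circ\psi_n;\mcl G)\leq C(\log r)^{p_0}\op{Energy}(\frk g_r;M)$. Then it observes that $\frk g_r\circ\psi_n$ is a valid test function for the variational problem solved by the $\mcl G$-harmonic function $\frk f_{r^2}$ (it equals $1$ at $0$ and vanishes outside $\mcl B_{r^2}^{\mcl G}(0)$, by the rough isometry~\eqref{eqn-map-coupling-M} and the containment from \cite[Lemma~1.11]{ghs-map-dist}), so $\op{Energy}(\frk f_{r^2};\mcl G)\leq \op{Energy}(\frk g_r\circ\psi_n;\mcl G)$. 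Proposition~\ref{prop-sg-eff-res} gives $\op{Energy}(\frk f_{r^2};\mcl G)\geq (2C_0\log r)^{-1}$, and chaining these inequalities lower-bounds $\op{Energy}(\frk g_r;M)$ and hence upper-bounds $\mcl R^M$.

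You instead work with unit flows and Thomson's principle~\eqref{eqn-thomson}: you push forward the near-optimal $\mcl G$-flow along the paths $P_{x_1,x_2}^M$ from Theorem~\ref{thm-map-count}, verify it is a unit flow from $\BB v$ to a target set disjoint from $\mcl B_r^M(\BB v)$ via the rough isometry, and bound its energy by the flow analogue of the Cauchy--Schwarz calculation in Lemma~\ref{lem-energy-compare}. Both arguments consume identical inputs --- Proposition~\ref{prop-sg-eff-res}, Theorem~\ref{thm-map-count}(2), and the ball-containment bookkeeping from \cite[Lemma~1.11]{ghs-map-dist} --- and both produce the exponent $p=p_0+q_0+1$. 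The paper's route is marginally more economical since Lemma~\ref{lem-energy-compare} is already stated for Dirichlet energies of functions and can be cited directly, whereas you must (and correctly note you must) rerun the same argument for flow energies; yours, on the other hand, makes the flow/push-forward picture more explicit and is perhaps the more intuitive electrical-network argument. One small point to be precise about: the flow $\theta^M$ is built in $M_n$, and a few of the paths $P^M_{x_1,x_2}$ could a priori touch $\bdy M_n$; the clean fix (which your choice of $\rho_r$ already anticipates) is to first obtain the bound $\mcl R^{M_n}(\BB v\leftrightarrow\Psi)\leq C(\log r)^p$, observe $\bdy\mcl B_r^{M_n}(\BB v)$ separates $\BB v$ from $\Psi$ in $M_n$, and then use that $\iota_n$ restricts to a graph isomorphism of the radius-$r$ balls so that $\mcl R^{M_n}(\BB v\leftrightarrow\bdy\mcl B_r^{M_n}(\BB v))=\mcl R^M(\BB v\leftrightarrow\bdy\mcl B_r^M(\BB v))$.
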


The same argument also gives the lower bound $\mcl R^M\left(  \BB v \leftrightarrow \bdy \mcl B_r^M(\BB v  )  \right) \geq C^{-1} (\log r)^{-p}$, but this lower bound is trivial since by~\eqref{eqn-eff-res-def},
\eqbn
\mcl R^M\left(  \BB v \leftrightarrow \bdy \mcl B_r^M(\BB v  )  \right) \geq 1/\op{deg}^M(\BB v) .
\eqen 

\begin{proof}[Proof of Proposition~\ref{prop-map-eff-res}]
Let $\gamma \in (0,2)$ be the LQG parameter for $M$ and let $\mcl G$ be the $\gamma$-mated-CRT map. 
For $m\in\BB N$, let $\frk f_m : \BB Z \to [0,1]$ be the function such that $\frk f_{m}(0) = 1$, $\frk f_{m}$ vanishes outside of $  \mcl B_{m-1}^{\mcl G}(0 )$, and $\frk f_{m}$ is $\mcl G$-discrete harmonic on $\mcl B_{m-1}^{\mcl G}(0  ) \setminus \{0\}$. 
We similarly define $\frk g_m : \mcl V(M) \to [0,1]$ with $\BB v$ in place of $0$ and $M$ in place of $\mcl G$ throughout.  
By Dirichlet's principle~\eqref{eqn-eff-res-dirichlet},
\allb \label{eqn-map-eff-res-dirichlet}
&\op{Energy}\left( \frk f_{m} ;    \mcl G   \right)  = \mcl R^{\mcl G}\left(  0 \leftrightarrow \bdy \mcl B_{m}^{\mcl G}(0   )      \right)^{-1} 
\quad \op{and} \notag\\
&\qquad \qquad \op{Energy}\left( \frk g_{m} ;    M    \right)  = \mcl R^M\left(  \BB v \leftrightarrow \bdy \mcl B_{m}^M(\BB v  )      \right)^{-1}  .
\alle 
We will compare the Dirichlet energies of $\frk f_{r^2}$ and $\frk g_r$ using Lemma~\ref{lem-energy-compare}, then estimate the former using Proposition~\ref{prop-sg-eff-res}. 

By~\cite[Lemma~1.11]{ghs-map-dist}, there exists $K  = K (\gamma ) >1$ such that with probability $1-O_r(r^{-1})$, 
\eqb \label{eqn-use-ball-iso}
\mcl B_{r^2}^{\mcl G}(0 ) \subset \mcl G_{r^K} \quad \op{and} \quad \text{$\iota_{r^K}$ restricts to a graph isomorphism $\mcl B_r^{M_{r^K}}(\BB v  ) \to \mcl B_r^M(\BB v )$}
\eqe
where here we recall that $\iota_{r^K}$ is defined in~\eqref{eqn-inclusion-function}. 
Henceforth suppose we have coupled $M$ and $\mcl G$ together as in Theorem~\ref{thm-map-count} with $A = 1$ and $n = \lceil r^K \rceil$.
Let $C_1 = C (1) > 0$ be the constant from that theorem.
 
On the event~\eqref{eqn-use-ball-iso}, we can view $\frk g_r$ as a function on $\mcl V(M_{r^{K }})$ by identifying $\mcl B_r^{M_{r^K}}(\BB v  )$ and $\mcl B_r^M(\BB v )$ and recalling that   
$\frk g_r$ vanishes outside $\mcl B_r^M(\BB v) \subset M_{ r^{K } }$.
By Lemma~\ref{lem-energy-compare} and~\eqref{eqn-use-ball-iso}, there exists a universal constant $p_0 >0$ (equal to the constant $p+q$ from Lemma~\ref{lem-energy-compare}) such that with probability $1-O_r(r^{-1})$, 
\allb \label{eqn-map-eff-res-compare}
  \op{Energy}\left( \frk g_r \circ \psi_{r^{K } } ;  \mcl G \right)  \leq  C_1^2 (\log r)^{p_0} \op{Energy}\left( \frk g_r  ;  M \right)  .
\alle

By~\eqref{eqn-map-coupling-M} and~\eqref{eqn-use-ball-iso}, it holds with probability $1-O_r(r^{-1})$ that 
\eqb \label{eqn-map-eff-res-contain}
 \psi_{r^{K }}^{-1}\left( \mcl B_r^M (\BB v    ) \right) \subset \mcl B_{r^2}^{\mcl G}(0  )  .
\eqe 
Since $\psi_{r^{K } }(0) = \BB v$, if~\eqref{eqn-map-eff-res-contain} holds then the function $\frk g_r \circ \psi_{r^{K } }$ vanishes outside $\mcl B_{r^2}^{\mcl G}(0  )$ and equals $1$ at the origin. 
Since the discrete harmonic function $\frk f_{r^2}$ minimizes Dirichlet energy subject to specified boundary data, we infer from~\eqref{eqn-map-eff-res-contain} and~\eqref{eqn-map-eff-res-compare} that with probability $1-O_r(r^{-1})$, 
\allb \label{eqn-map-eff-res-compare'}
 \op{Energy}\left( \frk f_{r^2}  ;  \mcl G \right)  \leq  C_1^2 (\log r)^{p_0} \op{Energy}\left( \frk g_r  ;  M \right)  .
\alle 
We obtain~\eqref{eqn-map-eff-res} with $p=p_0+1$ by applying~\eqref{eqn-map-eff-res-dirichlet} and Proposition~\ref{prop-sg-eff-res} to bound $ \op{Energy}\left( \frk f_{r^2} ;  \mcl G \right)$ from below, then plugging the resulting estimate into~\eqref{eqn-map-eff-res-compare} and again using~\eqref{eqn-map-eff-res-dirichlet}.
\end{proof}

\begin{proof}[Proof of Theorem~\ref{thm-map-green}]
The bound~\eqref{eqn-map-eff-res0} for the Green's function at the exit time $\sigma_r$ from $\mcl B_r^M(\BB v)$ is immediate from Proposition~\ref{prop-map-eff-res}, the definition~\eqref{eqn-eff-res-def} of effective resistance, and the fact that $\op{deg}^M(\BB v)$ has an exponential tail. 
Since random walk on $M$ started from $\BB v$ cannot exit $\mcl B_n^M(\BB v  )$ before time $n$, 
\eqbn
\op{Gr}_n^M(\BB v , \BB v ) \leq \op{Gr}_{\sigma_n}^M(\BB v , \BB v)   .
\eqen 
Therefore, the desired bound~\eqref{eqn-map-green} for an appropriate choice of $\alpha, C$, and $p$ follows from~\eqref{eqn-map-eff-res0}.
\end{proof}

One similarly obtains from Proposition~\ref{prop-map-eff-res} the upper bound in~\eqref{eqn-green} of Theorem~\ref{thm-green}, which we re-state as the following lemma.

\begin{lem} \label{lem-green-upper}
For $\gamma \in (0,2)$, there exists $\alpha  = \alpha(\gamma) > 0$ and $C = C(\gamma) > 0$ such that for $n\in\BB N$, the $\gamma$-mated-CRT map satisfies
\eqbn
 \BB P\left[ \frac{ \op{Gr}_n^{\mcl G}(\BB v ,\BB v ) }{ \op{deg}^{\mcl G}\left( \BB v   \right)} \leq C \log n  \right] =   1- O_n\left( \frac{1}{(\log n)^\alpha} \right) .   
\eqen 
\end{lem}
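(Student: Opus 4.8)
The plan is to obtain this as the mated-CRT-map special case of the upper bound in~\eqref{eqn-map-green} of Theorem~\ref{thm-map-green}, repeating that proof with Proposition~\ref{prop-sg-eff-res} in place of Proposition~\ref{prop-map-eff-res}. That is, I would dominate the fixed-time Green's function $\op{Gr}_n^{\mcl G}(0,0)$ by the Green's function $\op{Gr}_{\sigma_n}^{\mcl G}(0,0)$ at the exit time of a concentric graph-metric ball and then invoke the effective-resistance estimate already proven in Section~\ref{sec-eff-res}. Recall that $\BB v = 0$ for the mated-CRT map.

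First I would record the elementary observation that $X^{\mcl G}$ started from $0$ changes its graph distance to $0$ by at most $1$ per step, so it cannot leave $\mcl B_n^{\mcl G}(0)$ before time $n$. Hence, with $\sigma_n$ the exit time from $\mcl B_n^{\mcl G}(0)$ as in~\eqref{eqn-exit-time-def}, we have $\sigma_n \geq n$, and therefore $\op{Gr}_n^{\mcl G}(0,0) \leq \op{Gr}_{\sigma_n}^{\mcl G}(0,0)$. (The right side is a.s.\ finite since $\mcl G$ is a.s.\ an infinite connected graph, consecutive integers always being adjacent, so the walk a.s.\ exits every finite ball.)

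Next I would apply Proposition~\ref{prop-sg-eff-res} with $r = n$, which supplies $\alpha = \alpha(\gamma) > 0$ and $C_0 = C_0(\gamma) > 0$ with $\mcl R^{\mcl G}(0 \leftrightarrow \bdy\mcl B_n^{\mcl G}(0)) \leq C_0 \log n$ on an event of probability $1 - O_n((\log n)^{-\alpha})$. By the second equality in~\eqref{eqn-eff-res-def}, applied with $V = \mcl V(\mcl G) \setminus \mcl V(\mcl B_n^{\mcl G}(0))$ (whose hitting time is exactly $\sigma_n$), we have $\op{Gr}_{\sigma_n}^{\mcl G}(0,0) = \op{deg}^{\mcl G}(0)\, \mcl R^{\mcl G}\!\left(0 \leftrightarrow \mcl V(\mcl G) \setminus \mcl V(\mcl B_n^{\mcl G}(0))\right)$; and since every path from $0$ to the complement of the ball must pass through $\bdy\mcl B_n^{\mcl G}(0)$, extending the optimal unit flow from $0$ to $\bdy\mcl B_n^{\mcl G}(0)$ by routing the flow arriving at each boundary vertex one further step across a single edge into the complement shows (via Thomson's principle~\eqref{eqn-thomson}, the extra energy being at most $1$ since the re-routed flow values are nonnegative and sum to $1$) that $\mcl R^{\mcl G}\!\left(0 \leftrightarrow \mcl V(\mcl G) \setminus \mcl V(\mcl B_n^{\mcl G}(0))\right) \leq \mcl R^{\mcl G}(0 \leftrightarrow \bdy\mcl B_n^{\mcl G}(0)) + 1$. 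Combining the three inequalities, on the above event $\op{Gr}_n^{\mcl G}(0,0)/\op{deg}^{\mcl G}(0) \leq C_0 \log n + 1$, which is at most $C \log n$ for a suitable $C = C(\gamma)$ after enlarging $C$ to absorb small~$n$. This is the assertion.

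I do not anticipate any genuine obstacle: all the substance is contained in Proposition~\ref{prop-sg-eff-res}, and the remaining steps are the routine domination of the fixed-time Green's function by the Green's function at the exit time of a ball of the same radius, plus the trivial additive-constant comparison between the effective resistance to the inner boundary of a metric ball and to the complement of that ball.
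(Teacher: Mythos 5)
Your proof is correct and takes essentially the same route as the paper's one-line argument (dominate $\op{Gr}_n^{\mcl G}(0,0)$ by $\op{Gr}_{\sigma_n}^{\mcl G}(0,0)$ using the observation that the walk cannot leave $\mcl B_n^{\mcl G}(0)$ in fewer than $n$ steps, then invoke the effective-resistance upper bound). You correctly source the effective-resistance input from Proposition~\ref{prop-sg-eff-res} --- the natural statement for the mated-CRT map --- whereas the paper cites Proposition~\ref{prop-map-eff-res}, which is literally stated only for the other five models (a minor imprecision, since its proof rests on Proposition~\ref{prop-sg-eff-res}); and you are more careful than the paper about the small gap between $\sigma_n$, the hitting time of $\mcl V(\mcl G)\setminus \mcl V(\mcl B_n^{\mcl G}(0))$, and the hitting time of the inner boundary $\bdy\mcl B_n^{\mcl G}(0)$, resolving it with a sound unit-flow extension argument.
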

\begin{proof}
This follows from the upper bound in Proposition~\ref{prop-sg-eff-res} and the fact that random walk on $\mcl G$ started from $0$ cannot exit $\mcl B_n^{\mcl G}(0)$ before time $n$.
\end{proof}

\begin{proof}[Proof of Theorems~\ref{thm-map-return} and~\ref{thm-map-spd}] 
To prove Theorem~\ref{thm-map-return}, we observe that~\cite[Proposition~10.18]{markov-mixing} shows that $j \mapsto \ol{\BB P}_{\BB v}^M \left[ X_{2j}^M = \BB v \right]$ is non-increasing. 
Hence,
\allb \label{eqn-spd-upper-mono}
\ol{\BB P}_{\BB v}^M \left[ X_{2n}^M = \BB v \right]
&\leq  \frac{1}{n} \sum_{j=1}^n \ol{\BB P}_{\BB v}^M \left[ X_{2j}^M = \BB v \right]
\leq \frac{1}{n} \ol{\BB E}_{\BB v}^M\left[ \# \left\{ j \in [1,2n]_{\BB Z} \,:\, X_j^M = \BB v \right\} \right] \notag\\
&= \frac{1}{n} \op{Gr}_{2n}^M (\BB v , \BB v ) .
\alle 
Combining this with Theorem~\ref{thm-map-green} yields~\eqref{eqn-map-return}. In the case when $M$ is a mated-CRT map, we see that we can take $p=1$ by using Lemma~\ref{lem-green-upper} in place of Theorem~\ref{thm-map-green}. 

As noted just before the statement of Theorem~\ref{thm-map-spd}, the return probability bound~\eqref{eqn-map-spd-bound} follows by combining Theorem~\ref{thm-map-return} and Lemma~\ref{lem-return-lower}. We now deduce from~\eqref{eqn-map-spd-bound} that the spectral dimension of $M$ is a.s.\ equal to 2. For $n\in\BB N$, let 
$P(n) :=   \ol{\BB P}_{\BB v}^M \left[ X_{2n}^M = \BB v \right]$. 
Also let $\alpha $ and $c$ be the constants from~\eqref{eqn-map-spd-bound}, fix $s > 1/\alpha$, and for $k\in\BB N$, let $n_k := \exp(k^s)$.  
By~\eqref{eqn-map-spd-bound} and the Borel-Cantelli lemma, it is a.s.\ the case that 
\eqb \label{eqn-spd-subsequence}
-2 \lim_{k\to\infty} \frac{ \log P(n_k)}{\log n_k} = 2 . 
\eqe  
By~\cite[Proposition~10.18]{markov-mixing}, if $n\in [n_k , n_{k+1}]$ then $P(n_k) \leq P(n) \leq P(n_{k+1})$. 
Since also $\lim_{k\to\infty} ( \log n_k ) / (\log n_{k+1})  =1 $, the convergence~\eqref{eqn-spd-subsequence} implies that
$-2 \lim_{n\to\infty}  \log P(n)/\log n = 2 $.
\end{proof}

It remains to prove our displacement lower bound (Theorem~\ref{thm-map-displacement}) and the lower bound for the Green's function on $\mcl G$ from Theorem~\ref{thm-green}.  For each of these two proofs, we will need the following degree bound.

\begin{lem} \label{lem-map-max-deg}
If $(M,\BB v)$ is any one of the five random planar maps considered in Section~\ref{sec-main-results}, 
then for each $A>0$, there exists $C   > 0$ (depending on the particular model) such that 
\eqb \label{eqn-map-max-deg}
\BB P\left[ \max_{v\in \mcl V \mcl B_r^M(\BB v ) } \op{deg}^M\left( v   \right) \leq C \log r \right] = 1 - O_r(r^{-A}) .
\eqe 
\end{lem}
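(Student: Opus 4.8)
The plan is to derive the bound from the exponential tail of $\op{deg}^M(\BB v)$ (recorded in Section~\ref{sec-main-results}) by a mass-transport argument, using that each of the five maps $(M,\BB v)$ arises as a Benjamini--Schramm limit of finite planar maps and is therefore a unimodular random rooted graph, so that the mass-transport principle applies. The one extra input needed is a crude volume bound: for each $A>0$ there should be a $K=K(A)>0$ with $\BB P[\#\mcl V\mcl B_{2r}^M(\BB v)>r^K]=O_r(r^{-A})$. I would extract this from the mating-of-trees encoding, namely from the fact that the ball $\mcl B_{2r}^M(\BB v)$ is contained in the finite map $M_{r^{K'}}$ of Section~\ref{sec-coupling} --- which has $O(r^{K'})$ vertices --- with probability $1-O_r(r^{-A})$ for a suitable $K'$, using the rough-isometry estimates of~\cite{ghs-map-dist} (in the UIPT case one may instead quote the peeling estimates of~\cite{angel-peeling}).

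Given these inputs, I would fix $A>0$ and the corresponding $K$, pick a constant $C>0$ to be chosen, and apply the mass-transport principle to the transport function
\eqbn
f(G,x,y) := \BB 1\!\left[\op{deg}^G(y)\ge C\log r\right]\,\BB 1\!\left[\op{dist}^G(x,y)\le r\right]\,\BB 1\!\left[\#\mcl V\mcl B_r^G(y)\le r^K\right] ,
\eqen
which depends only on the isomorphism class of $(G,x,y)$ and on the deterministic parameters $r,C,K$. Sending mass from $\BB v$ counts the vertices $v\in\mcl V\mcl B_r^M(\BB v)$ with $\op{deg}^M(v)\ge C\log r$ and $\#\mcl V\mcl B_r^M(v)\le r^K$, while the mass received at $\BB v$ is at most $r^K\,\BB 1[\op{deg}^M(\BB v)\ge C\log r]$; hence the mass-transport identity together with the degree tail $\BB P[\op{deg}^M(\BB v)\ge t]\le C_1 e^{-c_1 t}$ gives
\eqbn
\BB E\Bigl[\#\bigl\{v\in\mcl V\mcl B_r^M(\BB v): \op{deg}^M(v)\ge C\log r,\ \#\mcl V\mcl B_r^M(v)\le r^K\bigr\}\Bigr]\ \le\ r^K\,\BB P[\op{deg}^M(\BB v)\ge C\log r]\ \le\ C_1 r^{K-c_1 C} .
\eqen
Choosing $C\ge (K+A)/c_1$ makes the right side at most $r^{-A}$, so by Markov's inequality the event that some $v\in\mcl V\mcl B_r^M(\BB v)$ has $\op{deg}^M(v)\ge C\log r$ \emph{and} $\#\mcl V\mcl B_r^M(v)\le r^K$ has probability $O_r(r^{-A})$.

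Finally I would remove the auxiliary constraint: if instead some $v\in\mcl V\mcl B_r^M(\BB v)$ has $\op{deg}^M(v)\ge C\log r$ but $\#\mcl V\mcl B_r^M(v)>r^K$, then $\mcl B_r^M(v)\subseteq\mcl B_{2r}^M(\BB v)$ forces $\#\mcl V\mcl B_{2r}^M(\BB v)>r^K$, which has probability $O_r(r^{-A})$ by the volume bound. Summing the two contributions yields $\BB P[\max_{v\in\mcl V\mcl B_r^M(\BB v)}\op{deg}^M(v)\ge C\log r]=O_r(r^{-A})$, i.e.\ \eqref{eqn-map-max-deg}. The main obstacle is really just the volume input: one needs the metric ball to have polynomially many vertices except on an event of superpolynomially small probability, with $K$ allowed to grow with $A$; after that the argument is a routine application of the mass-transport principle and Markov's inequality.
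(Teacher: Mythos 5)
Your argument is correct but takes a genuinely different route from the paper's. The paper proves this by a direct union bound: it observes that the encoding walk (or Brownian motion) has stationary increments, so the degree of the vertex $\psi(j)$ has the same exponential-tail law for every time index $j$, takes a union bound over the $O(r^K)$ times in $[-r^K,r^K]_{\BB Z}$ labelling the vertices of $M_{r^K}$, and then uses~\cite[Lemma~1.11]{ghs-map-dist} to contain $\mcl B_r^M(\BB v)$ in $M_{r^K}$ with probability $1-O_r(r^{-A})$. You instead route the argument through the mass-transport principle, using only unimodularity, the exponential degree tail, and a polynomial volume bound for $\mcl B_{2r}^M(\BB v)$; the volume bound comes from the same source,~\cite[Lemma~1.11]{ghs-map-dist}. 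In spirit the two proofs use the same stationarity---the MTP for these models is a consequence of the shift-invariance of the encoding walk---but your version is more structure-agnostic: it would apply verbatim to any unimodular random planar map with exponential degree tail and superpolynomially-likely polynomial volume growth, whereas the paper's version leans directly on the mating-of-trees indexing. One point worth tightening: the first five maps are rooted at the tail of a distinguished oriented edge, so the law of the root is the degree-biased version of the unimodular measure, not the unimodular measure itself. This is easy to fix (the degree-biased and unimodular measures are mutually absolutely continuous with exponential-tail Radon--Nikodym derivatives, so one loses only a constant in the polynomial rate via Cauchy--Schwarz; alternatively one can use the reversible/edge-rooted form of the MTP with weights $1/\op{deg}$), and the paper's Appendix~\ref{sec-return-lower} performs exactly this kind of re-weighting in a related argument. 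For the mated-CRT map, rooting at $0\in\BB Z$ together with translation invariance of the Brownian motion gives the MTP directly without any biasing.
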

\begin{proof} 
Since $\op{deg}^M(\BB v   )$ has an exponential tail and the walk (or the Brownian motion in the case of the mated-CRT map) which encodes $M$ has stationary increments, a union bound over all vertices of $M_{r^K}$ shows that for each $A  , K > 0$ that there exists $C = C(A,K  ) > 0$ such that 
\eqb \label{eqn-map-max-deg0}
\BB P\left[ \max_{v\in \mcl V( M_{r^K} )} \op{deg}^M\left( v    \right) \leq C \log r \right] = 1 - O_r(r^{-A}) .
\eqe 
The statement of the lemma follows by combining this with~\cite[Lemma~1.11]{ghs-map-dist}. 
\end{proof}

The following lemma will allow us to deduce the lower bound for displacement~\eqref{eqn-map-exit-prob} from our upper bound for return probability. 

\begin{lem} \label{lem-map-spd-compare}
If $(M,\BB v)$ is any one of the five random planar maps considered in Section~\ref{sec-main-results}, then there exists $\alpha = \alpha(\gamma) > 0$ and $C = C(\gamma)  > 0$ such that for each $r \in\BB N$, it holds with probability $1-O_r( r^{-\alpha})$ that
\eqb
\label{eqn-map-spd-compare}
 \ol{\BB P}_{\BB v}^M\left[ X^M_n  \in \mcl B_r^M(\BB v) \right]^2 \leq  C ( \log r) (\# \mcl V \mcl B_r^M(\BB v) )  \ol{\BB P}_{\BB v}^M\left[ X_{2n}^M = \BB v \right] ,\quad \forall n \in \BB N .
\eqe
\end{lem}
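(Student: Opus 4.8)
The plan is to combine the Cauchy--Schwarz inequality with reversibility of simple random walk and the degree bound of Lemma~\ref{lem-map-max-deg}. Write $p_n^M(x,y) := \ol{\BB P}_x^M[X_n^M = y]$ for the $n$-step transition probability of simple random walk on $M$, and abbreviate $\mcl B_r := \mcl B_r^M(\BB v)$. The left side of~\eqref{eqn-map-spd-compare} equals $\big(\sum_{y\in\mcl V\mcl B_r} p_n^M(\BB v,y)\big)^2$, and by Cauchy--Schwarz this is at most $(\#\mcl V\mcl B_r)\sum_{y\in\mcl V\mcl B_r} p_n^M(\BB v,y)^2$. Thus it suffices to bound $\sum_{y\in\mcl V\mcl B_r} p_n^M(\BB v,y)^2$ by a constant times $(\log r)\,\ol{\BB P}_{\BB v}^M[X_{2n}^M = \BB v]$ with high probability.

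Next I would use that simple random walk is reversible with respect to the degree measure, so that $\op{deg}^M(y)\, p_n^M(y,\BB v) = \op{deg}^M(\BB v)\, p_n^M(\BB v,y)$ for every $y$. Decomposing the return probability over the location at time $n$ gives
\[
\ol{\BB P}_{\BB v}^M[X_{2n}^M = \BB v] = \sum_{y\in\mcl V(M)} p_n^M(\BB v,y)\,p_n^M(y,\BB v) = \op{deg}^M(\BB v)\sum_{y\in\mcl V(M)} \frac{p_n^M(\BB v,y)^2}{\op{deg}^M(y)} .
\]
Discarding the terms with $y\notin\mcl V\mcl B_r$ and using $\op{deg}^M(\BB v)\geq 1$ yields $\sum_{y\in\mcl V\mcl B_r} p_n^M(\BB v,y)^2/\op{deg}^M(y) \leq \ol{\BB P}_{\BB v}^M[X_{2n}^M = \BB v]$, and therefore
\[
\sum_{y\in\mcl V\mcl B_r} p_n^M(\BB v,y)^2 \leq \Big(\max_{y\in\mcl V\mcl B_r}\op{deg}^M(y)\Big)\,\ol{\BB P}_{\BB v}^M[X_{2n}^M = \BB v] .
\]

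Finally, I would apply Lemma~\ref{lem-map-max-deg} with, say, $A=1$, to get a constant $C = C(\gamma)>0$ such that with probability $1-O_r(r^{-1})$ one has $\max_{y\in\mcl V\mcl B_r}\op{deg}^M(y)\leq C\log r$. Crucially, this event depends only on the degrees of vertices of $\mcl B_r$ and not on $n$, so on this event the two displayed bounds can be chained for every $n\in\BB N$ simultaneously, giving~\eqref{eqn-map-spd-compare} with $\alpha = 1$ (and a possibly enlarged $C$). I do not expect a genuine obstacle here; the only point requiring a moment's care is that the high-probability event must be uniform in $n$, which is automatic because it is a statement about the map $M$ rather than about the walk.
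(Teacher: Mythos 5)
Your argument is correct and uses the same three ingredients as the paper's proof: the Cauchy--Schwarz inequality, reversibility of simple random walk with respect to the degree measure, and the uniform degree bound of Lemma~\ref{lem-map-max-deg}. The only cosmetic difference is that you apply Cauchy--Schwarz with unit weights and then absorb the degree factor afterward, whereas the paper applies a degree-weighted Cauchy--Schwarz directly; the two routes are algebraically equivalent and yield the same bound.
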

\begin{proof} 
By Lemma~\ref{lem-map-max-deg}, there exists $C_0 =C_0(\gamma) > 0$ such that with probability $1-O_r(r^{-1})$, 
\eqb \label{eqn-map-spd-lower-deg}
\max_{v\in \mcl V\mcl B_r^M(\BB v )   } \op{deg}^M\left( v  \right)  \leq C  \log r .
\eqe 
We now employ a classical calculation for random walk based on reversibility and the Cauchy-Schwarz inequality to find that if~\eqref{eqn-map-spd-lower-deg} holds then for $r\in\BB N$,  
\allb \label{eqn-map-cauchy-schwarz-reverse} 
\ol{\BB P}_{\BB v}^M\left[ X_{2n}^M  = \BB v \right] 
&\geq \sum_{v \in \mcl V \mcl B_r^M(\BB v ) } \ol{\BB P}_{\BB v}^M\left[ X_n^M  = v \right] \ol{\BB P}_{ v}^M\left[ X_n^M  = \BB v  \right] \notag \\
&=  \op{deg}^M\left( \BB v  \right)  \sum_{v \in \mcl V\mcl B_r^M(\BB v  )  } \frac{ \ol{\BB P}_{\BB v}^M\left[ X_n^M  = v  \right]^2  }{  \op{deg}^M\left(v    \right)   }  \quad\text{(by reversibility)}\notag\\
&\geq \left(  \sum_{v\in \mcl V\mcl B_r^M(\BB v )  } \op{deg}^M(v   ) \right)^{-1}  \left(   \sum_{v\in \mcl V\mcl B_r^M(\BB v  )   }  \ol{\BB P}_{\BB v}^M\left[ X_n^M  = v  \right] \right)^2  \notag\\
&\qquad \qquad \qquad \qquad \text{(by Cauchy-Schwarz)}\notag\\ 
&\geq  \frac{ \ol{\BB P}_{\BB v}^M\left[ X_n^M  \in   \mcl B_r^M(\BB v  )   \right]^2  }{ C  (\log r) (\# \mcl V  \mcl B_r^M(\BB v ) )            }   ,
\alle 
which gives~\eqref{eqn-map-spd-compare}. 
\end{proof}

\begin{proof}[Proof of Theorems~\ref{thm-map-displacement} and Theorem~\ref{thm-uipt-displacement}]
We first prove the upper bound~\eqref{eqn-map-exit-mean} for the expected exit time $\sigma_r$ from $\mcl B_r^M(\BB v)$. 
Using the reversibility of the Green's function $\op{Gr}_{\sigma_r}^M(\cdot, \cdot)$~\cite[Exercise~2.1]{lyons-peres}, we find that for each vertex $v$ of $\mcl B_r^M(\BB v  )$, 
\allb
\op{Gr}_{\sigma_r}^M(\BB v , v) 
&= \frac{\op{deg}^M(v  )}{\op{deg}^M(\BB v )} \op{Gr}_{\sigma_r}^M(v , \BB v  ) \notag\\
&= \frac{\op{deg}^M(v  )}{\op{deg}^M(\BB v )} \ol{\BB P}_v^M \left[ \text{$X^M$ hits $\BB v$ before exiting $\mcl B_r^M(\BB v   )$} \right] \op{Gr}_{\sigma_r}^M(\BB v , \BB v) \notag\\
&\leq \op{deg}^M(v ) \frac{\op{Gr}_{\sigma_r}^M(\BB v , \BB v) }{\op{deg}^M(\BB v  )} = \op{deg}^M(v )\mcl R^M\left(  \BB v \leftrightarrow \bdy \mcl B_r^M(\BB v )    \right) . \label{eqn-g-off-diagonal}
\alle
By summing the inequality~\eqref{eqn-g-off-diagonal} over each vertex $v$ of $\mcl B_r^M(\BB v  )$, we consequently see that
\eqb \label{eqn-map-ball-sum}
\ol{\BB E}_{\BB v}^M \left[ \sigma_r \right] 
\leq \mcl R^M\left(  \BB v \leftrightarrow \bdy \mcl B_r^M(\BB v )   \right) \sum_{v\in \mcl V\mcl B_r^M(\BB v ) } \op{deg}^M(v  ) .
\eqe
By applying Proposition~\ref{prop-map-eff-res} and Lemma~\ref{lem-map-max-deg} to bound the right side of~\eqref{eqn-map-ball-sum}, we obtain that for an appropriate choice of $C$ and $p$,~\eqref{eqn-map-exit-mean} holds with probability $1-O_r((\log r)^{-\alpha})$. 

To prove~\eqref{eqn-map-exit-prob}, we combine Theorem~\ref{thm-map-return} and Lemma~\ref{lem-map-spd-compare} to get that for appropriate constants $\alpha > 0$, $C >0$, and $p>1$, it holds for each $n , r\in\BB N$ that with probability $1-O_r((\log r)^{-\alpha}) - O_n( (\log n)^{-\alpha})$,   
\eqb \label{eqn-use-map-spd-compare}
 \ol{\BB P}_{\BB v}^M\left[ X^M_n  \in \mcl B_r^M(\BB v) \right]^2  \leq  \frac{C}{n} (\log n)^{p } ( \log r)  (\# \mcl V  \mcl B_r^M(\BB v) )     .
\eqe 
We now choose $r_n \to\infty$ such that $\BB P[ \# \mcl V  \mcl B_{r_n}^M(\BB v) \leq n (\log n)^{-p-1} ] \to 1$ as $n\to\infty$, as in the theorem statement, and note that we can take $r_n$ to grow faster than some positive power of $n$ due to~\cite[Theorem 1.10]{ghs-dist-exponent} (in the case of the mated-CRT map) or~\cite[Theorem 1.6]{ghs-map-dist} (in the case of other maps). Plugging this choice of $r_n$ into~\eqref{eqn-use-map-spd-compare} gives~\eqref{eqn-map-exit-prob} (with a possibly larger choice of $C$ and with $p+1$ in place of $p$). 

Theorem~\ref{thm-uipt-displacement} follows from Theorem~\ref{thm-map-displacement}, the fact that $\BB P[ \# \mcl V \mcl B_r^M(\BB v) \leq (\log r)^7 r^4  ] \to 1$ as $r\to\infty$ in the UIPT case~\cite[Theorem 1.2]{angel-peeling}, and the fact that $\BB P[ \# \mcl V \mcl B_r^M(\BB v) \leq (\log r)^q r^4 ] \to 1$ as $r\to\infty$ for some $q>0$ in the mated-CRT map case with $\gamma=\sqrt{8/3}$ (this follows from~\cite[Theorem 1.9 and Lemma 1.11]{ghs-map-dist} together with the analogous statement in the UIPT case). 
\end{proof}

\begin{proof}[Proof of Theorem~\ref{thm-green}]
The estimate~\eqref{eqn-eff-res0} is immediate from Proposition~\ref{prop-sg-eff-res}. 
The upper bound for the Green's function in~\eqref{eqn-green} is proven in Lemma~\ref{lem-green-upper}, so we just need to show that for an appropriate $\alpha > 0$ and $C>1$ depending only on $\gamma$, one has
\eqb \label{eqn-green-lower}
\BB P\left[  \frac{ \op{Gr}_n^{\mcl G}(0,0 ) }{ \op{deg}^{\mcl G}\left(0  \right)} \geq \frac{1}{C} \log n   \right] =   1- O_n\left( \frac{1}{(\log n)^\alpha} \right) .   
\eqe

To this end, let $\sigma_r$ for $r\in\BB N$ be the exit time of the simple random walk $X^{\mcl G}$ from $\mcl B_r^{\mcl G}(0)$, as in~\eqref{eqn-exit-time-def}.
By Proposition~\ref{prop-sg-eff-res}, there exists $C_0=C_0(\gamma) > 0$ such that for $r \in \BB N$ it holds except on an event of probability decaying faster than some positive ($\gamma$-dependent) power of $(\log r)^{-1}$ that
\eqb \label{eqn-exit-green-lower}
\op{Gr}_{\sigma_r}^{\mcl G}(0,0)   
\geq \frac{1}{C_0} \op{deg}^{\mcl G}(0) \log r .
\eqe
By the strong Markov property, under $\ol{\BB P}_0^{\mcl G}$ the number of times that $X^{\mcl G}$ returns to 0 before time $\sigma_r$ is a geometric random variable with mean $\op{Gr}_{\sigma_r}^{\mcl G}(0,0) $, so there is a constant $p=p(\gamma) > 0$ such that whenever~\eqref{eqn-exit-green-lower} holds,
\eqb \label{eqn-hit-count-lower}
\ol{\BB P}_0^{\mcl G} \left[ \text{$X^{\mcl G}$ returns to $0$ at least $ \op{deg}^{\mcl G}(0) \log r$ times before time $\sigma_r$} \right] 
\geq p.
\eqe 

By~\cite[Corollary 3.2]{ghs-dist-exponent}, there exits $\beta =\beta(\gamma) > 0$ small enough so that with probability $1-O_n(n^{-1})$, we have $\# \mcl V  \mcl B_{n^\beta}^{\mcl G}(0) \leq n^{1/2}$.
This estimate together with Theorem~\ref{thm-map-return} shows that the right side of~\eqref{eqn-map-spd-compare} with $r = n^\beta$ is at most $n^{-1/2+o_n(1)}$ with probability at least $1-O_n(n^{-1})$. By this and Lemma~\ref{lem-map-spd-compare}, we get that with probability $1-O_n(n^{-1})$,
\eqb \label{eqn-use-walk-lower}
\ol{\BB P}_0^{\mcl G}\left[  \sigma_{n^\beta} \leq n  \right] \geq \ol{\BB P}_0^{\mcl G}\left[  X^{\mcl G}_n \notin \mcl B_{n^\beta}^{\mcl G}(0)  \right]  \geq 1 -  \frac{p}{2}.
\eqe 
Combining~\eqref{eqn-hit-count-lower} (applied with $r = n^\beta$) and~\eqref{eqn-use-walk-lower} shows that with probability $1-O_n( (\log n)^{-\alpha})$, the $\ol{\BB E}_0^{\mcl G}$-expected number of times that $X^{\mcl G}$ hits $0$ before time $n$ is at least $(p/2) \beta \op{deg}^{\mcl G}(0) \log n  $, which gives~\eqref{eqn-green-lower} with $C = ((p/2)\beta)^{-1}$. 
\end{proof}

\appendix

\section{Lower bound for return probability on non-simple maps}
\label{sec-return-lower}

The works \cite{lee-conformal-growth,lee-uniformizing} by Lee prove a lower bound for the return probability to the root vertex for random walk on a local limit of finite random planar maps \emph{without multiple edges or self-loops} whose root vertex degree has an exponential tail. Some of the maps we consider in this paper are allowed to have multiple edges and/or self-loops, so here we explain why the results of~\cite{lee-conformal-growth} extend to this case.

\begin{lem} \label{lem-return-lower}
Suppose $(M,\BB v)$ is the Benjamini-Schramm limit of finite random planar maps with a uniformly random root vertex and that $\op{deg}^M(\BB v)$ has an exponential tail (e.g., $(M,\BB v)$ is one of the rooted planar maps considered in Section~\ref{sec-main-results}). There are constants $\alpha, p > 0$, depending on the particular law of $(M,\BB v)$ such that 
\eqb \label{eqn-return-lower}
\BB P\left[ \ol{\BB P}_{\BB v}^M\left[ X_{2n}^M = \BB v \right] \geq \frac{1}{n (\log n)^p} \right] = 1-O_n((\log n)^{-\alpha}) .
\eqe  
\end{lem}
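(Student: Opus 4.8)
The plan is to reduce the statement to the case of \emph{simple} maps — maps with no self-loops and no multiple edges — to which the results of Lee~\cite{lee-conformal-growth,lee-uniformizing} apply directly, and then to transfer the resulting return-probability bound back to $M$. Given $(M,\BB v)$, let $M'$ be the planar map obtained from $M$ by replacing every edge by a path of length $3$; thus every vertex of $M$ is a vertex of $M'$, and each edge of $M$ contributes two new degree-$2$ vertices, which we call \emph{midpoints}. Subdividing each edge into a path of length $3$ turns every self-loop into a triangle and every family of parallel edges into internally disjoint paths whose common endpoints are non-adjacent, so $M'$ is simple (length $2$ would suffice to kill multiple edges but not self-loops, which is why we use $3$); it is planar, and $\deg^{M'}(\BB v)=\deg^M(\BB v)$, so the root degree of $M'$ still has an exponential tail. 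If $M$ is the Benjamini--Schramm limit of finite planar maps $(M_k,\rho_k)$ with $\rho_k$ uniform, then the $M_k'$ are finite simple planar maps, and since the average degree of $M_k$ is bounded (so $\#\mcl E(M_k)\asymp\#\mcl V(M_k)$) a uniformly random vertex of $M_k'$ is a vertex of $M_k$ with probability bounded away from $0$. Hence $(M',\BB v)$ is the Benjamini--Schramm limit of $(M_k',\text{uniform root})$ conditioned on the positive-probability event that the root is not a midpoint. Applying Lee's return-probability lower bound to this Benjamini--Schramm limit and conditioning on that event, we obtain constants $p',\alpha'>0$ such that for each $m\in\BB N$, with probability $1-O_m((\log m)^{-\alpha'})$ over the law of $M$,
\eqb
\ol{\BB P}_{\BB v}^{M'}\left[X_{2m}^{M'}=\BB v\right]\geq\frac{1}{m(\log m)^{p'}}.
\eqe

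It remains to deduce, deterministically in the realization of $M$, a comparable lower bound for $\ol{\BB P}_{\BB v}^M[X_{2n}^M=\BB v]$ with $n\asymp m$. Watch the walk $X^{M'}$ only at the successive times $0=T_0<T_1<T_2<\cdots$ at which it visits a vertex of $M$. Because the two midpoints on a subdivided edge are ``the same'' for every edge, a gambler's-ruin computation on the path $\{0,1,2,3\}$ shows that the pairs consisting of the number of $M'$-steps in the $i$-th excursion and whether that excursion returns to its starting vertex or advances to the other endpoint form an i.i.d.\ sequence that is independent of the sequence of vertices visited. Consequently $X^{M'}_{T_i}=Z_{N_i}$, where $Z$ is simple random walk on $M$ from $\BB v$, $N_i$ is the number of the first $i$ excursions that advance, and $(T_i,N_i)_{i\geq0}$ is built from that i.i.d.\ sequence and is independent of $Z$. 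Since $X^{M'}$ is at a vertex of $M$ precisely at the times $T_i$, and the $T_i$ are strictly increasing,
\eqb
\ol{\BB P}_{\BB v}^{M'}\left[X_{2m}^{M'}=\BB v\right]=\sum_{j\geq0}\ol{\BB P}_{\BB v}^M\left[X_j^M=\BB v\right]w_j,
\eqe
where $w_j:=\BB P[\exists\, i:T_i=2m,\ N_i=j]$ and $\sum_j w_j=\BB P[2m\text{ is a renewal time of }(T_i)_{i\geq0}]\leq1$.

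The crux is to estimate the last sum without losing a power of $m$. Since $\BB E[T_i]=3i$ and $N_i/i\to1/3$, a standard large-deviations estimate gives a constant $C>0$ (and then $c>0$) with $\sum_{j<2m/9-C\sqrt m\log m}w_j\leq e^{-c(\log m)^2}$. Now $i\mapsto\ol{\BB P}_{\BB v}^M[X_{2i}^M=\BB v]$ is non-increasing by~\cite[Proposition~10.18]{markov-mixing}, and $\ol{\BB P}_{\BB v}^M[X_j^M=\BB v]\leq\ol{\BB P}_{\BB v}^M[X_{2\lfloor j/2\rfloor}^M=\BB v]$ for every $j$ by Cauchy--Schwarz together with this monotonicity. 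Hence, letting $2n$ be the largest even integer less than $2m/9-C\sqrt m\log m$, the part of the sum over $j\geq2n$ is at most $\ol{\BB P}_{\BB v}^M[X_{2n}^M=\BB v]\sum_j w_j\leq\ol{\BB P}_{\BB v}^M[X_{2n}^M=\BB v]$, while the part over $j<2n$ is at most $\sum_{j<2n}w_j\leq e^{-c(\log m)^2}$, so that
\eqb
\ol{\BB P}_{\BB v}^{M'}\left[X_{2m}^{M'}=\BB v\right]\leq\ol{\BB P}_{\BB v}^M\left[X_{2n}^M=\BB v\right]+e^{-c(\log m)^2}.
\eqe
Combining this with the lower bound for $\ol{\BB P}_{\BB v}^{M'}[X_{2m}^{M'}=\BB v]$ and using that $m\asymp n$ and $e^{-c(\log m)^2}$ is negligible compared with $\tfrac{1}{m(\log m)^{p'}}$ for large $m$, we get $\ol{\BB P}_{\BB v}^M[X_{2n}^M=\BB v]\geq\tfrac{1}{n(\log n)^{p'+1}}$ for all large $n$ on an event of probability $1-O_n((\log n)^{-\alpha'})$; the finitely many remaining small values of $n$ are absorbed by enlarging $p$. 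This is~\eqref{eqn-return-lower} with $\alpha=\alpha'$ and $p=p'+1$.

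The step I expect to be the main obstacle is precisely the ``without losing a power of $m$'' point in the last paragraph. The naive version of the transfer — bounding the sum $\sum_j\ol{\BB P}_{\BB v}^M[X_j^M=\BB v]w_j$ by its number of non-negligible terms (which is of order $\sqrt m$) times the largest term — would only give a lower bound of order $n^{-3/2}$ up to polylogarithmic factors, which is too weak to force the spectral dimension to equal $2$. The point is that one is summing a \emph{monotone} sequence against weights of total mass at most $1$, so the sum is controlled by the value of the sequence at the left end of the (exponentially well concentrated) support of the weights; this is why both the monotonicity input~\cite[Proposition~10.18]{markov-mixing} and the concentration of $(w_j)$ are essential. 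The remaining ingredients — that $M'$ is a Benjamini--Schramm limit of finite simple planar maps, the decoupling of excursion lengths from the walk on $M$, and the large-deviations bound for $\sum_{j<2n}w_j$ — are routine.
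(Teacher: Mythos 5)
Your proof is correct and reaches the conclusion by a genuinely different route from the paper's, so it is worth recording the differences. Both arguments reduce to Lee's theorem by subdividing edges and then transfer the resulting bound back to $M$, but: (a) the paper subdivides each edge once, producing a bipartite map $\wh M$ with no multiple edges, and then \emph{iterates} the whole argument a second time to dispose of self-loops, whereas you subdivide each edge into a path of length~$3$ and kill both defects at once at the cost of losing bipartiteness; (b) the paper reweights the law of $(M,\BB v)$ by $1+\tfrac12\deg^M(\BB v)$ so that a uniform root of $\wh M$ matches the Benjamini--Schramm limit exactly, while you condition the limit of the uniformly rooted subdivided maps on the root being an original vertex; and (c), most significantly, the paper's transfer step is the clean fact that $X^{\wh M}$ watched at even times is exactly the lazy walk on $M$ (so a Hoeffding bound on the number of non-lazy steps plus the monotonicity of $n\mapsto\ol{\BB P}[X^M_{2n}=\BB v]$ finishes), while your transfer watches $X^{M'}$ at renewal times, decouples $(T_i,N_i)$ from the embedded $M$-walk $Z$, and uses a large-deviations estimate for $N_i$ at the random time with $T_i=2m$ together with the same monotonicity input. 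What the once-subdivision buys the paper (the lazy-walk trick) it pays for by iterating; what your length-$3$ subdivision buys (one pass) it pays for with the more delicate renewal argument, whose crux you correctly identify. Two points deserve more emphasis than you gave them. First, the i.i.d.\ claim for the excursion data rests on the fact that the two interior midpoints of a subdivided edge always form a two-state chain with absorption probability $1/2$ at each step, and this is as true when the original edge is a self-loop (where the subdivision is a triangle, not a path, and an ``advance'' returns to $\BB v$ itself) as when it is not; the phrase ``gambler's ruin on $\{0,1,2,3\}$'' obscures this slightly, and it is worth saying that ``advance'' means ``absorbed through the far midpoint.'' Second, the assertion $\#\mcl E(M_k)\asymp\#\mcl V(M_k)$ does not follow merely from convergence in distribution of $\deg(\rho_k)$ to an exponential-tail limit; one needs uniform integrability, or a good choice of approximating sequence. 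The paper's own argument needs the same control of $\#\mcl V(\wh M_n)/\#\mcl V(M_n)$, so this is a shared subtlety rather than a defect peculiar to your route, but it should be flagged.
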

\begin{proof}
If $M$ has no self-loops or multiple edges, then the lemma is a special case of~\cite[Theorem~1.7]{lee-conformal-growth} or~\cite[Theorem~1.6]{lee-uniformizing}. 
We will now treat the case when $M$ has multiple edges, but no self-loops, by considering the following two perturbations of $(M,\BB v)$:
\begin{enumerate}
\item Let $(M' , \BB v')$ be sampled from the law of $(M,\BB v)$ weighted by $1 + \frac{1}{2} \op{deg}^M(\BB v)$. 
\item Let $\wh M$ be the random planar map obtained from $M'$ by adding a vertex to the middle of each edge, i.e., we replace each edge by two edges which share a vertex. We identify $\mcl V(M')$ with the corresponding subset of $\mcl V(\wh M)$.  With probability $2/(2+\op{deg}^{M'}(\BB v'))$, let $\wh{\BB v} = \BB v'$ and with probability $\op{deg}^{M'}(\BB v')/(2+\op{deg}^{M'}(\BB v'))$, let $\wh{\BB v}$ be sampled uniformly from the set of $\op{deg}^{M'}(\BB v')$ neighbors of $\BB v'$ in $\wh M$.
\end{enumerate}
Since $\op{deg}^M(\BB v)$ has an exponential tail and by H\"older's inequality, it suffices to prove~\eqref{eqn-return-lower} with $(M',\BB v')$ in place of $(M ,\BB v)$. 
Furthermore, since we are assuming that $M$ has no self-loops, the map $\wh M$ has no self-loops or multiple edges. 

We first claim that~\eqref{eqn-return-lower} holds with $(\wh M , \wh{\BB v})$ in place of $(M,\BB v)$. Since $\op{deg}^M(\BB v)$ has an exponential tail, so does $\op{deg}^{\wh M}(\wh{\BB v})$.  By the aforementioned results of~\cite{lee-uniformizing,lee-conformal-growth}, to prove our claim we only need to show that $(\wh M , \wh{\BB v})$ is the Benjamini-Schramm limit of finite random planar maps with a uniform random root vertex.

To this end, let $\{M_n\}_{n\in\BB N}$ be a sequence of finite random planar maps which converge in law to $(M,\BB v)$ in the Benjamini-Schramm topology when equipped with a uniform random root vertex. Also let $\wh M_n$ for $n\in\BB N$ be obtained from $M_n$ by adding a vertex to the middle of each edge as in the definition of $\wh M$.
Let $\wh{\BB v}_n $ be sampled uniformly from $\mcl V(\wh M_n)$. If $\wh{\BB v}_n \in \mcl V(M_n)$, let $\BB v_n'=\wh{\BB v}_n$ and otherwise sample $\BB v_n'$ uniformly from the two neighbors of $\wh{\BB v}_n$. Then for $v\in\mcl V(M_n)$,  
\alb
\BB P\left[\BB v_n' =v \,|\, M_n \right] 
&= \BB P\left[  \wh{\BB v}_n =v \,|\, M_n \right] + \frac12 \BB P\left[\text{$\wh{\BB v}_n \sim v$ in $\wh M_n$} \,|\, M_n \right] \notag\\
&= \frac{1}{\#\mcl V(\wh M_n)  } \left( 1 + \frac12 \op{deg}^{M_n}(v) \right) .
\ale
Therefore, $(M_n ,\BB v_n') \rta (M',\BB v')$ in law with respect to the Benjamini-Schramm topology.
On the other hand, by Bayes' rule the conditional law of $\wh{\BB v}_n$ given $(M_n,\BB v_n')$ can be recovered by setting $\wh{\BB v}_n = \BB v_n'$ with probability $2/(2  + \op{deg}^{M_n}(\BB v_n') )$ and sampling $\wh{\BB v}_n'$ uniformly from the $\op{deg}^{M_n}(\BB v_n')$ neighbors of $\BB v_n'$ in $\wh M_n$ with probability $\op{deg}^{M_n}(\BB v_n')/(2 + \op{deg}^{M_n}(\BB v_n') )$. Therefore $(\wh M_n , \wh{\BB v}_n) \rta (\wh M , \wh{\BB v})$ in law with respect to the Benjamini-Schramm topology, as required.

It is easy to see that~\eqref{eqn-return-lower} for $(\wh M ,\wh{\BB v})$ implies the analogous estimate with $(\wh M , \BB v')$ in place of $(\wh M ,\wh{\BB v})$. 
If $X^{\wh M}$ is a simple random walk on $\wh M$ started from $\BB v'$ and we set 
\eqbn
\wh X^{M'}_m := X^{\wh M}_{2m} , \quad \forall m\in\BB N
\eqen
 then $\wh X^{M'}$ is a lazy random walk on $M'$. That is, $\wh X^{M'}$ takes a uniform nearest-neighbor step with probability $1/2$ or stays put with probability $1/2$. 
 
For $m\in\BB N$, let $\frk t(m)$ be the number of non-stationary steps for $\wh X^{M'}$ before time $m$. 
Also let $\frk j(n)$ for $n\geq 0$ be the time of the $n$th non-stationary step for $\wh X^{M'}$, so that $X^{M'}_n := \wh X^{M'}_{\frk j(n)}$ is a simple random walk on $M'$ and $X^{M'}$ is independent from $m\mapsto \frk t(m)$.

By Hoeffding's inequality for Bernoulli sums,  
\eqb \label{eqn-use-hoeff}
\ol{\BB P}_{\BB v'}^{M'}\left[ (1/2-\ep)m  \leq  \frk t(m) \leq  (1/2+\ep)m  \right] \geq 1 - 2 e^{-2\ep^2 m} ,\quad \forall m\in\BB N ,\quad \forall \ep  > 0. 
\eqe 
By~\eqref{eqn-return-lower} for $(\wh M, \BB v')$, for an appropriate constant $c>0$ (depending only on the law of $(M,\BB v)$) it holds with probability $1-O_m((\log m)^{-\alpha})$ that 
\allb \label{eqn-return-calc}
\frac{c}{m (\log m)^p} 
&\leq \ol{\BB P}_{\BB v'}^{M'}\left[ \wh X_{5m}^{M'} = \BB v' \right] \notag \\
&= \sum_{n = 2m}^{ 3m} \ol{\BB P}_{\BB v'}^{M'}\left[ \wh X_{5m}^{M'} = \BB v' ,\, \frk t(5m) = n \right]  + 2 e^{-c m} \quad\text{(by~\eqref{eqn-use-hoeff})}  \notag\\
&=  \sum_{n = 2m}^{3 m} \ol{\BB P}_{\BB v'}^{M'}\left[ \wh X_{\frk j(n)}^{M'} = \BB v' \,|\, \frk t(5m) = n \right] \BB P\left[ \frk t(5m) = n \right]  + 2 e^{-c m}  \notag \\
&= \sum_{n = 2m}^{3 m}   \ol{\BB P}_{\BB v'}^{M'}\left[   X_{n}^{M'} = \BB v'  \right] \BB P\left[ \frk t(5m) = n \right]   + 2 e^{-c m} \notag\\
&\qquad\qquad \qquad\qquad \text{(by independence of $X^{M'}$ and $\frk t$)} \notag \\
&\leq \max_{n \in [2m,3m]_{\BB Z}} \ol{\BB P}_{\BB v'}^{M'}\left[   X_{n}^{M'} = \BB v'    \right]   + 2 e^{-c m} .
\alle
By~\cite[Proposition 10.18]{markov-mixing}, one has
\[
\ol{\BB P}_{\BB v'}^{M'}\left[   X_{2n+2}^{M'} = \BB v'    \right]  \leq  \ol{\BB P}_{\BB v'}^{M'}\left[   X_{2n }^{M'} = \BB v'    \right], \quad \forall n\in\BB N.
\]
In fact, exactly the same proof shows that 
\[ 
\ol{\BB P}_{\BB v'}^{M'}\left[   X_{2n+1}^{M'} = \BB v'    \right]  \leq  \ol{\BB P}_{\BB v'}^{M'}\left[   X_{2n }^{M'} = \BB v'    \right], \quad\forall n \in \BB N.
\]
Consequently, the right side of~\eqref{eqn-return-calc} is at most $\ol{\BB P}_{\BB v'}^{M'}\left[   X_{2m}^{M'} = \BB v'    \right]+ 2 e^{-c m}  $, so~\eqref{eqn-return-lower} holds with $(M',\BB v')$ in place of $(M,\BB v)$ (with a slightly smaller choice of $p$). This completes the proof in the case when $M$ has multiple edges, but no self-loops.

If $M$ is allowed to have self-loops and multiple edges, then the map $\wh M$ defined above has multiple edges but not self-loops. Hence we can repeat the above argument verbatim to deduce the general case from the case of maps with no self-loops.
\end{proof}

\section{Index of notation}
\label{sec-index}

Here we record some commonly used symbols in the paper, along with their meaning and the location where they are first defined. Other symbols not listed here are only used locally. 

\begin{multicols}{2}
\begin{itemize}
\item $\gamma$: LQG parameter; Section~\ref{sec-overview}.
\item $Z = (L,R)$: correlated Brownian motion used to construct $\mcl G $; \eqref{eqn-bm-cov}.
\item $\mcl G$: mated-CRT map; \eqref{eqn-inf-adjacency}. 
\item $X^M$: random walk on $M$; Definition~\ref{def-rw-law}.
\item $\ol{\BB P}_v^M$: conditional law given $\mcl G $ of $X^M$ started from $v \in \mcl V(M)$; Definition~\ref{def-rw-law}.
\item $\op{dist}^M(\cdot,\cdot)$; graph distance on $M$; Definition~\ref{def-metric-ball}.
\item $\mcl B_r^M(\cdot)$; graph metric ball of radius $r$; Definition~\ref{def-metric-ball}.
\item $\op{Gr}_n^M(\cdot,\cdot)$; Green's function of $X^M$ stopped at time $n$; Section~\ref{sec-main-results}.
\item $\sigma_r$: hitting time of $\bdy \mcl B_r^M(\BB v)$; \eqref{eqn-exit-time-def}.
\item $\mcl V(G)$ and $\mcl E(G)$: vertex and edge sets of graph $G$; Section \ref{sec-basic-notation}.
\item $B_r(z)$: Euclidean ball of radius $r$ and center $z$ ($B_r = B_r(0)$); Section~\ref{sec-basic-notation}.
\item $\op{deg}^G(\cdot)$ degree of a vertex; Section \ref{sec-basic-notation}.
\item $\op{Energy}(\cdot ; \cdot)$: discrete Dirichlet energy; Definition~\ref{def-discrete-dirichlet}.
\item $\mcl R^M(\cdot \leftrightarrow \cdot)$: effective resistance; \eqref{eqn-eff-res-def}.
\item $Q= 2/\gamma + \gamma/2$: LQG coordinate change constant; \eqref{eqn-lqg-coord}. 
\item $h$: $\gamma$-quantum cone; Section \ref{sec-lqg-prelim}.
\item $\mu_h$: $\gamma$-LQG area measure; Section \ref{sec-lqg-prelim}. 
\item $\nu_h$: $\gamma$-LQG boundary measure; Section \ref{sec-lqg-prelim}.
\item $h_r(z)$: circle average of $h$ over $\bdy B_r(z)$;~\cite[Section 3.1]{shef-kpz}. 
\item $R_b$: largest radius with $h_r(\BB z) + Q\log r = \frac{1}{\gamma}\log b$; \eqref{eqn-mass-hit-time}.
\item $\kappa$: $16/\gamma^2$, SLE parameter; Section~\ref{sec-peanosphere}.
\item $\eta$: space-filling SLE$_\kappa$; Section \ref{sec-peanosphere}.   
\item $\mcl G^\ep$: mated-CRT map with cell size $\ep$; \eqref{eqn-inf-adjacency-ep}.
\item $\mcl G^\ep(D)$: subgraph of $\mcl G^\ep$ corresponding to domain $D\subset \BB C$; \eqref{eqn-sg-domain}.
\item $\BB A_{a,b}(z)$: the open annulus $B_b(z) \setminus \ol{B_a(z)}$ ($\BB A_{a,b} = \BB A_{a,b}(0)$); \eqref{eqn-annulus-def}.
\item $M_n$ and $\mcl G_n$: graphs corresponding to the time interval $[-n,n]_{\BB Z}$; Section~\ref{sec-coupling}. 
\item $\phi_n$ and $\psi_n$: rough isometry functions $M_n\rta\mcl G_n$ and $\mcl G_n\rta M_n$; \eqref{eqn-peano-functions}.
\end{itemize}
\end{multicols}

\bibliography{cibiblong,cibib}
\bibliographystyle{hmralphaabbrv}

\end{document}